\documentclass[10pt,a4paper]{article}

\usepackage{anysize}
\marginsize{3cm}{3cm}{2cm}{2cm}

\usepackage{amsmath,amssymb,amsthm}

\newcommand{\R}{\mathbb{R}}

\newcommand{\N}{\mathbb{N}}
\newcommand{\Z}{\mathbb{Z}}
\renewcommand{\S}{\mathcal{S}}
\newcommand{\F}{\mathcal{F}}

\renewcommand{\i}{\mathbf{i}}

\theoremstyle{plain}
\newtheorem{thm}{Theorem}[section]
\newtheorem{lma}[thm]{Lemma}
\newtheorem{cor}[thm]{Corollary}
\newtheorem{prop}[thm]{Proposition}

\theoremstyle{definition}
\newtheorem{df}[thm]{Definition}

\theoremstyle{remark}
\newtheorem*{rem}{Remark}
\theoremstyle{remark}

\newcommand{\supp}{\mathop{\mathrm{supp}}}
 \newcommand{\sgn}{\mathop{\mathrm{sgn}}}
 
\title{A Non-analytic Superposition Result on Gevrey-modulation Spaces}
\date{\today}

\author{Maximilian Reich \\
\small Fakult\"at f\"ur Mathematik und Informatik \\[-0.8ex]
\small TU Bergakademie Freiberg, Germany\\
\small \texttt{maximilian.reich@math.tu-freiberg.de}\\
}

\begin{document}
	
	\maketitle
	
	\begin{abstract}
		After defining classical weighted modulation spaces we show some basic properties. In this work we additionally choose an approach in terms of the frequency-uniform decomposition and a discussion on the weights of modulation spaces leads to a definition of Gevrey-modulation spaces, where we leave the Sobolev frame and proceed to the Gevrey frame in order to get better results. We prove that Gevrey-modulation spaces are algebras under multiplication. Moreover, we obtain a non-analytic superposition result which gives rise to discuss the possibility to apply Gevrey-modulation spaces to non-linear partial differential equations. 
	\end{abstract}

	\newpage
	
	\begin{flushleft}
		\textbf{\Large{Acknowledgment}} \\[8pt]
	\end{flushleft}
		I gratefully acknowledge the constructive discussions on the field of modulation spaces with Professor Toft from Linnaeus University V\"axj\"o in Sweden. Moreover, I would like to give a special thanks to my supervisor Professor Reissig for his friendly and patient support, his advice and explanations were always very helpful. Additionally I want to express my gratitude to Professor Sickel from Friedrich-Schiller University Jena who gave essential ideas to some results in this work.
	
	\newpage

	\tableofcontents
	\newpage
	
	\section{Introduction}
	
		\subsection{Motivation}
			In signal analysis the goal is to determine the frequency spectrum of a signal at a time $x$. Therefore a signal is recorded over a short period $I=[x-\delta,x]$. This signal is referred to as $f_{inst}$ and is defined by $f_{inst} = f \cdot \chi_I$, where $\chi_I$ is the characteristic function of the interval $I=[x-\delta,x]$. Hence, the frequency spectrum $\hat{f}_{inst}$ is of interest. But it is well-known that an ideal resolution of the frequency at a time $x$ is not possible in general. This obstacle can be reasoned by so-called uncertainty principles. The theory is presented in \cite{groechenig}. Mathematically this means that the support of $\hat{f}_{inst}$ cannot be small. Thus, it is impossible to obtain an instantaneous frequency and it does not make sense to speak of a specific frequency at a time $x$. But of course we want to get a reasonable and fruitful way to determine time-frequency information of a signal which introduces us to the field of time-frequency analysis. At this point a natural link to quantum mechanics can be mentioned. Instead of time and frequency we speak of position and momentum, respectively. Similar arguments yield that we can only find a probability distribution of the position $x$ together with its momentum $\omega$. Hence, it is impossible to assign simultaneously a particle's exact position $x$ to its exact momentum $\omega$ with arbitrary precision. In fact that is precisely what the well-known Heisenberg's uncertainty principle states. \\
			The mathematical model behind the previous considerations is explicitly explained in \cite{groechenig}. We will only give a rough sketch in the following. In \cite{groechenig} it is explained in more detail. Let $f$ be a function. Then its properties are completely obtained by the values $f(x)$ for all $x\in\R^n$. We even get the properties of its Fourier transform $\hat{f}$ since the Fourier transform is one-to-one. But an essential problem arises from the following fact: If $f \in L^p(\R^n)$ is an element of the standard space of Lebesgue integrable functions $L^p$, then we cannot say the same for $\hat{f}$, i.e., we do not know if $\hat{f}\in L^p(\R^n)$ for $p\neq 2$. Thus, the functions $f$ and its Fourier transform $\hat{f}$ are two different representations of the same object but indeed they show different properties of this object. Consequently we search for representations which combine $f$ and $\hat{f}$ and its different features. \\
			The most common joint time-frequency representation is the so-called short-time Fourier transform which already turned out to be rather fruitful, for instance compare with the theory in \cite{groechenig}. It reveals information about local properties of the function $f$. In terms of the discussion above it particularly gives information about the ``local frequency spectrum''. \\
			The next goal consists of finding a family of Banach spaces that are defined by means of the global behavior of certain local properties. Taking the previous considerations into account we want to find a space which controls globally the short-time Fourier transform. This directly leads to modulation spaces which got introduced by Feichtinger in the beginning of 1980s. For more details we refer to \cite{feichtingerHistory}. But his original approach actually based on another idea. Feichtinger realized that modulation spaces basically correspond to so-called Wiener amalgam spaces on the Fourier transform side (see \cite{feichtingerWiener}, \cite{feichtingerBanach}, \cite{feichtingerGroup}). In fact modulation spaces and their Fourier transforms are of the same structure. That points out a big advantage when treating those spaces. Moreover, for spaces of Wiener type there existed already results which have been carried over to modulation spaces in some sense. At this point we only mention properties like duality, multiplier estimates and interpolation methods but there are more statements. \\
			Nowadays modulation spaces are of great interest in time-frequency analysis because of their large number of applications, for instance the modeling of wireless channels, the analysis of linear operators and so on. Also in the theory of pseudo-differential operators modulation spaces are applicable (see \cite{toft}, \cite{toftPseudo}, \cite{toftCubo}). \\
			However our main goal in the future will be to apply modulation spaces to partial differential equations. In this work we will establish a very basic linear result but our main focus is on an introduction to the theory of modulation spaces and eventually the preparation of some tools in order to be able to treat non-linear problems as well. Some results on this field are already existing (see \cite{iwabuchi}, \cite{wang}, \cite{wangNonlinear}, \cite{ruzhansky}).
			
		\subsection{The Short-time Fourier Transform (STFT)}
			\subsubsection{Definition}
			First of all we define an appropriate joint time-frequency representation.
			\begin{df} \label{STFT}
				Let $\phi\neq 0$ be a fixed function, the so-called \emph{window function}. Then the \emph{short-time Fourier transform} (STFT) of a function $f$ with respect to $\phi$ is defined as
				\[ V_\phi f(x,\xi) = (2\pi)^{-\frac{n}{2}} \int_{\R^n} f(s) \overline{\phi(s-x)} e^{-\i s\cdot \xi} ds \qquad (x,\xi \in \R^n). \]
			\end{df}
			It is needed to choose sufficiently smooth window functions to avoid artificial discontinuities of the corresponding STFT $V_\phi f$. What this means for $\phi$ in particular will be shown later on. The window function reveals local properties of the function $f$, that is, we just Fourier transform the function $f$ restricted on an interval, the so-called \emph{window}, determined by the window function $\phi$. Hence we obtain local information about frequency properties of $f$. \\
	Let $\phi$ now be supported on a compact set centered in the origin. Then $V_\phi f(x,\cdot)$ is the Fourier transform of the function $f$ in a neighborhood of $x$. This window can be shifted by choosing different values for $x$. That is why in \cite{groechenig} the STFT is also called the "sliding window Fourier transform". Moreover, the STFT is linear in $f$ and conjugate-linear in $\phi$. In the definition \ref{STFT} we fixed the window function $\phi$, so that the short-time Fourier transform $V_\phi f$ becomes a linear mapping from functions on $\R^n$ to functions on $\R^{2n}$. But obviously $V_\phi f$ also depends essentially on $\phi$. Because of this fact we assumed $\phi$ to be sufficiently smooth. \\
	
			\subsubsection{Function Spaces for STFT}
			We want to investigate more precisely which function spaces for $f$ and $\phi$ are eventually appropriate to define the short-time Fourier transform $V_\phi f$. Therefore we firstly introduce two operators. For $x,\xi\in \R^n$ we define respectively the translation operator $T_x$ and modulation operator $M_\xi$ by
	\[ T_x f(t) = f(t-x) \]
	and 
	\[ M_\xi f(t) = e^{\i \xi\cdot t} f(t). \]
	Naturally we can define the product between both operators $T_x$ and $M_\xi$. The operators we obtain are so-called \emph{time-frequency shifts} $M_\xi T_x$ and $T_x M_\xi$, respectively. \\
	By basic properties of the introduced operators and some straightforward computations we can easily prove the subsequent lemma which gives equal representations of the short-time Fourier transform $V_\phi f$ of a function $f$. Remark that the involution of a function $\phi$ is defined as $\phi^*(x) = \overline{\phi(-x)}$. Furthermore we recall the \emph{Fourier transform} which is defined by 
	\[ \mathcal{F}f(\xi) = \hat{f}(\xi) = (2\pi)^{-\frac{n}{2}}\int_{\R^n} f(x) e^{-\i x\cdot \xi} dx \qquad (x,\xi \in \R^n) \]
	for admissible functions $f$. The \emph{inverse Fourier transform} is defined by
	\[ \mathcal{F}^{-1}\hat{f}(x) = f(x) = (2\pi)^{-\frac{n}{2}}\int_{\R^n} \hat{f}(\xi) e^{\i x\cdot \xi} d\xi. \]
		\begin{lma} \label{STFTrepresent}
			Let $f$ and $\phi$ be admissible functions. Then $V_\phi f$ is uniformly continuous on $\R^{2n}$ and
			\begin{eqnarray}
				\label{eq1}
				V_\phi f(x,\xi) & = & \mathcal{F} (f\cdot T_x \bar{\phi}) (\xi) \\
				\label{eq2}
				& = & (2\pi)^{-\frac{n}{2}} ( f, M_\xi T_x \phi )_{L^2} \\
				\label{eq3}
				& = & (2\pi)^{-\frac{n}{2}} ( \hat{f}, T_\xi M_{-x} \hat{\phi} )_{L^2} \\
				\label{eq4}
				& = & e^{-\i x\cdot \xi} \mathcal{F} (\hat{f}\cdot T_\xi \bar{\hat{\phi}}) (-x) \\
				\label{eq5}
				& = & e^{-\i x\cdot \xi} V_{\hat{\phi}} \hat{f} (\xi, -x) \\
				\label{eq6}
				& = & (2\pi)^{-\frac{n}{2}} e^{-\i x\cdot \xi} (f \ast M_\xi \phi^*) (x) \\
				\label{eq7}
				& = & (2\pi)^{-\frac{n}{2}} (\hat{f}\ast M_{-x}\hat{\phi}^*)(\xi)
			\end{eqnarray}
		\end{lma}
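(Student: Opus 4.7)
The proof is essentially a bookkeeping exercise on the defining integral, systematically applying the basic intertwining rules for translation, modulation, Fourier transform, involution, and convolution. My plan is to establish the seven identities in the order presented, since each builds on the previous one, and then treat uniform continuity separately as a density/approximation argument.

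First I would observe that $\overline{\phi(s-x)} = T_x\bar\phi(s)$, so the defining integral is literally the Fourier transform of the product $f\cdot T_x\bar\phi$ evaluated at $\xi$; this gives \eqref{eq1} with no computation. For \eqref{eq2} I would rewrite the complex exponential together with the window as $\overline{e^{\i s\cdot\xi}\phi(s-x)} = \overline{M_\xi T_x\phi(s)}$, so the integral becomes a scalar product in $L^2$ up to the constant $(2\pi)^{-n/2}$. Identity \eqref{eq3} then follows from \eqref{eq2} by Parseval's formula together with the two elementary intertwining relations $\mathcal{F}(T_x g) = M_{-x}\hat g$ and $\mathcal{F}(M_\xi g) = T_\xi \hat g$, which combine to give $\mathcal{F}(M_\xi T_x\phi) = T_\xi M_{-x}\hat\phi$.

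For \eqref{eq4} and \eqref{eq5} the idea is to transfer the representation to the Fourier side. Expanding $T_\xi M_{-x}\hat\phi(\eta)$ in \eqref{eq3} produces the phase $e^{-\i x\cdot\xi}$ explicitly from $M_{-x}$ and leaves an integral of the form $\int \hat f(\eta)\overline{\hat\phi(\eta-\xi)}e^{\i x\cdot\eta}\,d\eta$; this is exactly $V_{\hat\phi}\hat f(\xi,-x)$ by the very definition of the STFT, giving \eqref{eq5}, and \eqref{eq4} is then \eqref{eq5} combined with \eqref{eq1} applied to the pair $(\hat f,\hat\phi)$. For \eqref{eq6} I would introduce the involution $\phi^*(t)=\overline{\phi(-t)}$ so that $M_\xi\phi^*(x-s) = e^{\i\xi\cdot(x-s)}\overline{\phi(s-x)}$; pulling out $e^{\i x\cdot\xi}$ rewrites the defining integral as a convolution, which after multiplying by $e^{-\i x\cdot\xi}$ yields the claim. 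Identity \eqref{eq7} follows by the same convolution trick applied to \eqref{eq3}, or equivalently by applying \eqref{eq6} to $(\hat f,\hat\phi)$ via \eqref{eq5}.

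The only genuinely analytic point, and hence what I expect to be the main obstacle, is uniform continuity of $V_\phi f$ on $\R^{2n}$; the rest is purely formal. I would attack it via \eqref{eq2}, estimating
\[ |V_\phi f(x,\xi) - V_\phi f(x',\xi')| \le (2\pi)^{-n/2}\|f\|_{L^2}\,\|M_\xi T_x\phi - M_{\xi'}T_{x'}\phi\|_{L^2}, \]
so the task reduces to showing that $(x,\xi)\mapsto M_\xi T_x\phi$ is uniformly continuous from $\R^{2n}$ into $L^2(\R^n)$. This is a standard consequence of the strong continuity of the translation and modulation groups on $L^2$, together with a density argument: one first verifies it for $\phi$ in a dense subclass (say Schwartz functions, where the bound is explicit by the mean value theorem) and then extends by approximation using that both groups act isometrically. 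The statement for general admissible $f,\phi$ then follows, provided ``admissible'' is understood so that the pairing and the Fourier identities above are meaningful.
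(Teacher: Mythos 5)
Your treatment of the seven identities is correct and is the standard bookkeeping one would expect; the paper itself gives no argument here (it only cites Lemma 5.2 of \cite{reich}), so there is nothing to compare on that part. The algebraic steps --- reading off \eqref{eq1} from the definition, the intertwining relations $\mathcal{F}T_x=M_{-x}\mathcal{F}$ and $\mathcal{F}M_\xi=T_\xi\mathcal{F}$ for \eqref{eq3}, expanding $T_\xi M_{-x}\hat\phi$ to extract the phase for \eqref{eq4}--\eqref{eq5}, and the involution/convolution rewrites for \eqref{eq6}--\eqref{eq7} --- all check out with the paper's normalization of $\mathcal{F}$.

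The gap is in the uniform continuity argument. After Cauchy--Schwarz you reduce the problem to showing that $(x,\xi)\mapsto M_\xi T_x\phi$ is \emph{uniformly} continuous from $\R^{2n}$ into $L^2(\R^n)$, and you assert this follows from strong continuity of the translation and modulation groups plus density. That intermediate claim is false, even for Schwartz $\phi$: for $n=1$ and any $\epsilon\neq 0$,
\[
\|M_\epsilon T_x\phi-T_x\phi\|_{L^2}^2=\int 4\sin^2\!\Big(\tfrac{\epsilon(u+x)}{2}\Big)|\phi(u)|^2\,du ,
\]
and choosing $x=\pi/\epsilon$ makes the integrand $\geq 2|\phi(u)|^2$ on $|u|\leq 1$ for small $\epsilon$, so $\sup_x\|M_\epsilon T_x\phi-T_x\phi\|_{L^2}$ stays bounded away from $0$ as $\epsilon\to 0$. (Equivalently, your mean value theorem bound involves $\|t\,\phi(t-x)\|_{L^2}$, which grows like $|x|\,\|\phi\|_{L^2}$ and is not uniform in $x$.) Strong continuity of the two unitary groups gives continuity, even local uniform continuity, of $V_\phi f$, but not global uniform continuity by this route: Cauchy--Schwarz discards the fixed $f$ and thereby demands the operator-norm statement, which fails. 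The translation increment is harmless ($\|(T_x-T_{x'})\phi\|_{L^2}$ depends only on $x-x'$); it is the modulation increment at large $|x|$ that breaks. A correct repair is to prove instead that $V_\phi f\in C_0(\R^{2n})$: for $f,\phi\in\S(\R^n)$ this is Proposition \ref{STFTdecay}, and the bilinear bound $\|V_\phi f\|_{L^\infty}\leq(2\pi)^{-n/2}\|f\|_{L^2}\|\phi\|_{L^2}$ (from \eqref{eq2}) lets you pass to general $f,\phi\in L^2$ by uniform approximation, since $C_0$ is closed under uniform limits and its elements are uniformly continuous. With that replacement the proof is complete.
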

		\begin{proof}
			Cf. Lemma 5.2 in \cite{reich}.
		\end{proof}	
		So far we assumed admissible functions when we considered the short-time Fourier transform. What this in particular means can be basically deduced by Lemma \ref{STFTrepresent}. Due to equality (\ref{eq1}) together with H\"older's inequality the short-time Fourier transform $V_\phi f$ exists pointwise for $f\in L^p(\R^n)$ and $\phi\in L^{p'}(\R^n)$ with $\frac{1}{p}+ \frac{1}{p'} = 1$. A more general existence result can be obtained by equality (\ref{eq2}). If $B$ is a Banach space, then we know by chapter 4 in \cite{rudin} that the dual space $B^*$ exists and is also a Banach space. Additionally we assume that $B$ is invariant under time-frequency shifts. Hence, the expression $( \cdot, \cdot )_{L^2}$ is well defined by duality. Therefore $V_\phi f$ exists for $f\in B, \phi\in B^*$ and for $f\in B^*, \phi \in B$, respectively. Summarizing we can also define the short-time Fourier transform of distributions by taking corresponding test functions as windows. \\
		The following results are proved in \cite{groechenig}. 
		\begin{prop} \label{STFTtemp}
			For a fixed window function $\phi\in\S(\R^n)\setminus \{0\}$ and for $f\in\S'(\R^n)$ the STFT $V_\phi f$ is both defined and continuous on 		$\S'(\R^{2n})$.
		\end{prop}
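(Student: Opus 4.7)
The plan is to interpret the $L^2$ inner product in Lemma \ref{STFTrepresent}, equation (\ref{eq2}), as the duality pairing between $\S'(\R^n)$ and $\S(\R^n)$. For a fixed Schwartz window $\phi$ and any $(x,\xi) \in \R^{2n}$, the time-frequency shift $M_\xi T_x \phi$ again lies in $\S(\R^n)$, so
\[
V_\phi f(x,\xi) \;=\; (2\pi)^{-n/2}\,\langle f,\,\overline{M_\xi T_x \phi}\rangle
\]
is well defined as a scalar for every $f \in \S'(\R^n)$. This immediately takes care of the ``defined'' part.

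Next, I would establish pointwise continuity of $(x,\xi) \mapsto V_\phi f(x,\xi)$. The key intermediate step is to show that the map $(x,\xi) \mapsto M_\xi T_x \phi$ is continuous from $\R^{2n}$ into $\S(\R^n)$ equipped with its natural seminorm topology. This reduces to a dominated convergence argument on each seminorm $\sup_t |t^\alpha \partial_t^\beta[e^{\i \xi\cdot t}\phi(t-x)]|$, which is routine thanks to the Schwartz decay of $\phi$. Since tempered distributions are by definition continuous on $\S(\R^n)$, composing with $f$ yields continuity of $V_\phi f$ on $\R^{2n}$.

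To further place $V_\phi f$ in $\S'(\R^{2n})$, I would derive a polynomial growth bound. Using the standard seminorm estimate
\[
|\langle f,\psi\rangle| \leq C \sum_{|\alpha|,|\beta| \leq N} \sup_{t \in \R^n} |t^\alpha \partial_t^\beta \psi(t)|
\]
valid for some $N$ depending on $f$, applied with $\psi = \overline{M_\xi T_x \phi}$, and expanding $\partial_t^\beta$ by the Leibniz rule, the derivatives either fall on $e^{-\i\xi\cdot t}$ (producing factors of $\xi$) or on $\overline{\phi(t-x)}$, while rewriting $t^\alpha = ((t-x)+x)^\alpha$ produces at most a polynomial factor $(1+|x|)^{|\alpha|}$. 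The result is an estimate of the form $|V_\phi f(x,\xi)| \leq C(1+|x|+|\xi|)^N$, which together with continuity yields $V_\phi f \in \S'(\R^{2n})$. Finally, for continuity of the linear map $f \mapsto V_\phi f$ from $\S'(\R^n)$ to $\S'(\R^{2n})$ in the weak-$*$ topologies, I would observe that $f_k \to f$ in $\S'(\R^n)$ forces pointwise convergence $V_\phi f_k(x,\xi) \to V_\phi f(x,\xi)$ under the same uniform polynomial bound, so dominated convergence lets me pass to the limit when testing against any $\psi \in \S(\R^{2n})$.

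The main obstacle is the bookkeeping in the polynomial growth estimate: carefully tracking the $x$- and $\xi$-dependence introduced by translation and modulation, and extracting a clean bound that involves only finitely many Schwartz seminorms of $\phi$ with explicit powers of $(1+|x|+|\xi|)$. Once this estimate is in place, the continuity statements are essentially formal consequences of the continuity of the action of $\S'$ on $\S$.
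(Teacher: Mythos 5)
Your argument is correct and is essentially the standard proof of this fact: the paper itself gives no proof but defers to \cite{groechenig}, and your route (duality pairing with $M_\xi T_x\phi\in\S(\R^n)$, continuity of $(x,\xi)\mapsto M_\xi T_x\phi$ into $\S(\R^n)$, and the finite-seminorm estimate yielding the polynomial bound $|V_\phi f(x,\xi)|\leq C(1+|x|+|\xi|)^N$) is precisely the argument used there. The only step you pass over silently is that the uniform-in-$k$ polynomial bound for a convergent sequence $f_k\to f$ in $\S'(\R^n)$ requires equicontinuity of the family $\{f_k\}$ (Banach--Steinhaus in the Fr\'echet space $\S(\R^n)$), which is standard but worth stating.
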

		\begin{prop} \label{STFTdecay}
			Let $\phi\in \S(\R^n)$ be a fixed window function. If $f\in \S(\R^n)$, then $V_\phi f\in \S(\R^{2n})$. In particular for all $N\geq 0$ there exists a constant $C_N >0$ such that
			\[ |V_\phi f (x,\xi)| \leq C_N (1+|x|+|\xi|)^{-N}. \]
		\end{prop}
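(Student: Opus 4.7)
The plan is to obtain decay in $\xi$ from (\ref{eq1}), decay in $x$ from (\ref{eq5}) applied on the Fourier side, and to combine them into joint decay; smoothness comes from differentiation under the integral in Definition \ref{STFT}. Indeed, $\partial_{x_j}$ hitting $\overline{\phi(s-x)}$ produces $-\overline{(\partial_j\phi)(s-x)}$, while $\partial_{\xi_j}$ brings down a factor $-\i s_j$; iterating, every mixed derivative $\partial_x^\alpha\partial_\xi^\beta V_\phi f$ is, up to a sign, an STFT $V_{\partial^\alpha\phi}(s^\beta f)$ of a Schwartz function with a Schwartz window. Hence, once the pointwise bound is proved, applying it to each of these derivatives upgrades it to $V_\phi f\in\S(\R^{2n})$.

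For decay in $\xi$ uniformly in $x$, I would start from (\ref{eq1}) together with the identity $\xi^\gamma\hat g(\xi)=(-\i)^{|\gamma|}\mathcal F(\partial^\gamma g)(\xi)$, giving
\[ |\xi^\gamma V_\phi f(x,\xi)|\leq (2\pi)^{-n/2}\|\partial_s^\gamma(f\cdot T_x\bar\phi)\|_{L^1}. \]
Expanding by Leibniz and applying Cauchy--Schwarz, each term $\int_{\R^n}|(\partial^{\gamma_1}f)(s)(\partial^{\gamma_2}\phi)(s-x)|\,ds$ is bounded by $\|\partial^{\gamma_1}f\|_{L^2}\|\partial^{\gamma_2}\phi\|_{L^2}$, a constant independent of $x$ because translation is an $L^2$-isometry. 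Summing over $|\gamma|\leq N$ and using $(1+|\xi|)^N\leq C_N\sum_{|\gamma|\leq N}|\xi^{\gamma}|$ yields $|V_\phi f(x,\xi)|\leq C_N(1+|\xi|)^{-N}$ uniformly in $x\in\R^n$.

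Now (\ref{eq5}) gives $|V_\phi f(x,\xi)|=|V_{\hat\phi}\hat f(\xi,-x)|$, and since $\hat f,\hat\phi\in\S(\R^n)$, the previous step applied to the STFT $V_{\hat\phi}\hat f$ produces $|V_{\hat\phi}\hat f(\xi,-x)|\leq C_N(1+|x|)^{-N}$ uniformly in $\xi$. Combining both bounds,
\[ |V_\phi f(x,\xi)|\leq C_N\min\bigl\{(1+|x|)^{-N},(1+|\xi|)^{-N}\bigr\}=C_N(1+\max\{|x|,|\xi|\})^{-N}\leq 2^N C_N(1+|x|+|\xi|)^{-N}, \]
which is the claim. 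The main (essentially bookkeeping) difficulty is tracking uniformity in $x$ through the Leibniz expansion, while the one conceptual ingredient is the observation that (\ref{eq5}) interchanges the roles of time and frequency, so proving decay in $\xi$ for a general Schwartz pair $(f,\phi)$ automatically gives decay in $x$ for the original pair.
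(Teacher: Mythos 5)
Your argument is correct. Note that the paper itself does not prove this proposition; it only refers to Gr\"ochenig's book, so there is no in-paper proof to match. Your two building blocks are both sound: from (\ref{eq1}) the identity $\xi^\gamma \mathcal{F}g = (-\i)^{|\gamma|}\mathcal{F}(\partial^\gamma g)$ plus Leibniz and Cauchy--Schwarz gives decay in $\xi$ with constants independent of $x$ (translation invariance of the $L^2$-norm is exactly the right reason for the uniformity), and (\ref{eq5}) applied to the Schwartz pair $(\hat f,\hat\phi)$ converts that into decay in $x$ uniformly in $\xi$; the elementary inequality $1+|x|+|\xi|\leq 2(1+\max\{|x|,|\xi|\})$ then merges the two bounds. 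The reduction of $\partial_x^\alpha\partial_\xi^\beta V_\phi f$ to $\pm V_{\partial^\alpha\phi}(s^\beta f)$ is also correct and closes the Schwartz-class claim, since the pointwise bound then applies verbatim to every derivative (one should say a word about differentiating under the integral, but for Schwartz integrands this is routine dominated convergence). For comparison, the proof in the cited reference is structurally different: it writes $V_\phi f$ as a partial Fourier transform of the function $(x,s)\mapsto f(s)\overline{\phi(s-x)}$, i.e.\ of $f\otimes\bar\phi$ composed with an invertible linear change of variables on $\R^{2n}$, and concludes from the fact that both operations preserve $\S(\R^{2n})$. That route is shorter and gives membership in $\S(\R^{2n})$ in one stroke, with the pointwise decay as a corollary; yours is more hands-on, proves the decay estimate first and upgrades it to Schwartz-class membership afterwards, and has the pedagogical merit of exhibiting explicitly the time-frequency symmetry encoded in (\ref{eq5}). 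Only cosmetic caveat: the constants in your two partial bounds need not coincide, so take their maximum before combining.
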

		\begin{rem}
			It can be shown that this statement also holds vice versa.
		\end{rem}
		
		\subsubsection{Basic Properties}
		The next proposition gives rise to another interesting question.  
		\begin{prop} \label{orthogonal}
			If $f,\phi \in L^2(\R^n)$, then
			\[ \| V_\phi f \|_{L^2(\R^n)} = \| f \|_{L^2(\R^n)} \| \phi \|_{L^2(\R^n)}. \]
		\end{prop}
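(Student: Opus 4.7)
The plan is to exploit the representation \eqref{eq1} from Lemma \ref{STFTrepresent}, which says that, for each fixed $x$, the function $\xi\mapsto V_\phi f(x,\xi)$ is exactly the Fourier transform of $g_x(s):=f(s)\overline{\phi(s-x)}$. This immediately suggests integrating in $\xi$ first and using Plancherel's theorem to move to the spatial side, after which a simple translation-invariance argument separates the two $L^2$ norms.

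Concretely, I would first assume $f,\phi\in\mathcal{S}(\R^n)$, so that $g_x\in\mathcal{S}(\R^n)$ for every $x$ and all Fourier manipulations are unproblematic. Applying Plancherel in the variable $\xi$ gives
\[
\int_{\R^n}|V_\phi f(x,\xi)|^2\,d\xi \;=\; \int_{\R^n}|g_x(s)|^2\,ds \;=\; \int_{\R^n}|f(s)|^2|\phi(s-x)|^2\,ds.
\]
Then I would integrate in $x$ and invoke Fubini (the integrand is nonnegative, so this is automatic) to obtain
\[
\|V_\phi f\|_{L^2(\R^{2n})}^2 \;=\; \int_{\R^n}|f(s)|^2\Bigl(\int_{\R^n}|\phi(s-x)|^2\,dx\Bigr)\,ds.
\]
By translation invariance of Lebesgue measure the inner integral equals $\|\phi\|_{L^2}^2$ for every $s$, and pulling this constant outside yields $\|f\|_{L^2}^2\|\phi\|_{L^2}^2$. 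Taking square roots gives the claimed identity on the Schwartz class.

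To promote the result from $\mathcal{S}(\R^n)$ to all of $L^2(\R^n)$, I would use the density of $\mathcal{S}$ in $L^2$ together with the bilinearity of $V$ in the pair $(f,\phi)$ (conjugate-linear in $\phi$): approximating $f$ and $\phi$ by Schwartz sequences $f_k\to f$, $\phi_k\to\phi$ in $L^2$, the identity proved above shows that $\{V_{\phi_k}f_k\}$ is Cauchy in $L^2(\R^{2n})$, and on the other hand it converges pointwise (by Cauchy--Schwarz applied to the integrand in Definition \ref{STFT}) to $V_\phi f$. Passing to the limit in the already-established equality yields the statement for general $f,\phi\in L^2$.

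The only genuinely delicate point is the justification of Plancherel's formula in the first step: the product $f\cdot T_x\bar\phi$ lies in $L^1$ for $L^2$ functions by Cauchy--Schwarz but need not sit in $L^2$, so one cannot directly invoke Plancherel at the $L^2$-level without first restricting to Schwartz (or, equivalently, to $L^1\cap L^2$) data. Everything else is routine Fubini and a change of variables, so the density step I described is really what carries the argument across the class of all admissible windows and signals.
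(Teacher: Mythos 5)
Your argument is correct and is essentially the standard proof of the orthogonality relations that the paper invokes via its citation of Corollary 3.2.2 in Gr\"ochenig: the identity $V_\phi f(x,\xi)=\mathcal{F}(f\cdot T_x\bar\phi)(\xi)$ from \eqref{eq1}, Plancherel in $\xi$, Tonelli, and translation invariance, followed by a density/sesquilinearity limiting argument. You also correctly flag the one genuine subtlety (Plancherel cannot be applied directly to $f\cdot T_x\bar\phi$ for general $L^2$ data), so nothing is missing.
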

		\begin{proof}
			Cf. Corollary 3.2.2 in \cite{groechenig}.
		\end{proof}
		\begin{rem}
			In particular, if $\| \phi \|_{L^2(\R^n)} = 1$, then the STFT is an isometry from $L^2(\R^n)$ into $L^2(\R^{2n})$ and it holds
			\[ \| V_\phi f \|_{L^2(\R^n)} = \| f \|_{L^2(\R^n)}. \] 
		\end{rem}
		Hence, the short-time Fourier transform $V_\phi f$ determines the function $f$ completely. This means if $V_\phi f(x,\xi) = (2\pi)^{-\frac{n}{2}} ( f, M_\xi T_x \phi )_{L^2} = 0$ for all $(x,\xi) \in \R^{2n}$, then $f=0$ on $\R^n$. Thus, it is natural to ask for an inversion formula as it is known for the usual Fourier transform. The fundamental theory justifying these considerations is presented in chapter 3 in \cite{rudin}. 
		\begin{thm} \label{inversion}
			Let $\phi,\gamma \in L^2(\R^n)$ such that $\langle \phi, \gamma \rangle \neq 0$. Then 
			\[ f = (2\pi)^{-\frac{n}{2}} \frac{1}{( \phi, \gamma )_{L^2}} \iint_{\R^{2n}} V_\phi f(x,\xi) M_\xi T_x \gamma \, d\xi \, dx \]
			for every $f\in L^2(\R^n)$.
		\end{thm}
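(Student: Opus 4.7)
The plan is to establish the identity weakly. Since for $f\in L^2(\R^n)$ the STFT $V_\phi f$ lies in $L^2(\R^{2n})$ but generally not in $L^1(\R^{2n})$, the double integral must be interpreted as a Pettis-type integral in $L^2(\R^n)$. Concretely, it suffices to show that for every test function $h\in L^2(\R^n)$ the scalar identity
\[ (f,h)_{L^2} \;=\; (2\pi)^{-n/2}\frac{1}{(\phi,\gamma)_{L^2}}\iint_{\R^{2n}} V_\phi f(x,\xi)\,(M_\xi T_x\gamma, h)_{L^2}\,d\xi\,dx \]
holds; by the Riesz representation theorem this pins down the right-hand side of the inversion formula as an element of $L^2(\R^n)$ that necessarily coincides with $f$.

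The first key step is to rewrite the inner product inside the integral using representation (\ref{eq2}):
\[ (M_\xi T_x\gamma, h)_{L^2} \;=\; \overline{(h, M_\xi T_x\gamma)_{L^2}} \;=\; (2\pi)^{n/2}\,\overline{V_\gamma h(x,\xi)}, \]
so that the right-hand side collapses into a single $L^2(\R^{2n})$ inner product,
\[ \frac{1}{(\phi,\gamma)_{L^2}}\,(V_\phi f, V_\gamma h)_{L^2(\R^{2n})}. \]
The second key step is to establish the polarised version of Proposition \ref{orthogonal} (Moyal's identity), namely
\[ (V_\phi f, V_\gamma h)_{L^2(\R^{2n})} \;=\; (f,h)_{L^2}\,\overline{(\phi,\gamma)_{L^2}}, \]
which follows from the $L^2$-norm equality in Proposition \ref{orthogonal} by a standard polarisation argument exploiting that $V_\phi f$ is linear in $f$ and conjugate-linear in $\phi$. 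After dividing by $(\phi,\gamma)_{L^2}$ the right-hand side reduces to $(f,h)_{L^2}$, completing the verification for every $h$ and hence the weak equality in $L^2(\R^n)$.

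The main obstacle, and the point requiring care, is the legitimacy of the interchange of integration and inner product underlying the first step, given that $V_\phi f$ is not in general absolutely integrable on $\R^{2n}$. This is secured by observing that, after the substitution above, the scalar integrand factors (up to the constant $(2\pi)^{n/2}$) as $V_\phi f(x,\xi)\,\overline{V_\gamma h(x,\xi)}$, and Cauchy--Schwarz in $L^2(\R^{2n})$ yields $V_\phi f\cdot\overline{V_\gamma h}\in L^1(\R^{2n})$. Fubini then applies, and the weak double integral in question defines a bounded conjugate-linear functional on $L^2(\R^n)$ that, by the computation above, is represented uniquely by $f$.
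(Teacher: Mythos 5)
Your argument is correct and is essentially the proof of Corollary 3.2.3 in Gr\"ochenig's book, which is all the paper offers for this theorem (it is stated by citation only): weak/Pettis interpretation of the vector-valued integral, rewriting the pairing via (\ref{eq2}), the polarized orthogonality relations obtained from Proposition \ref{orthogonal}, and boundedness of the resulting conjugate-linear functional via Cauchy--Schwarz. The one point to watch is that your computation actually yields the factor $\overline{(\phi,\gamma)_{L^2}}/(\phi,\gamma)_{L^2}$ rather than $1$, so the normalizing constant should really be $1/(\gamma,\phi)_{L^2}$ (as in Gr\"ochenig); this is immaterial for real-valued windows but means the final step ``dividing by $(\phi,\gamma)_{L^2}$ reduces the right-hand side to $(f,h)_{L^2}$'' is not literally correct as written.
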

		\begin{proof}
			Cf. Corollary 3.2.3 in \cite{groechenig}.
		\end{proof}
		\begin{rem}
			In Corollary 11.2.7 in \cite{groechenig} it is shown that the inversion formula also holds in $\S'(\R^n)$. 
		\end{rem}
		After we established the so-called inversion formula of time-frequency analysis the existence of an adjoint $V_\phi^*$ of $V_\phi$ is of great interest. Therefore we define a linear operator $A_\phi$ by 
		\[ A_\phi F = (2\pi)^{-\frac{n}{2}} \iint_{\R^{2n}} F(x,\xi) M_\xi T_x \phi \, dx\, d\xi, \qquad F\in L^2(\R^{2n}) \]
		with an admissible window function $\phi\in L^2(\R^n)\setminus \{0\}$. In fact, the operator $A_\phi$ is the adjoint of $V_\phi$ if we consider the short-time Fourier transform as a map from $L^2(\R^n)$ to $L^2(\R^n)$. Thus, $A_\phi = V_\phi^*$. From Theorem \ref{inversion} we can deduce 
		\[ ( f,h )_{L^2} = \frac{1}{( \phi, \gamma )_{L^2}} ( V_\gamma^* V_\phi f,h )_{L^2} \]
		and thus 
		\begin{equation} \label{adjointId}
			\frac{1}{( \phi, \gamma )_{L^2}} V_\gamma^* V_\phi = I, 
		\end{equation}
		where $I$ is the identity operator on $L^2(\R^n)$.\\
		By now we worked with admissible functions to show the concepts of how to get the basic properties of the short-time Fourier transform. These results can be naturally extended to distributions. \\
		Another helpful result is the following lemma.
	\begin{lma} \label{twisted}
		If $\phi_0,\phi,\gamma \in \S(\R^n)\setminus \{0\}$ such that $\langle \gamma,\phi \rangle \neq 0$ and $f\in\S'(\R^n)$, then it holds
		\[ |V_{\phi_0} f(x,\xi)| \leq (2\pi)^{-\frac{n}{2}} \frac{1}{|( \gamma,\phi )_{L^2}|} ( |V_\phi f| \ast |V_{\phi_0} \gamma|) (x,\xi) \]
		for all $(x,\xi)\in\R^{2n}$. 
	\end{lma}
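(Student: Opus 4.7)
The plan is to expand $f$ via the reproducing identity (\ref{adjointId}), apply $V_{\phi_0}$ termwise, invoke the covariance of the STFT under time-frequency shifts, and read off the convolution on the right-hand side.

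First I would rewrite (\ref{adjointId}) in the form
\[ f = \frac{(2\pi)^{-n/2}}{(\phi,\gamma)_{L^2}} \iint_{\R^{2n}} V_\phi f(y,\eta)\, M_\eta T_y \gamma \, dy\, d\eta, \]
which extends to $\S'(\R^n)$ by the remark following Theorem \ref{inversion}. Applying $V_{\phi_0}$ to both sides and passing it under the integral would yield
\[ V_{\phi_0} f(x,\xi) = \frac{(2\pi)^{-n/2}}{(\phi,\gamma)_{L^2}} \iint_{\R^{2n}} V_\phi f(y,\eta)\, V_{\phi_0}(M_\eta T_y \gamma)(x,\xi) \, dy\, d\eta. \]
A direct change of variable $s\mapsto s+y$ in Definition \ref{STFT} then produces the covariance identity
\[ V_{\phi_0}(M_\eta T_y \gamma)(x,\xi) = e^{\i y\cdot(\eta-\xi)}\, V_{\phi_0}\gamma(x-y,\xi-\eta), \]
whose modulus is simply $|V_{\phi_0}\gamma(x-y,\xi-\eta)|$. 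Taking absolute values inside the double integral, using $|(\phi,\gamma)_{L^2}|=|(\gamma,\phi)_{L^2}|$, and recognizing the resulting double integral as a convolution on $\R^{2n}$ would immediately give the bound stated in the lemma.

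The main technical obstacle is justifying the interchange of $V_{\phi_0}$ with the vector-valued inversion integral when $f$ is only a tempered distribution. I would handle this by pairing both sides of the inversion formula with an arbitrary test function in $\S(\R^n)$; since Proposition \ref{STFTdecay} guarantees that the STFT of a Schwartz function is Schwartz and Proposition \ref{STFTtemp} ensures that $V_\phi f$ is continuous and of at most polynomial growth, Fubini's theorem applies and the convolution on the right is absolutely convergent pointwise in $(x,\xi)$. Once this measure-theoretic step is settled, the remainder of the argument is purely algebraic, resting entirely on the covariance identity and on the reproducing formula (\ref{adjointId}).
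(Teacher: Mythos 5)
Your argument is correct and is precisely the standard proof of this change-of-window estimate (it is the argument behind Lemma 11.3.3 in \cite{groechenig}, which is what the paper's citation to Lemma 16 in \cite{reich} ultimately rests on): expand $f$ by the inversion formula with the pair $(\phi,\gamma)$, apply $V_{\phi_0}$ under the integral, use the covariance identity $V_{\phi_0}(M_\eta T_y\gamma)(x,\xi)=e^{\i y\cdot(\eta-\xi)}V_{\phi_0}\gamma(x-y,\xi-\eta)$, and take moduli, noting $|(\phi,\gamma)_{L^2}|=|(\gamma,\phi)_{L^2}|$. Your handling of the distributional case --- pairing with test functions and using the polynomial growth of $V_\phi f$ for $f\in\S'$ against the rapid decay of $V_{\phi_0}\gamma$ from Proposition \ref{STFTdecay} --- is exactly the justification needed, so the proof is complete.
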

	\begin{proof}
		Cf. Lemma 16 in \cite{reich}.
	\end{proof}
		
		\subsubsection{Alternative Approach} \label{altApproach}
		Recalling that the idea of the short-time Fourier transform in fact was to obtain local frequency properties of a function $f$ by taking the Fourier transform in a so-called window. As a window we chose a sufficiently smooth function $\phi$ to avoid discontinuities. By roughly adopting this idea we can establish the following approach to the STFT. \\
		The so-called \textit{frequency-uniform decomposition} gives rise to find another definition of modulation spaces. For that let $\rho: \R^n \mapsto [0,1]$ be a Schwartz function which is compactly supported in the cube $Q_0 := \{ \xi \in \R^n: -1\leq \xi_i \leq 1, i=1,\ldots,n \}$. Moreover, $\rho(\xi)=1$ if $|\xi|\leq \frac{1}{2}$. Naturally one obtains the shifted functions $\rho_k(\xi) = \rho(\xi-k)$ for $k\in \Z^n$. Finally we define
		\[ \sigma_k(\xi) = \rho_k(\xi) \left(\sum_{k\in\Z^n} \rho_k(\xi)\right)^{-1}, \quad k\in\Z^n \]
		with the following obvious properties:
		\begin{itemize}
			\item $|\sigma_k(\xi)| \geq C$ for all $\xi\in B(k, \frac{1}{2}):= \{ \xi\in\R^n : |\xi-k|\leq \frac{1}{2} \}$;
			\item $\supp \sigma_k \subset Q_k := \{ \xi \in \R^n: -1\leq \xi_i -k_i \leq 1, i=1,\ldots,n \} \subset B(k,\sqrt{n})$;
			\item $\displaystyle \sum_{k\in\Z^n} \sigma_k(\xi) \equiv 1$ for all $\xi\in\R^n$;
			\item $|D^\alpha \sigma_k(\xi)|\leq C_m$ for all $\xi\in\R^n$ and $|\alpha|\leq m$.
		\end{itemize}
		The operator
		\[ \Box_k := \F^{-1} \left( \sigma_k \F (\cdot) \right), \quad k\in\Z^n \]
		is called \textit{uniform decomposition operator}. Now the similarity to Definition \ref{STFT} of the STFT is obvious. Taking Lemma \ref{STFTrepresent} into account we have
		\begin{eqnarray*}
			\Box_k f(x) & = & \F^{-1} \left( \sigma_k \hat{f} \right)(x) \\
				& = & \int_{\R^n} \hat{f}(\eta) \sigma(\eta-k) e^{\i\eta\cdot x} \, d\eta \\
				& = & (2\pi)^{\frac{n}{2}} (V_\sigma \hat{f}) (k, -x) \\
				& = & (2\pi)^{\frac{n}{2}} e^{\i k\cdot x} (V_{\hat{\sigma}} f)(-x, -k).
		\end{eqnarray*}
		Note that $k\in\Z^n$, i.e., the frequency-uniform decomposition handles discrete frequencies. 
		
		\subsection{Modulation Spaces}
			So far we found a joint time-frequency representation of a function $f$ namely its short-time Fourier transform $V_\phi f$. The goal was to get information about the behavior of a function and its Fourier transform at the same time. After we obtained those information we naturally want to control them in some sense. Therefore we introduce weighted modulation spaces. A detailed concept of \emph{weights} can be found in chapter 11 in \cite{groechenig}. Subsequently however we will only use particular weights. 
		\begin{df} \label{modCont}
		The so-called \emph{integrability parameters} are given by $1\leq p,q\leq\infty$. Let $\phi\in \S(\R^n)\setminus \{0\}$ be a fixed window and assume $s,\sigma \in\R$ to be the weight parameters. Then the \emph{weighted modulation space} $\mathring{M}^{p,q}_{s,\sigma}(\R^n)$ is the set
		\[ \mathring{M}^{p,q}_{s,\sigma} (\R^n) := \{ f\in \S'(\R^n): \|f\|_{\mathring{M}^{p,q}_{s,\sigma}(\R^n)} < \infty \}, \]
		where the norm is defined as
		\[ \|f\|_{\mathring{M}^{p,q}_{s,\sigma}(\R^n)} = \left( \int_{\R^n} \left( \int_{\R^n} |V_\phi f(x,\xi) \langle x\rangle^\sigma \langle \xi\rangle^s|^p dx \right)^{\frac{q}{p}} d\xi \right)^{\frac{1}{q}}. \]
		Furthermore, the weighted modulation space $W^{p,q}_{s,\sigma}(\R^n)$ consists of all tempered distributions $f\in \S'(\R^n)$ such that their norm
		\[ \|f\|_{W^{p,q}_{s,\sigma}(\R^n)} = \left( \int_{\R^n} \left( \int_{\R^n} |V_\phi f(x,\xi) \langle x\rangle^\sigma \langle \xi\rangle^s|^q d\xi \right)^{\frac{p}{q}} dx \right)^{\frac{1}{p}} \]
		is finite. \\
		For $p=\infty$ and/or $q=\infty$ the definition can be obviously modified by taking $L^\infty$ norms. 
	\end{df}
	\begin{rem}
		Note that 
		\[ \langle x\rangle^\sigma = (1+|x|^2)^{\frac{\sigma}{2}} \qquad \mbox{and} \qquad  \langle \xi\rangle^s = (1+|\xi|^2)^{\frac{s}{2}}. \]
		If $s=\sigma=0$ then we obtain the so-called \emph{standard modulation space} $\mathring{M}^{p,q}(\R^n)$, that is the modulation space without any weights. If we only have $\sigma=0$, i.e., no weight with respect to $x$-variable, then the weighted modulation space is denoted by $\mathring{M}^{p,q}_s(\R^n)$. Subsequently the space $\mathring{M}^{p,q}_{s,\sigma}(\R^n)$ is just referred to as modulation space. Furthermore if $p=q$ we write $\mathring{M}^p_{s,\sigma}(\R^n)$ instead of $\mathring{M}^{p,p}_{s,\sigma}(\R^n)$. \\
		The same notations apply to the modulation space $W^{p,q}_{s,\sigma}(\R^n)$. Additionally all following results hold analogously for $W^{p,q}_{s,\sigma}(\R^n)$. 
	\end{rem}
	Note that the weight expression with respect to $x$ in the preceding definition corresponds to some growth or decay properties of a function $f$ in the modulation space $\mathring{M}^{p,q}_{s,\sigma}$. On the other hand the weight expression with respect to $\xi$ corresponds to regularity properties of $f$ in $\mathring{M}^{p,q}_{s,\sigma}$. The following proposition shows these facts in a mathematically more precise way. Here we recall that $D_j=\frac{1}{\i} \frac{\partial}{\partial x_j}$ for $1\leq j \leq n$.
	\begin{prop} \label{ModHomeo}
		Let $s,s_0,\sigma,\sigma_0 \in \R$ and $1\leq p,q\leq \infty$ be the integrability parameters. Then it holds:
		\begin{itemize}
			\item the map $f \mapsto \langle \cdot \rangle^{\sigma_0} f$ is a homeomorphism from $\mathring{M}^{p,q}_{s,\sigma+\sigma_0}(\R^n)$ to $\mathring{M}^{p,q}_{s,\sigma}(\R^n)$ and
			\item the map $f \mapsto \langle D \rangle^{s_0} f$ is a homeomorphism from $\mathring{M}^{p,q}_{s+s_0,\sigma}(\R^n)$ to $\mathring{M}^{p,q}_{s,\sigma}(\R^n)$.
		\end{itemize}		
	\end{prop}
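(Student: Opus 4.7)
My plan is to observe first that in each item the map in question has an obvious inverse of the same form — multiplication by $\langle\cdot\rangle^{-\sigma_0}$ in the first case, the Bessel potential $\langle D\rangle^{-s_0}$ in the second — so that it will be enough to establish the one-sided bounds $\|\langle\cdot\rangle^{\sigma_0}f\|_{\mathring{M}^{p,q}_{s,\sigma}} \lesssim \|f\|_{\mathring{M}^{p,q}_{s,\sigma+\sigma_0}}$ and $\|\langle D\rangle^{s_0}f\|_{\mathring{M}^{p,q}_{s,\sigma}} \lesssim \|f\|_{\mathring{M}^{p,q}_{s+s_0,\sigma}}$ for every real $\sigma_0$ respectively $s_0$. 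Applying each bound once with the given parameter and once with its negative then yields the bijectivity and bicontinuity required for a homeomorphism.

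For the multiplication statement I would fix $\phi\in\S(\R^n)\setminus\{0\}$, set $g:=\langle\cdot\rangle^{\sigma_0}f$, and rewrite the STFT of Definition \ref{STFT} as
\[
V_\phi g(x,\xi) = \langle x\rangle^{\sigma_0}\,V_{\phi_x}f(x,\xi), \qquad \phi_x(t) := \frac{\langle t+x\rangle^{\sigma_0}}{\langle x\rangle^{\sigma_0}}\,\phi(t).
\]
Peetre's inequality gives $\langle t+x\rangle^{\sigma_0}/\langle x\rangle^{\sigma_0}\le C_{\sigma_0}\langle t\rangle^{|\sigma_0|}$, and Leibniz's rule yields the analogous bounds on all derivatives, so that $\{\phi_x\}_{x\in\R^n}$ is a bounded subset of $\S(\R^n)$. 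Lemma \ref{twisted} with a fixed reference window $\gamma\in\S(\R^n)\setminus\{0\}$ then produces a pointwise domination $|V_{\phi_x}f(x,\xi)|\le C(|V_\gamma f|*H)(x,\xi)$, where $H\in\S(\R^{2n})$ majorises $|V_{\phi_x}\gamma|$ uniformly in $x$. Multiplying by $\langle x\rangle^\sigma\langle\xi\rangle^s$, applying a Young/Minkowski convolution estimate in the mixed-norm space $L^p_xL^q_\xi$, and invoking the window-independence of the modulation-space norm will deliver $\|g\|_{\mathring{M}^{p,q}_{s,\sigma}}\lesssim\|f\|_{\mathring{M}^{p,q}_{s,\sigma+\sigma_0}}$.

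For the Bessel-potential statement I would reduce to the first item via the Fourier-switching identity (\ref{eq5}), $V_\phi f(x,\xi)=e^{-\i x\cdot\xi}V_{\hat\phi}\hat f(\xi,-x)$, together with $\widehat{\langle D\rangle^{s_0}f}=\langle\cdot\rangle^{s_0}\hat f$. This shows that the $\langle\xi\rangle^s$-weighted modulation norm of $\langle D\rangle^{s_0}f$ equals — up to swapping the roles of the $x$- and $\xi$-variables and, correspondingly, of the integrability parameters $p$ and $q$ — the $\langle\cdot\rangle^s$-weighted modulation norm of $\langle\cdot\rangle^{s_0}\hat f$, to which the first item applies with window $\hat\phi$.

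The main technical obstacle I foresee is the uniform-in-$x$ handling of the perturbed window $\phi_x$: I must secure Schwartz seminorm estimates on $\phi_x$ and all its derivatives that are independent of $x$, together with a lower bound on $|(\gamma,\phi_x)_{L^2}|$ bounded away from zero uniformly in $x$, since otherwise the constants in Lemma \ref{twisted} blow up. Peetre's inequality and Leibniz's rule give the former; the latter can be arranged by choosing $\gamma=\phi$ and applying the two-sided form of Peetre to conclude that $\int|\phi(t)|^2\langle t+x\rangle^{\sigma_0}/\langle x\rangle^{\sigma_0}\,dt$ is uniformly comparable to $\|\phi\|_{L^2}^2>0$.
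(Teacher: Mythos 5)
Your argument is essentially correct, but it is genuinely different from what the paper does: the paper gives no proof at all and simply quotes Corollary 3.3 of Toft's \emph{Convolutions and Embeddings for Weighted Modulation Spaces}, so your proposal supplies a self-contained argument where the paper has only a citation. Your route — the exact identity $V_\phi(\langle\cdot\rangle^{\sigma_0}f)(x,\xi)=\langle x\rangle^{\sigma_0}V_{\phi_x}f(x,\xi)$ with the $x$-dependent window $\phi_x(t)=\langle t+x\rangle^{\sigma_0}\langle x\rangle^{-\sigma_0}\phi(t)$, Peetre/Leibniz to see that $\{\phi_x\}_x$ is bounded in $\S(\R^n)$, the change-of-window Lemma \ref{twisted} with a uniform Schwartz majorant of $|V_{\phi_x}\gamma|$, and a weighted Young inequality in $L^{p,q}$ — is the standard time-frequency proof and is sound; reducing the $\langle D\rangle^{s_0}$ statement to the multiplication statement via \eqref{eq5} is also legitimate, provided you note that after the Fourier switch you land in the $W$-scale (outer norm in the other variable), so you need the first item proved for $W^{q,p}_{\sigma,s}$ as well — which costs nothing, since your argument never uses the order of the $x$- and $\xi$-integrations. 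One small correction: the technical obstacle you flag at the end is not actually there. In Lemma \ref{twisted} the constant is $1/|(\gamma,\phi)_{L^2}|$, involving only the auxiliary function $\gamma$ and the \emph{reference} window $\phi$ on the right-hand side; the new window $\phi_0=\phi_x$ enters only through the convolution kernel $|V_{\phi_0}\gamma|$. Taking $\gamma=\phi$ the fixed reference window, the denominator is $\|\phi\|_{L^2}^2$, independent of $x$, so no uniform lower bound on $(\gamma,\phi_x)_{L^2}$ is required (your fallback estimate for that quantity via the two-sided Peetre inequality is nevertheless correct). What your approach buys is an explicit, elementary proof inside the framework the paper has already set up (Lemma \ref{twisted} plus Proposition \ref{STFTdecay}); what the citation buys the author is generality, since Toft's result covers a broader class of moderate weights than $\langle\cdot\rangle^{\sigma}$.
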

	\begin{proof}
		Cf. Corollary 3.3 in \cite{toftWeight}.
	\end{proof}
	Summarizing Definition \ref{modCont} we imposed on the short-time Fourier transform of a function $f$ some $L^p$ and $L^q$ behavior, respectively. However in Section \ref{altApproach} we found an alternative approach to the STFT. We will prove that defining modulation spaces with the help of the uniform decomposition operator is also reasonable. In particular it is equivalent.
	\begin{df} \label{defdecomp}
		Let $1\leq p,q \leq \infty$ and assume $s \in\R$ to be the weight parameter. Then the \emph{weighted modulation space} $M^{p,q}_{s}(\R^n)$ consists of all tempered distributions $f\in \S'(\R^n)$ such that their norm
		\[ \|f\|_{M^{p,q}_{s}(\R^n)} = \left( \sum_{k\in\Z^n} \langle k \rangle^{sq} \|\Box_k f\|_{L^p}^q \right)^{\frac{1}{q}} \]
		is finite with obvious modifications when $p=\infty$ and/or $q=\infty$.
	\end{df}
	
	In order to prove Proposition \ref{normequivalence} we need the so-called \textit{Bernstein's multiplier estimate} which is stated in \cite{wang}.
	\begin{lma} \label{bernstein}
			Assume that $s>\frac{n}{2}$. Then there exists a constant $C>0$ such that
			\[ \|\F^{-1} \left( \phi\F f \right)\|_{L^r} \leq C \|\phi\|_{H^s} \|f\|_{L^r} \]
			for all $f\in L^r(\R^n)$ and $\phi \in H^s(\R^n)$.
	\end{lma}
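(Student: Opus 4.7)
The plan is to rewrite the Fourier multiplier as a convolution operator and then bound the $L^1$ norm of the kernel by the $H^s$ norm of the symbol. With the $(2\pi)^{-n/2}$ normalization of $\F$ used in the paper, the convolution theorem yields
\[ \F^{-1}(\phi\,\F f) = (2\pi)^{-\frac{n}{2}}(\F^{-1}\phi)\ast f, \]
and Young's convolution inequality with exponents $(1,r,r)$ then gives
\[ \|\F^{-1}(\phi\,\F f)\|_{L^r} \leq (2\pi)^{-\frac{n}{2}} \|\F^{-1}\phi\|_{L^1}\|f\|_{L^r}. \]
So the task reduces to proving the Sobolev-type embedding $\|\F^{-1}\phi\|_{L^1}\leq C\|\phi\|_{H^s}$ whenever $s>n/2$, i.e.\ the continuous inclusion $H^s(\R^n)\hookrightarrow \F L^1(\R^n)$ of $H^s$ into the Wiener algebra.

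To establish this embedding, I would insert the weight $\langle x\rangle^s$ and apply Cauchy--Schwarz on $\R^n$:
\[ \int_{\R^n}|\F^{-1}\phi(x)|\,dx \leq \left(\int_{\R^n}\langle x\rangle^{-2s}\,dx\right)^{\frac{1}{2}} \left(\int_{\R^n}\langle x\rangle^{2s}|\F^{-1}\phi(x)|^2\,dx\right)^{\frac{1}{2}}. \]
The first factor is finite \emph{precisely} when $2s>n$, and this is the only place where the hypothesis $s>n/2$ enters. For the second factor, I would use the identity $\F^{-1}\phi(x)=\F\phi(-x)=\hat\phi(-x)$ together with the evenness of $\langle\cdot\rangle$; substituting $y=-x$ gives
\[ \int_{\R^n}\langle x\rangle^{2s}|\F^{-1}\phi(x)|^2\,dx = \int_{\R^n}\langle y\rangle^{2s}|\hat\phi(y)|^2\,dy = \|\phi\|_{H^s}^2, \]
the last equality being the Plancherel description of the $H^s$ norm. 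Combining this with the reduction above and with Young's inequality finishes the proof.

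There is no genuine obstacle; the argument is a clean application of Young's inequality and the weighted Cauchy--Schwarz trick. The only points that deserve a moment of care are tracking the $(2\pi)^{-n/2}$ constants coming from the chosen Fourier normalization and the Plancherel identification of $\|\langle\cdot\rangle^s \F^{-1}\phi\|_{L^2}$ with $\|\phi\|_{H^s}$. The structural content of the lemma is exactly that $n/2$ is the critical Sobolev exponent for the embedding into the Wiener algebra $\F L^1$, and the proof makes this transparent.
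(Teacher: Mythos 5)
Your proof is correct. The paper itself gives no proof of this lemma --- it is quoted from the reference of Wang and Hudzik --- so there is nothing to compare against; your argument (reduce to $\|\F^{-1}\phi\|_{L^1}\|f\|_{L^r}$ via the convolution theorem and Young's inequality, then prove the embedding $H^s\hookrightarrow \F L^1$ for $s>n/2$ by weighted Cauchy--Schwarz and Plancherel) is the standard one and all steps, including the $(2\pi)^{-n/2}$ bookkeeping for the paper's normalization of $\F$, check out.
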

	\begin{rem}
			As mentioned in \cite{wang} this lemma also holds for $s>n \left(\frac{1}{\min(r,1)} - \frac{1}{2} \right)$ with $0<r<1$ assuming that $f\in L^r_\Omega := \{f\in L^r: \supp \hat{f} \subset \Omega\}$, where $\Omega\subset \R^n$ is a compact set.
	\end{rem}
	
	\begin{prop} \label{normequivalence}
		The norms of the Definitions \ref{modCont} and \ref{defdecomp} are equivalent. For admissible functions $f$ it holds
		\[ C_1 \|f \|_{\mathring{M}^{p,q}_{s}(\R^n)} \leq \|f\|_{M^{p,q}_{s}(\R^n)} \leq C_2 \|f\|_{\mathring{M}^{p,q}_{s}(\R^n)}, \] 
		where the positive constants $C_1$ and $C_2$ are depending on the dimension $n$. Furthermore they are depending on the window function and on the frequency-uniform decomposition functions, respectively. 
	\end{prop}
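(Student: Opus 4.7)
The plan rests on the sampling identity established in Section \ref{altApproach},
\[
  \Box_k f(x) \;=\; (2\pi)^{n/2}\, e^{\i k\cdot x}\, V_{\hat{\sigma}} f(-x,-k),
\]
which, after the change of variables $x\mapsto -x$, gives $\|\Box_k f\|_{L^p(\R^n)} = (2\pi)^{n/2}\|V_{\hat{\sigma}} f(\cdot,-k)\|_{L^p(\R^n)}$. The discrete norm $\|f\|_{M^{p,q}_s}$ is therefore a weighted $\ell^q$-sampling of the function $\xi\mapsto\|V_{\hat{\sigma}} f(\cdot,\xi)\|_{L^p}$ at the integer points $-k\in\Z^n$, while the continuous norm $\|f\|_{\mathring{M}^{p,q}_s}$ is a weighted $L^q$-integral of $\xi\mapsto\|V_\phi f(\cdot,\xi)\|_{L^p}$; note that $\langle k\rangle^s\sim\langle\xi\rangle^s$ uniformly for $\xi$ in any bounded neighbourhood of $k$. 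The remaining tasks are then (i) to interchange the two windows $\hat{\sigma}$ and $\phi$, for which Lemma \ref{twisted} is the designated tool, and (ii) to pass between the discrete sum and the continuous integral via weighted Young/Schur-type estimates.

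For the direction $\|f\|_{M^{p,q}_s}\lesssim\|f\|_{\mathring{M}^{p,q}_s}$, applying Lemma \ref{twisted} with $\phi_0=\hat{\sigma}$, the continuous window $\phi$, and an auxiliary $\gamma\in\S(\R^n)$ with $(\gamma,\phi)_{L^2}\neq 0$ gives
\[
  |V_{\hat{\sigma}} f(x,-k)| \;\lesssim\; \int_{\R^{2n}} |V_\phi f(y,\eta)|\,|V_{\hat{\sigma}}\gamma(x-y,-k-\eta)|\,dy\,d\eta.
\]
Taking $L^p_x$-norms and using Young's inequality in the $x$-convolution produces $\|V_{\hat{\sigma}} f(\cdot,-k)\|_{L^p}\lesssim\int a(\eta)\,b(-k-\eta)\,d\eta$ with $a(\eta)=\|V_\phi f(\cdot,\eta)\|_{L^p}$ and $b(\zeta)=\|V_{\hat{\sigma}}\gamma(\cdot,\zeta)\|_{L^1}$. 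Since $V_{\hat{\sigma}}\gamma\in\S(\R^{2n})$, $b$ decays faster than any polynomial. Peetre's inequality $\langle k\rangle^s\lesssim\langle\eta\rangle^s\langle k+\eta\rangle^{|s|}$ transfers the weight onto $a$, and a Schur-test estimate on the kernel $K(k,\eta)=\langle k+\eta\rangle^{|s|}b(-k-\eta)$ (for which both $\sup_k\int|K(k,\eta)|\,d\eta$ and $\sup_\eta\sum_k|K(k,\eta)|$ are finite) bounds the resulting $\ell^q$-sum by $\|\langle\cdot\rangle^s a\|_{L^q}=\|f\|_{\mathring{M}^{p,q}_s}$.

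For the reverse direction $\|f\|_{\mathring{M}^{p,q}_s}\lesssim\|f\|_{M^{p,q}_s}$, apply Lemma \ref{twisted} with the roles of $\phi$ and $\hat{\sigma}$ swapped, obtaining the pointwise bound $|V_\phi f(x,\xi)|\lesssim (|V_{\hat{\sigma}}f|*|V_\phi\gamma|)(x,\xi)$. The discrete norm controls $c(\eta):=\|V_{\hat{\sigma}}f(\cdot,\eta)\|_{L^p}$ only at integer points, so the bridging step is to estimate $c(\eta)$ for $\eta$ in a unit cube around $-k$ by $c(-k)\sim\|\Box_k f\|_{L^p}$. This is supplied by the frequency partition $f=\sum_j\Box_j f$ combined with formula (\ref{eq7}): since $\sigma_j\hat{f}$ is supported in $Q_j$ and $\hat{\phi}\in\S$, the representation
\[
  V_\phi(\Box_j f)(x,\xi) = (2\pi)^{-n/2}\bigl(\sigma_j\hat f * M_{-x}\hat\phi^{*}\bigr)(\xi)
\]
makes $V_\phi(\Box_j f)$, and analogously $V_{\hat\sigma}(\Box_j f)$, rapidly decaying in $\mathrm{dist}(\xi,Q_j)$. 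A further Peetre manoeuvre to move $\langle\xi\rangle^s$ onto $\langle j\rangle^s$, followed by Young's inequality for the mixed sum-integral convolution, closes the estimate. The Bernstein multiplier estimate (Lemma \ref{bernstein}) can be called upon to handle $L^p$-boundedness of auxiliary multiplier operators arising in the bound for $V_\phi(\Box_j f)$.

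The main technical obstacle I anticipate is the clean bookkeeping of the back-and-forth between the discrete $\ell^q$-sum over $\Z^n$ and the continuous $L^q$-integral over $\R^n$: one must carefully verify that the Schwartz decay of the STFTs produced by Lemma \ref{twisted} is genuinely strong enough for the Schur/Young kernels to be absolutely summable uniformly in the other variable, and that every step survives the endpoint cases $p=\infty$ and/or $q=\infty$ where $L^p$ and $\ell^q$ collapse to suprema.
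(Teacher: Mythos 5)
Your argument is correct in outline, but it is not the route the paper takes for the core case $1\leq p,q<\infty$; it is, in essence, the scheme the paper reserves for the endpoint cases. For finite $p,q$ the paper first discretizes the continuous norm by the mean value theorem (sample points $\xi_k\in Q_k$), then works with a specially chosen window whose Fourier transform is compactly supported and $\equiv 1$ on $B(\xi_k,3\sqrt n)$, and compares $\F^{-1}(\sigma_k\hat f)$ with $\F^{-1}(\hat\phi(\cdot-\xi_k)\hat f)$ in both directions via Bernstein's multiplier estimate (Lemma \ref{bernstein}) and the bounded overlap of the supports; window independence (Theorem \ref{MpqLpq}) then removes the special choice of $\phi$. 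For $p=\infty$ or $q=\infty$ the paper switches to your method for one inclusion (Lemma \ref{twisted}, Young/Minkowski, and the Peetre-type weight estimate of Lemma 4.2 in \cite{brs}) and uses duality (Theorem \ref{Mduality}, resp.\ Proposition 1.4 in \cite{toftConv} for $p=1$ or $q=1$) for the other. Running the twisted-convolution argument in both directions and for all exponents, as you propose, buys uniformity: no duality, hence no separate treatment of the limit exponents, and no special window. What it costs is the bridging step in the direction $\|f\|_{\mathring{M}^{p,q}_{s}}\lesssim\|f\|_{M^{p,q}_{s}}$, and that is the one place in your sketch that needs repair: the pointwise representation you quote from \eqref{eq7} bounds $|V_{\hat\sigma}(\Box_jf)(x,\xi)|$ by $\|\sigma_j\hat f\|_{L^1}$ times a decaying factor, and $\|\sigma_j\hat f\|_{L^1}$ is not controlled by $\|\Box_jf\|_{L^p}$ for general $p$. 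The clean fix is to use \eqref{eq6} instead: $V_{\hat\sigma}(\Box_jf)(\cdot,\xi)$ is, up to a unimodular factor, $\F^{-1}\bigl[\widehat{\Box_jf}\cdot\sigma(\pm(\cdot-\xi))\bigr]$, which vanishes unless $\xi$ lies in a fixed-size neighbourhood of $\pm Q_j$ (since $\sigma$ is compactly supported) and obeys $\|V_{\hat\sigma}(\Box_jf)(\cdot,\xi)\|_{L^p}\leq C\|\Box_jf\|_{L^p}$ uniformly in $\xi$ by Young's inequality. This yields $\|V_{\hat\sigma}f(\cdot,\xi)\|_{L^p}\lesssim\sum_{|j+\xi|\leq C}\|\Box_jf\|_{L^p}$, which is exactly the bridge between the lattice samples and the continuous variable; with it, your Peetre/Schur/Young bookkeeping closes for all $1\leq p,q\leq\infty$, including the suprema endpoints.
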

	\begin{proof}
		The idea of the proof for finite integrability parameters $p$ and $q$ is given in \cite{wang}. \\		
		By \cite{groechenig} we get 
		\begin{eqnarray*}
			V_\phi f(x,\xi) & = & e^{-\i x\cdot\xi} V_{\hat{\phi}} \hat{f} (\xi, -x) \\
				& = & e^{-\i x\cdot\xi} \int_{\R^n} e^{\i x\cdot\omega} \overline{\hat{\phi}(\omega-\xi)} \hat{f}(\omega) \, d\omega \\
				& = & e^{-\i x\cdot\xi} \F^{-1} \left( \overline{\hat{\phi}(\cdot - \xi)} \hat{f}(\cdot) \right) (x)
		\end{eqnarray*}
		with an admissible window function $\phi\in \S(\R^n)$. \\
		First suppose that $1\leq p,q<\infty$ and $f\in \S(\R^n)$. Due to the mean value theorem there exists a $\xi_k \in Q_k$ for each $k\in \Z^n$ such that
		\begin{eqnarray*}
			\|f\|_{\mathring{M}^{p,q}_s} & = & \left( \int_{\R^n} \langle \xi \rangle^{sq} \|\F^{-1} \left( \overline{\hat{\phi}(\cdot - \xi)} \hat{f}(\cdot) \right) \|^q_{L^p} \, d\xi \right)^{\frac{1}{q}} \\
				& \sim & \left( \sum_{k\in\Z^n} \langle k \rangle^{sq} \|\F^{-1} \left( \overline{\hat{\phi}(\cdot - \xi_k)} \hat{f}(\cdot) \right) \|^q_{L^p} \right)^{\frac{1}{q}}.
		\end{eqnarray*}
		Assuming $\supp \hat{\phi}(\cdot - \xi_k) \subset B(\xi_k,100\sqrt{n})$ and $\hat{\phi}(\cdot-\xi_k)(\xi) = 1$ on $B(\xi_k,3\sqrt{n})$ we obtain 
		\begin{eqnarray*}
			\|\Box_k f\|_{L^p} & = & \| \F^{-1} ( \sigma_k \hat{f} ) \|_{L^p} \\
				& = & \|\F^{-1} \left( \sigma_k \hat{\phi}(\cdot - \xi_k) \hat{f} \right) \|_{L^p} \\
				& = & \| \F^{-1} \Big( \sigma_k \F(\phi(\cdot - x_k) \ast f) \Big) \|_{L^p} \\
				& \lesssim & \|\phi(\cdot - x_k) \ast f\|_{L^p} \\
				& = & \|\F^{-1} \left( \hat{\phi}(\cdot - \xi_k) \hat{f} \right) \|_{L^p}.
		\end{eqnarray*}
		Here we used Lemma \ref{bernstein} and the density of the Schwartz space $\S(\R^n)$ in the modulation space $M^{p,q}_s(\R^n)$. Hence,
		\[ \|f\|_{M^{p,q}_s} \lesssim \|f\|_{\mathring{M}^{p,q}_s}. \]
		For the second part we take into consideration that $\supp \hat{\phi}(\cdot - \xi_k)$ overlaps at most $\mathcal{O}(\sqrt{n})$ many supports of $\sigma_k$. Let $\Lambda$ be a set which contains at most $\mathcal{O}(\sqrt{n})$ many elements. Then we get
		\begin{eqnarray*}
			\| \F^{-1} \left( \hat{\phi}(\cdot -\xi_k) \hat{f}(\cdot) \right) \|_{L^p} & = & \left\| \F^{-1} \Big( \sum_{l\in \Lambda} \sigma_{k+l} \hat{\phi}(\cdot - \xi_k) \hat{f} \Big) \right\|_{L^p} \\
				& = & \left\| \F^{-1} \Bigg( \hat{\phi}(\cdot - \xi_k) \F \Big( \F^{-1} (\sum_{l\in\Lambda} \sigma_{k+l} ) \ast f \Big) \Bigg) \right\|_{L^p} \\
				& \lesssim & \left\| \F^{-1} \Big(\sum_{l\in \Lambda} \sigma_{k+l} \Big) \ast f \right\|_{L^p} \\
				& = & \left\| \F^{-1} \Big( \sum_{l\in\Lambda} \sigma_{k+l} \hat{f} \Big) \right\|_{L^p} \\
				& \leq & \sum_{l\in\Lambda} \left\| \F^{-1} \left( \sigma_{k+l} \hat{f} \right) \right\|_{L^p} \\
				& = & \sum_{l\in\Lambda} \|\Box_{k+l} f\|_{L^p}
		\end{eqnarray*}
		by using again Lemma \ref{bernstein} and density arguments. It follows
		\[ \|f\|_{\mathring{M}^{p,q}_s} \lesssim \|f\|_{M^{p,q}_s} \]
		for $1\leq p,q <\infty$. \\
		In the next step assume $p=\infty$ and $q>1$. Moreover, let $g \in \mathring{M}^{1,q'}_{-s}(\R^n)$, where $\frac{1}{q}+\frac{1}{q'}=1$. By duality which is shown in Theorem \ref{Mduality}, H\"older's inequality and support properties we obtain
		\begin{eqnarray*}
			|(f,g)_{L^2}| & \leq & \sum_{j,k\in \Z^n} | (\Box_j f, \Box_k g)_{L^2} | \\
				& \leq & \sum_{j,k \in \Z^n} \int_{\R^n} |\Box_j f(x)| |\Box_k g(x)| \, dx \\
				& = & \sum_{\substack{ j,k\in \Z^n, \\ -2\leq j_i -k_i \leq 2 }} \int_{\R^n} |\Box_j f(x)| |\Box_k g(x)| \, dx \\
				& \leq & C_1 \sum_{\substack{ j,k\in \Z^n, \\ -2\leq j_i -k_i \leq 2 }} \langle k \rangle^{-s} \langle j \rangle^s \int_{\R^n} |\Box_j f(x)| |\Box_k g(x)| \, dx \\
				& \leq & C_1 \sum_{\substack{ j,k\in \Z^n, \\ -2\leq j_i -k_i \leq 2 }} \left( \langle k \rangle^{-s} \| \Box_k g(x) \|_{L^1} \right) \left( \langle j \rangle^s \| \Box_j f(x) \|_{L^\infty } \right) \\
				& \leq & C_1 \Bigg( \sum_{\substack{ j,k\in \Z^n, \\ -2\leq j_i -k_i \leq 2 }} \langle k \rangle^{-sq'} \| \Box_k g(x) \|^{q'}_{L^1} \Bigg)^{\frac{1}{q'}} \Bigg( \sum_{\substack{ j,k\in \Z^n, \\ -2\leq j_i -k_i \leq 2 }} \langle j \rangle^{sq} \| \Box_j f(x) \|^q_{L^\infty } \Bigg)^{\frac{1}{q}} \\
				& \leq & C_2 \Bigg( \sum_{k\in \Z^n} \langle k \rangle^{-sq'} \| \Box_k g(x) \|^{q'}_{L^1} \Bigg)^{\frac{1}{q'}} \Bigg( \sum_{j\in \Z^n} \langle j \rangle^{sq} \| \Box_j f(x) \|^q_{L^\infty} \Bigg)^{\frac{1}{q}} \\
				& = & C_2 \| g\|_{M^{1,q'}_{-s} } \| f \|_{M^{\infty, q}_{s}} \\
				& \leq & C_3 \|g\|_{\mathring{M}^{1,q'}_{-s}(\R^n)} \| f \|_{M^{\infty, q}_{s}} \\
				& \leq & C_4 \| f \|_{M^{\infty, q}_{s} }.
		\end{eqnarray*}
		This computation yields $M^{\infty, q}_{s} \subset \mathring{M}^{\infty,q}_{s}$ if $q>1$. \\
		Let $\phi \in \S(\R^n)\setminus \{0 \}$ be an admissible window function. By using Lemma \ref{STFTrepresent}, Lemma \ref{twisted} and Lemma 4.2 in \cite{brs} the opposite inclusion is obtained as follows
		\begin{eqnarray*}
			\| f\|^q_{M^{\infty,q}_{s}} & = & \sum_{k\in \Z^n} \langle k \rangle^{sq} \|\Box_k f(\cdot) \|^q_{L^\infty} \\
				& = & \sum_{k\in \Z^n} \langle k \rangle^{sq} \left\| \int_{\R^n} \sigma (\eta -k) \hat{f}(\eta) e^{\i \eta\cdot x} \, d\eta \right\|^q_{L_x^\infty} \\
				& = & (2\pi)^{q\frac{n}{2}} \sum_{k\in \Z^n} \langle k \rangle^{sq} \| (V_\sigma \hat{f}) (k,-\cdot) \|^q_{L^\infty} \\
				& \leq & C_1 \sum_{k\in \Z^n} \langle k \rangle^{sq} \| (V_{\hat{\sigma}} f) (\cdot ,k) \|^q_{L^\infty} \\
				& \leq & C_2 \sum_{k\in \Z^n} \langle k \rangle^{sq} \sup_{x\in \R^n} [ (|V_\phi f| \ast |V_{\hat{\sigma}} \phi|) (x ,k) ]^q \\
				& \leq & C_2 \sum_{k\in \Z^n} \sup_{x\in \R^n} \left[ \int_{\R^n} \int_{\R^n} \langle k-\eta \rangle^{s} |(V_{\hat{\sigma}} \phi)(x-y, k-\eta)|  |(V_\phi f)(y,\eta)| \langle \eta \rangle^{s} \, dy \, d\eta \right]^q \\
				& \leq & C_2 \sum_{k\in \Z^n} \sup_{x\in \R^n} \left[ \int_{\R^n} \langle k-\eta \rangle^{s} \| (V_{\hat{\sigma}} \phi)(x-\cdot, k-\eta) \|_{L^1} \| \langle \eta \rangle^s (V_\phi f)(\cdot,\eta) \|_{L^\infty} \, d\eta \right]^q \\
				& = & C_2 \sum_{k\in \Z^n} \sup_{x\in \R^n} \Bigg[ \int_{\R^n} \left( \langle k-\eta \rangle^{s} \| (V_{\hat{\sigma}} \phi)(x-\cdot, k-\eta) \|_{L^1} \right)^{\frac{1}{q'}} \\
				& & \qquad \left( \langle k-\eta \rangle^{\frac{s}{q}} \| (V_{\hat{\sigma}} \phi)(x-\cdot, k-\eta) \|^{\frac{1}{q}}_{L^1} \| \langle \eta \rangle^s (V_\phi f)(\cdot,\eta) \|_{L^\infty} \right) \, d\eta \Bigg]^q \\
				& \leq & C_2 \sum_{k\in \Z^n} \sup_{x\in \R^n} \Bigg[  \left( \int_{\R^n} \langle k-\eta \rangle^{s} \| (V_{\hat{\sigma}} \phi)(x-\cdot, k-\eta) \|_{L^1} \, d\eta \right)^{\frac{1}{q'}} \\
				& & \qquad \left( \int_{\R^n} \langle k-\eta \rangle^{s} \| (V_{\hat{\sigma}} \phi)(x-\cdot, k-\eta) \|_{L^1} \| \langle \eta \rangle^{s} (V_\phi f)(\cdot,\eta) \|^q_{L^\infty} \, d\eta \right)^{\frac{1}{q}} \Bigg]^q \\
				& \leq & C_3 \sum_{k\in \Z^n} \sup_{x\in \R^n} \left( \int_{\R^n} \langle k-\eta \rangle^{s} \| (V_{\hat{\sigma}} \phi)(x-\cdot, k-\eta) \|_{L^1} \| \langle \eta \rangle^{s} (V_\phi f)(\cdot,\eta) \|^q_{L^\infty} \, d\eta \right) \\
				& \leq & C_3 \sum_{k\in \Z^n} \sup_{x, \eta \in \R^n} \big[ \langle k-\eta \rangle^{s} \| (V_{\hat{\sigma}} \phi)(x-\cdot, k-\eta) \|_{L^1} \big] \int_{\R^n} \langle \eta \rangle^{sq} \|(V_\phi f)(\cdot,\eta) \|^q_{L^\infty} \, d\eta \\
				& \leq & C_4 \|f\|^q_{\mathring{M}^{\infty, q}_{s}},
		\end{eqnarray*}
		where $f\in \mathring{M}^{\infty,q}_{s}(\R^n)$. Thus, we also have $\mathring{M}^{\infty,q}_{s} \subset M^{\infty, q}_{s}$ if $q>1$. \\
		Now it is left to show that $\mathring{M}^{p,\infty}_{s} = M^{p, \infty}_{s}$ if $p>1$. Note that $\frac{1}{p}+\frac{1}{p'} =1$. Let $g \in \mathring{M}^{p',1}_{-s}(\R^n)$. Again due to duality which is stated in Theorem \ref{Mduality}, H\"older's inequality and support properties we deduce 
		\begin{eqnarray*}
			|(f,g)_{L^2}| & \leq & \sum_{\substack{ j,k\in \Z^n, \\ -2\leq j_i -k_i \leq 2 }} \int_{\R^n} |\Box_j f(x)| |\Box_k g(x)| \, dx \\
				& \leq & C_1 \sum_{\substack{ j,k\in \Z^n, \\ -2\leq j_i -k_i \leq 2 }} \left( \langle k \rangle^{-s} \| \Box_k g(x) \|_{L^{p'}} \right) \left( \langle j \rangle^s \|\Box_j f(x) \|_{L^p} \right) \\
				& \leq &  C_1 \sup_{j\in \Z^n} \left( \langle j \rangle^s \|\Box_j f(x) \|_{L^p} \right) \sum_{\substack{ j,k\in \Z^n, \\ -2\leq j_i -k_i \leq 2 }} \left( \langle k \rangle^{-s} \| \Box_k g(x) \|_{L^{p'}} \right) 
		\end{eqnarray*}
		\begin{eqnarray*}
				& \leq & C_2 \sup_{j\in \Z^n} \left( \langle j \rangle^s \|\Box_j f(x) \|_{L^p} \right) \sum_{k\in \Z^n} \left( \langle k \rangle^{-s} \| \Box_k g(x) \|_{L^{p'}} \right) \\
				& = & C_2 \| g\|_{M^{p',1}_{-s}} \| f \|_{M^{p, \infty}_{s}} \\
				& \leq & C_3 \| g\|_{\mathring{M}^{p',1}_{-s}(\R^n)} \| f \|_{M^{p, \infty}_{s}} \\
				& \leq & C_4 \| f \|_{M^{p, \infty}_{s} }.
		\end{eqnarray*}
		Therefore it holds $M^{p, \infty}_{s} \subset \mathring{M}^{p, \infty}_{s}$ if $p>1$. \\
		Last step of the proof consists of showing the opposite inclusion. Let $\phi \in \S(\R^n)\setminus \{0 \}$ be an admissible window function. Recall that the short-time Fourier transform is shift-invariant. Using again Lemma \ref{STFTrepresent}, Lemma \ref{twisted}, Lemma 4.2 in \cite{brs} and Minkowski's inequality we obtain
		\begin{eqnarray*}
			\| f\|_{M^{p, \infty}_{s}} & = & \sup_{k\in \Z^n} \langle k \rangle^{s} \|\Box_k f(\cdot) \|_{L^p} \\
				& \leq & C_1 \sup_{k\in \Z^n} \langle k \rangle^{s} \| (V_{\hat{\sigma}} f) (x,k) \|_{L^p} \\
				& \leq & C_2 \sup_{k\in \Z^n} \left( \int_{\R^n_x} \left| \int_{\R^n_\eta} \int_{\R^n_y} \langle \eta \rangle^s |(V_{\hat{\sigma}} \phi)(y,\eta)| |(V_\phi f)(x-y,k-\eta)| \langle k-\eta \rangle^s \, dy \, d\eta \right|^p \, dx \right)^{\frac{1}{p}} \\
				& \leq & C_2 \sup_{k\in \Z^n} \int_{\R^n_\eta} \int_{\R^n_y} \left( \int_{\R^n_x} \left| \langle \eta \rangle^s |(V_{\hat{\sigma}} \phi)(y,\eta)| |(V_\phi f)(x-y,k-\eta)| \langle k-\eta \rangle^s \right|^p \, dx \right)^{\frac{1}{p}} \, dy \, d\eta \\
				& = & C_2 \sup_{k\in \Z^n} \int_{\R^n_\eta} \int_{\R^n_y}  \langle \eta \rangle^s |(V_{\hat{\sigma}} \phi)(y,\eta)| \left( \int_{\R^n_x} \left( |(V_\phi f)(x-y,k-\eta)| \langle k-\eta \rangle^s \right)^p \, dx \right)^{\frac{1}{p}} \, dy \, d\eta \\
				& = & C_2 \sup_{k\in \Z^n} \int_{\R^n_\eta} \int_{\R^n_y}  \langle \eta \rangle^s |(V_{\hat{\sigma}} \phi)(y,\eta)| \langle k-\eta \rangle^s \| (V_\phi f)(\cdot-y,k-\eta) \|_{L^p} \, dy \, d\eta \\
				& \leq & C_2 \sup_{k\in \Z^n} \int_{\R^n_\eta} \int_{\R^n_y} |(V_{\hat{\sigma}} \phi)(y,\eta)| \langle k-\eta \rangle^s \, dy \, \langle \eta \rangle^s \| (V_\phi f)(\cdot,k-\eta) \|_{L^p} \, d\eta \\
				& \leq & C_2 \sup_{k\in \Z^n} \left( \int_{\R^n_\eta} \int_{\R^n_y}  \langle k-\eta \rangle^s |(V_{\hat{\sigma}} \phi)(y,\eta)| \, dy \, d\eta \right) \Big\| \langle \eta \rangle^s \| (V_\phi f)(x,k-\eta) \|_{L_x^p} \Big\|_{L^\infty_\eta} \\
				& \leq & C_3 \sup_{\eta\in\R^n} \langle \eta \rangle^s \|(V_\phi f)(\cdot,\eta) \|_{L^p} \\
				& = & C_3 \|f \|_{\mathring{M}^{p, \infty}_{s}},
		\end{eqnarray*}
		where $f\in \mathring{M}^{p, \infty}_{s}(\R^n)$. Thus, we also have $\mathring{M}^{p, \infty}_{s} \subset M^{p,\infty}_{s}$ if $p>1$. \\
		At this point only the limit cases $p=1$, $q=\infty$ and $p=\infty$, $q=1$ are left, respectively. The main tool in order to prove it is Proposition 1.4 (3) in \cite{toftConv}. It holds
		\begin{equation} \label{dual1}
			\|f\|_{M^{1,\infty}_s} \lesssim \sup_{\substack{g\in M^{\infty,1}_{-s}, \\ \|g\|_{M^{\infty,1}_{-s}} \leq 1}} |(f,g)_{L^2}|
		\end{equation}
		and
		\begin{equation} \label{dual2}
			\|f\|_{M^{\infty,1}_s} \lesssim \sup_{\substack{g\in M^{1,\infty}_{-s}, \\ \|g\|_{M^{1,\infty}_{-s}} \leq 1}} |(f,g)_{L^2}|
		\end{equation}
		which can be justified by a construction of sequences of admissible functions. In fact this yields that the proposition holds for all $p,q\in [1,\infty]$ since the inclusions $\mathring{M}^{p, \infty}_{s} \subset M^{p, \infty}_{s}$ and $\mathring{M}^{\infty,q}_{s} \subset M^{\infty,q}_{s}$ can be extended to $p=1$ and $q=1$ without any problems, respectively. The opposite inclusions were shown by duality. By (\ref{dual1}) and (\ref{dual2}) we can apply the same arguments as above for the limit cases. This completes the proof of the proposition. \\
	\end{proof}
	At this point we obtained two equivalent definitions of modulation spaces of the type $M^{p,q}_s$. These are indeed helpful since both of them can be used to reveal different properties. 
	
	\section{Properties of Modulation Spaces}
		Strongly connected to modulation spaces is the theory of weighted mixed norm Lebesgue spaces which we already used in the proof of Proposition \ref{normequivalence} without actually defining them. Hence, we introduce these spaces in order to show properties of modulation spaces.
	\begin{df} 
		Let $1\leq p,q\leq \infty$ be the integrability parameters and $s,\sigma$ be real numbers. Then the weighted mixed-norm space of all \emph{Lebesgue measurable functions} $F$ on $\R^{2n}$ is denoted by $L^{p,q}_{s,\sigma}(\R^{2n})$ and consists of all $F$ such that its norm
		\[ \|F\|_{L^{p,q}_{s,\sigma}(\R^{2n})} = \left( \int_{\R^n} \left( \int_{\R^n} |F(x,\xi)\langle x\rangle^\sigma \langle \xi\rangle^s|^p dx \right)^{\frac{q}{p}} d\xi \right)^{\frac{1}{q}} \]
		is finite. 
	\end{df}
	\begin{rem}
		Analogously to the remark of Definition \ref{modCont}, all notations apply to weighted Lebesgue spaces $L^{p,q}_{s,\sigma}(\R^n)$. 
	\end{rem}
		Properties of the spaces $L^{p,q}_{s,\sigma}(\R^n)$ were investigated in \cite{benedek}. In fact it is a Banach space and we know the dual space for $p,q\in [1,\infty)$, namely the dual space $(L^{p,q}_{s,\sigma})^*$ of $L^{p,q}_{s,\sigma}$ is given by $(L^{p,q}_{s,\sigma})^*=L^{p',q'}_{-s,-\sigma}$, where $p'$ and $q'$ denote the corresponding conjugated exponents, respectively. From these observations we can deduce some fundamental properties of weighted modulation spaces which were already revealed in \cite{groechenig}. For a more comprehensive understanding of some of the subsequent results we can refer to \cite{reich}, where proofs or respectively their ideas are shown. \\
		First of all we state the direct connection between modulation spaces and Lebesgue spaces in the following proposition. 
		\begin{thm} \label{MpqLpq}
			Let $\phi\in \S(\R^n)$ be a fixed non-zero window function. Then the function $f$ belongs to the modulation space $\mathring{M}^{p,q}_{s,\sigma}(\R^n)$ if and only if $V_\phi f\in L^{p,q}_{s,\sigma}(\R^{2n})$ for $1\leq p,q\leq \infty$ and $s,\sigma\in\R$. Moreover, different window functions yield equivalent norms.
		\end{thm}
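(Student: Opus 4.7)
The first assertion is essentially tautological: Definition \ref{modCont} sets
\[
\|f\|_{\mathring{M}^{p,q}_{s,\sigma}(\R^n)} = \|V_\phi f\|_{L^{p,q}_{s,\sigma}(\R^{2n})},
\]
so $f\in\S'(\R^n)$ lies in $\mathring{M}^{p,q}_{s,\sigma}(\R^n)$ precisely when $V_\phi f\in L^{p,q}_{s,\sigma}(\R^{2n})$. The substantive content of the theorem is the window independence, and that is where I would concentrate the work.

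To compare two admissible windows $\phi_1,\phi_2\in\S(\R^n)\setminus\{0\}$, I would invoke Lemma \ref{twisted} with $\phi_0=\phi_2$, $\phi=\phi_1$, and $\gamma=\phi_1$, so that $(\gamma,\phi)_{L^2}=\|\phi_1\|_{L^2}^2>0$. The lemma then yields the pointwise control
\[
|V_{\phi_2} f(x,\xi)| \;\leq\; C_{\phi_1}\,\bigl(|V_{\phi_1} f|\ast|V_{\phi_2}\phi_1|\bigr)(x,\xi) \qquad \text{on } \R^{2n}.
\]
Since $\phi_1,\phi_2\in\S(\R^n)$, Proposition \ref{STFTdecay} gives $V_{\phi_2}\phi_1\in\S(\R^{2n})$, so the convolution kernel decays faster than any polynomial on the phase space. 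By symmetry, the reverse pointwise estimate holds with $\phi_1,\phi_2$ interchanged, so it suffices to show that convolution with a Schwartz kernel is bounded on $L^{p,q}_{s,\sigma}(\R^{2n})$.

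The remaining step is the weighted convolution estimate. I would apply Peetre's inequality in each variable separately,
\[
\langle x\rangle^\sigma \leq 2^{|\sigma|/2}\langle x-y\rangle^\sigma\langle y\rangle^{|\sigma|}, \qquad \langle\xi\rangle^s\leq 2^{|s|/2}\langle\xi-\eta\rangle^s\langle\eta\rangle^{|s|},
\]
to transfer the weight $\langle x\rangle^\sigma\langle\xi\rangle^s$ from the outer variables of the convolution onto the factor $|V_{\phi_1}f|$, picking up the cost $\langle y\rangle^{|\sigma|}\langle\eta\rangle^{|s|}|V_{\phi_2}\phi_1(y,\eta)|$. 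Because $V_{\phi_2}\phi_1$ is Schwartz, this reweighted kernel is still Schwartz, hence lies in $L^1(\R^{2n})$. Young's inequality for mixed-norm Lebesgue spaces (which reduces to Minkowski's integral inequality applied iteratively in $x$ and then in $\xi$) then gives
\[
\|V_{\phi_2} f\|_{L^{p,q}_{s,\sigma}} \;\leq\; C\,\|V_{\phi_1} f\|_{L^{p,q}_{s,\sigma}},
\]
with $C$ depending on $\phi_1,\phi_2,s,\sigma$ but not on $f$. This establishes the equivalence of norms for all $1\leq p,q\leq\infty$ and all $s,\sigma\in\R$.

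The main obstacle is the combined weight handling: one must verify that Peetre's inequality cleanly absorbs the polynomial weights into the Schwartz kernel without inflating $|V_{\phi_1}f|$ by anything uncontrolled, and that Young's inequality genuinely extends to the mixed-norm weighted setting (which is where Minkowski in each coordinate is invoked). Once these bookkeeping issues are dispatched, the rest is automatic, including the endpoint cases $p=\infty$ or $q=\infty$ for which the same pointwise estimate and a standard $L^\infty$ modification of Young's inequality suffice.
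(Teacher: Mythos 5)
Your proof is correct and is essentially the argument behind the paper's citation: the paper itself only refers to Gr\"ochenig for this theorem, and the proof there proceeds exactly as you do, via the pointwise domination of Lemma \ref{twisted} with a Schwartz cross-term $V_{\phi_2}\phi_1$, Peetre's inequality to show the polynomial weight is moderate, and weighted Young's inequality on the mixed-norm space. No gaps; the identification of the fixed-window statement as tautological and of window independence as the real content is also accurate.
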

		Furthermore the subsequent results can be proved. 
		\begin{thm} \label{MpqBanach}
			Assume $s,\sigma\in\R$ and the integrability parameters $1\leq p,q\leq \infty$. Then the modulation space $\mathring{M}^{p,q}_{s,\sigma}(\R^n)$ is a Banach space.
		\end{thm}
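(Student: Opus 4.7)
The plan is to reduce everything to the fact that $L^{p,q}_{s,\sigma}(\R^{2n})$ is a Banach space (a result of Benedek--Panzone already cited in the excerpt) by exploiting the identification $f \leftrightarrow V_\phi f$ provided by Theorem \ref{MpqLpq}. The argument splits into checking the norm axioms and then checking completeness.

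For the norm axioms, I would fix a window $\phi\in\S(\R^n)\setminus\{0\}$. Linearity of $V_\phi$ in $f$ combined with the fact that $\|\cdot\|_{L^{p,q}_{s,\sigma}}$ is a norm yields homogeneity and the triangle inequality at once. Definiteness follows from the injectivity of the STFT: if $\|f\|_{\mathring{M}^{p,q}_{s,\sigma}}=0$, then $V_\phi f=0$ almost everywhere on $\R^{2n}$, hence identically by the continuity statement in Proposition \ref{STFTtemp}, and then the inversion formula (Theorem \ref{inversion}, extended to $\S'$ as noted in the remark following it) forces $f=0$ in $\S'(\R^n)$.

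For completeness, let $\{f_k\}\subset\mathring{M}^{p,q}_{s,\sigma}(\R^n)$ be Cauchy. Then $\{V_\phi f_k\}$ is Cauchy in $L^{p,q}_{s,\sigma}(\R^{2n})$, so there exists $F\in L^{p,q}_{s,\sigma}(\R^{2n})$ with $V_\phi f_k\to F$ in norm. To produce a candidate limit $f\in\S'(\R^n)$, I would choose an auxiliary $\gamma\in\S(\R^n)$ with $(\phi,\gamma)_{L^2}\neq 0$ and use the adjoint identity (\ref{adjointId}) to write $f_k=\frac{1}{(\phi,\gamma)_{L^2}}V_\gamma^* V_\phi f_k$. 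Defining tentatively $f:=\frac{1}{(\phi,\gamma)_{L^2}}V_\gamma^* F$, I would verify that $V_\gamma^*$ maps $L^{p,q}_{s,\sigma}(\R^{2n})$ continuously into $\S'(\R^n)$: for a test function $\psi\in\S(\R^n)$, pairing $\langle V_\gamma^* F,\psi\rangle$ unfolds to an integral of $F$ against $V_\gamma\psi$, and by Proposition \ref{STFTdecay} the function $V_\gamma\psi$ lies in $\S(\R^{2n})$ and in particular in the dual space $L^{p',q'}_{-s,-\sigma}(\R^{2n})$. Hence $f\in\S'(\R^n)$ and $f_k\to f$ in $\S'$. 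Continuity of the STFT on $\S'$ (Proposition \ref{STFTtemp}) then gives $V_\phi f_k\to V_\phi f$ in $\S'(\R^{2n})$, while the $L^{p,q}_{s,\sigma}$-convergence also yields $V_\phi f_k\to F$ in $\S'(\R^{2n})$ via the same duality embedding. Uniqueness of the limit in $\S'$ forces $V_\phi f=F\in L^{p,q}_{s,\sigma}(\R^{2n})$, so $f\in\mathring{M}^{p,q}_{s,\sigma}(\R^n)$ and $\|f_k-f\|_{\mathring{M}^{p,q}_{s,\sigma}}=\|V_\phi f_k-F\|_{L^{p,q}_{s,\sigma}}\to 0$.

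The main obstacle is the technical step of exhibiting the continuous embedding $L^{p,q}_{s,\sigma}(\R^{2n})\hookrightarrow\S'(\R^{2n})$ and justifying that $V_\gamma^*$ lands in $\S'(\R^n)$ when applied to an element of $L^{p,q}_{s,\sigma}$; everything else is either a formal consequence of the isometric-type identification of Theorem \ref{MpqLpq} or a routine invocation of the Banach property of mixed-norm spaces. The limit cases $p=\infty$ or $q=\infty$ require no separate treatment here, since only the Banach property of $L^{p,q}_{s,\sigma}$ and the duality with $L^{p',q'}_{-s,-\sigma}$ (interpreted with the appropriate $\esssup$) are used.
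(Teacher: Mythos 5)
Your argument is correct and is essentially the standard proof the paper defers to (it states the theorem without proof, pointing to \cite{groechenig}): one identifies $\mathring{M}^{p,q}_{s,\sigma}$ as a retract of the Banach space $L^{p,q}_{s,\sigma}(\R^{2n})$ via $V_\phi$ and $V_\gamma^*$, using the inversion identity (\ref{adjointId}) to recover the limit and the embedding $L^{p,q}_{s,\sigma}(\R^{2n})\hookrightarrow\S'(\R^{2n})$ (by H\"older against $V_\gamma\psi\in\S(\R^{2n})$, Proposition \ref{STFTdecay}) to show the limit $F$ is again of the form $V_\phi f$. The technical step you flag is exactly the content of the corresponding lemma in \cite{groechenig} and goes through as you describe, including in the endpoint cases $p=\infty$ or $q=\infty$ since only $\S(\R^{2n})\subset L^{p',q'}_{-s,-\sigma}$ is needed rather than genuine duality.
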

		\begin{thm} \label{Mduality}
			Let $s,\sigma\in\R$ and $p,q\in [1,\infty)$ with $p',q'$ such that $\frac{1}{p}+\frac{1}{p'}=\frac{1}{q}+\frac{1}{q'}=1$. Then $(\mathring{M}^{p,q}_{s,\sigma})^*=\mathring{M}^{p',q'}_{-s,-\sigma}$, where the duality is given by
			\begin{equation} \label{MpqDuality}
				( f, g )_{L^2} = \iint_{\R^{2n}} V_\phi f(x,\xi) \overline{V_\phi g(x,\xi)} \,dx\, d\xi 
			\end{equation}
			for $f\in \mathring{M}^{p,q}_{s,\sigma}(\R^n)$ and $g\in \mathring{M}^{p',q'}_{-s,-\sigma}(\R^n)$.
		\end{thm}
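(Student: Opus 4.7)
The plan is to transfer the known duality of the weighted mixed-norm Lebesgue space $L^{p,q}_{s,\sigma}(\R^{2n})$ to the modulation space via the short-time Fourier transform. By Theorem \ref{MpqLpq}, fixing a non-zero Schwartz window $\phi$ normalized so that $\|\phi\|_{L^2}=1$, the map $V_\phi$ realizes $\mathring{M}^{p,q}_{s,\sigma}(\R^n)$ as a closed subspace of $L^{p,q}_{s,\sigma}(\R^{2n})$ with equivalent norms; since different windows yield equivalent norms, the normalization is free. The result from \cite{benedek} that $(L^{p,q}_{s,\sigma})^*=L^{p',q'}_{-s,-\sigma}$ for $p,q\in[1,\infty)$ will supply all the ``Lebesgue-level'' duality, and the adjoint operator $V_\phi^*$ of equation (\ref{adjointId}) will transport things back to the modulation space.

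First I would show that the pairing (\ref{MpqDuality}) defines a bounded sesquilinear form on $\mathring{M}^{p,q}_{s,\sigma}\times\mathring{M}^{p',q'}_{-s,-\sigma}$. For Schwartz $f,g$, polarising the orthogonality relation of Proposition \ref{orthogonal} gives $(f,g)_{L^2}=\iint V_\phi f\,\overline{V_\phi g}\,dx\,d\xi$, and applying Hölder's inequality in the mixed-norm $L^{p,q}_{s,\sigma}$ yields
\[
|(f,g)_{L^2}|\le \|V_\phi f\|_{L^{p,q}_{s,\sigma}}\|V_\phi g\|_{L^{p',q'}_{-s,-\sigma}} \lesssim \|f\|_{\mathring{M}^{p,q}_{s,\sigma}}\|g\|_{\mathring{M}^{p',q'}_{-s,-\sigma}}.
\]
Because $p,q<\infty$, Schwartz functions are dense in both modulation spaces, so the estimate extends by continuity, and each $g\in\mathring{M}^{p',q'}_{-s,-\sigma}$ induces a bounded functional $\Lambda_g$ on $\mathring{M}^{p,q}_{s,\sigma}$.

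For surjectivity, given $\Lambda\in(\mathring{M}^{p,q}_{s,\sigma})^*$, I would define $\widetilde\Lambda$ on the closed subspace $V_\phi(\mathring{M}^{p,q}_{s,\sigma})\subset L^{p,q}_{s,\sigma}(\R^{2n})$ by $\widetilde\Lambda(V_\phi f):=\Lambda(f)$, which is well defined and bounded since $V_\phi$ is bounded below. Extending by Hahn--Banach to all of $L^{p,q}_{s,\sigma}$ and invoking the Benedek--Panzone duality, I obtain $H\in L^{p',q'}_{-s,-\sigma}(\R^{2n})$ with $\Lambda(f)=\iint V_\phi f\,\overline{H}\,dx\,d\xi$. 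Setting $g:=V_\phi^* H$, the reconstruction identity $V_\phi^* V_\phi=I$ (which is (\ref{adjointId}) with $\gamma=\phi$ and the chosen normalization) forces $\Lambda(f)=(f,V_\phi^* H)_{L^2}=(f,g)_{L^2}$ on the Schwartz class, hence on all of $\mathring{M}^{p,q}_{s,\sigma}$ by density. Injectivity of $g\mapsto\Lambda_g$ is automatic: if $(f,g)_{L^2}=0$ for all Schwartz $f$, then $g=0$ as a tempered distribution.

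The main obstacle is the remaining verification that this reconstructed $g$ actually lies in $\mathring{M}^{p',q'}_{-s,-\sigma}$, i.e. that the synthesis operator $V_\phi^*$ maps $L^{p',q'}_{-s,-\sigma}(\R^{2n})$ boundedly into $\mathring{M}^{p',q'}_{-s,-\sigma}(\R^n)$. For this I would compute, using the covariance of the STFT,
\[
V_\phi(V_\phi^* H)(y,\eta)=(2\pi)^{-n/2}\iint H(x,\xi)\,e^{-\i x\cdot(\eta-\xi)}\,V_\phi\phi(y-x,\eta-\xi)\,dx\,d\xi,
\]
which is pointwise dominated by $(2\pi)^{-n/2}(|H|*|V_\phi\phi|)(y,\eta)$. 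Since $V_\phi\phi\in\S(\R^{2n})$ by Proposition \ref{STFTdecay}, it decays faster than any polynomial, and a weighted Young-type convolution inequality in the mixed-norm space $L^{p',q'}_{-s,-\sigma}$ (exactly the mechanism underlying Lemma \ref{twisted}) absorbs the weight shifts $\langle y\rangle^{\sigma}\langle\eta\rangle^s$ versus $\langle x\rangle^\sigma\langle\xi\rangle^s$ via the standard submultiplicativity $\langle y\rangle^{\sigma}\lesssim\langle y-x\rangle^{|\sigma|}\langle x\rangle^{\sigma}$. This produces $\|V_\phi g\|_{L^{p',q'}_{-s,-\sigma}}\lesssim \|H\|_{L^{p',q'}_{-s,-\sigma}}$, closing the argument. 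The technical heart is precisely this weighted convolution bound for the synthesis operator; once it is in hand, everything else is formal duality.
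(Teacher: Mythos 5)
Your argument is correct and is essentially the standard proof of this duality (Gr\"ochenig's Theorem 11.3.6), which is exactly what the paper relies on by citation rather than proving itself: identify $\mathring{M}^{p,q}_{s,\sigma}$ with a closed subspace of $L^{p,q}_{s,\sigma}(\R^{2n})$ via $V_\phi$, invoke the Benedek--Panzone duality, extend by Hahn--Banach, and pull back with the synthesis operator $V_\phi^*$, whose boundedness rests on the pointwise bound $|V_\phi(V_\phi^*H)|\lesssim |H|\ast|V_\phi\phi|$ plus a weighted Young inequality for the polynomially moderate weight $\langle x\rangle^{\sigma}\langle\xi\rangle^{s}$. You have correctly isolated that convolution estimate as the technical heart, and no step is missing.
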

		In Definition \ref{modCont} we defined scales of modulation spaces. We can describe their relations in more detail. 		
		\begin{prop} \label{ModInclusion}
			Let $p_1,p_2,q_1,q_2\in [1,\infty]$ such that $p_1\leq p_2$ and $q_1\leq q_2$. Additionally assume $s_1,s_2,\sigma_1,\sigma_2 \in \R$ to be real numbers, where $s_2\leq s_1$ and $\sigma_2\leq \sigma_1$. Then it holds:
			\begin{enumerate}
				\item the inclusions  $\S(\R^n) \subset \mathring{M}^{p_1,q_1}_{s_1,\sigma_1}(\R^n) \subset \mathring{M}^{p_2,q_2}_{s_2,\sigma_2}(\R^n) \subset \S'(\R^n)$;
				\item if $p, q \in [1,\infty)$ and $s,\sigma\in \R$, then $\S(\R^n)$ is dense in $\mathring{M}^{p,q}_{s,\sigma}(\R^n)$;
				\item additionally to the assumptions above suppose $q_2<\infty$, then $\mathring{M}^{\infty, q_1}_s(\R^n)$ is dense in $\mathring{M}^{\infty, q_2}_s(\R^n)$. However the opposite statement is not true, i.e., taking $p_1 \leq p_2 < \infty$ we do not have a dense inclusion $\mathring{M}^{p_1, \infty}_s(\R^n) \subset \mathring{M}^{p_2,\infty}_s(\R^n)$. 
			\end{enumerate}
		\end{prop}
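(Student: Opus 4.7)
The plan is to prove the three items separately, with the second and third resting on truncation arguments and the first carrying the main technical content.

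For the first item, the outer inclusions follow quickly. The embedding $\S(\R^n)\subset\mathring{M}^{p_1,q_1}_{s_1,\sigma_1}(\R^n)$ is immediate from Proposition \ref{STFTdecay}: for $f\in\S$ one has $V_\phi f\in\S(\R^{2n})$, so $|V_\phi f(x,\xi)|\langle x\rangle^{\sigma_1}\langle\xi\rangle^{s_1}$ decays faster than any polynomial and the weighted mixed-norm is finite, which also gives continuity with respect to the Schwartz seminorms. The final inclusion $\mathring{M}^{p_2,q_2}_{s_2,\sigma_2}\subset\S'(\R^n)$ is tautological as sets by Definition \ref{modCont}; to obtain the continuous embedding, I would test against $\psi\in\S$ through the pairing
\[ \langle f,\psi\rangle = \tfrac{1}{(\phi,\gamma)_{L^2}}\iint_{\R^{2n}} V_\phi f(x,\xi)\,\overline{V_\gamma\psi(x,\xi)}\,dx\,d\xi \]
given by the inversion formula of Theorem \ref{inversion}, and bound it by H\"older's inequality between $L^{p_2,q_2}_{s_2,\sigma_2}$ and $L^{p_2',q_2'}_{-s_2,-\sigma_2}$, using that $V_\gamma\psi\in\S(\R^{2n})$ makes the dual factor finite.

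The main work is the middle embedding $\mathring{M}^{p_1,q_1}_{s_1,\sigma_1}\subset\mathring{M}^{p_2,q_2}_{s_2,\sigma_2}$. Since $\langle x\rangle,\langle\xi\rangle\geq 1$, the hypotheses $\sigma_2\leq\sigma_1$ and $s_2\leq s_1$ yield $\langle x\rangle^{\sigma_2}\langle\xi\rangle^{s_2}\leq\langle x\rangle^{\sigma_1}\langle\xi\rangle^{s_1}$ pointwise, so the weight parameters may be equalised for free. For the passage $(p_1,q_1)\to(p_2,q_2)$ I would apply Lemma \ref{twisted} with $\phi_0=\gamma=\phi$ to obtain $|V_\phi f|\lesssim|V_\phi f|\ast|V_\phi\phi|$ on $\R^{2n}$, with $V_\phi\phi\in\S(\R^{2n})$, then absorb the weight via Peetre's inequality
\[ \langle x\rangle^{\sigma}\langle\xi\rangle^{s}\lesssim\langle x-y\rangle^{|\sigma|}\langle\xi-\eta\rangle^{|s|}\langle y\rangle^{\sigma}\langle\eta\rangle^{s} \]
inside the convolution, and finally apply the mixed-norm Young's inequality with exponents $\tfrac{1}{r}=1+\tfrac{1}{p_2}-\tfrac{1}{p_1}$ and $\tfrac{1}{t}=1+\tfrac{1}{q_2}-\tfrac{1}{q_1}$, both of which lie in $[0,1]$ under the hypotheses. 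The resulting kernel factor is a polynomially weighted Schwartz function, hence finite in every mixed Lebesgue norm.

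For the second item, given $f\in\mathring{M}^{p,q}_{s,\sigma}$ with $p,q<\infty$, Theorem \ref{MpqLpq} identifies $V_\phi f\in L^{p,q}_{s,\sigma}(\R^{2n})$; truncating by the characteristic function of a ball $B_R\subset\R^{2n}$ and invoking dominated convergence in the finite-exponent weighted Lebesgue space gives a norm-convergent approximation, which after resynthesis via $V_\gamma^\ast$ and mollification in $\R^{2n}$ produces Schwartz approximants. For the third item, the positive claim is the same truncation argument applied only in the $\xi$-variable, valid because the outer integral is a $q_2$-th power with $q_2<\infty$. The negative statement reflects the fact that $c_0$ is not dense in $\ell^\infty$: a concrete construction, such as a suitable lacunary sum of modulated bumps, yields $f\in\mathring{M}^{p_2,\infty}_s$ whose profile $\sup_x|V_\phi f(x,\xi)|\langle\xi\rangle^s$ does not vanish as $|\xi|\to\infty$, whereas any norm-limit from $\mathring{M}^{p_1,\infty}_s$ would inherit such decay. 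The main technical hurdle is the middle embedding in the first item: coordinating Lemma \ref{twisted}, Peetre's inequality and the mixed-norm Young inequality is routine for interior exponents, but at the endpoints $p_i,q_i\in\{1,\infty\}$ the Young constraints tighten and the outer density arguments must be revisited more carefully.
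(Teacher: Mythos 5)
Your handling of items 1 and 2 and of the positive half of item 3 is essentially correct and amounts to the continuous-STFT version of what the paper does: the paper delegates items 1 and 2 to Gr\"ochenig (Theorem 12.2.2 and Proposition 11.3.4), whose proofs are precisely your combination of the domination $|V_\phi f|\lesssim |V_\phi f|\ast|V_\phi\phi|$ from Lemma \ref{twisted}, Peetre's inequality and the mixed-norm Young inequality; and where the paper proves the density $\mathring{M}^{\infty,q_1}_s\subset\mathring{M}^{\infty,q_2}_s$ by truncating Gabor coefficients in the frequency index, you truncate $V_\phi f$ in $\xi$ and resynthesize with $V_\gamma^{\ast}$, which is an equivalent route. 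Your closing worry about endpoints is unfounded: with $\tfrac1r=1+\tfrac1{p_2}-\tfrac1{p_1}$ and $\tfrac1t=1+\tfrac1{q_2}-\tfrac1{q_1}$ the mixed-norm Young inequality is valid for all $p_i,q_i\in[1,\infty]$, and item 1 is a norm inequality needing no density argument.

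The genuine gap is the negative half of item 3. Both $\mathring{M}^{p_1,\infty}_s$ and $\mathring{M}^{p_2,\infty}_s$ carry the outer exponent $q=\infty$; only the inner (position) exponent changes. Your proposed obstruction --- a frequency profile that does not vanish as $|\xi|\to\infty$, by analogy with $c_0$ versus $l^\infty$ --- does not separate the two spaces, because elements of the \emph{smaller} space $\mathring{M}^{p_1,\infty}_s$ need not have vanishing profiles either: a function whose Gabor coefficients are $c_{j,k}=\delta_{j,0}$ lies in $\mathring{M}^{p_1,\infty}_s$ for every $p_1$ (when $s\le 0$, say) and its profile is bounded below along the frequency lattice. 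Hence the claim that ``any norm-limit from $\mathring{M}^{p_1,\infty}_s$ would inherit such decay'' is false, and your example excludes nothing; note also that the profile relevant to the ambient norm is $\|V_\phi f(\cdot,\xi)\|_{L^{p_2}}\langle\xi\rangle^s$, not a supremum in $x$. Since $L^{p_1}\cap L^{p_2}$ is dense in $L^{p_2}$ for each fixed $\xi$, the obstruction can only be a failure of uniformity in $\xi$: one needs $V_\phi f(\cdot,\xi)$ to spread out in $x$ as $|\xi|\to\infty$. In Gabor terms, for $s=0$ and $p_1=1<p_2=2$, take $c_{j,k}=M_k^{-1/2}$ for $1\le |j|\le M_k$ and $0$ otherwise, with $M_k\to\infty$; if $\sup_k\|d_{\cdot,k}\|_{l^1}=A<\infty$, then Cauchy--Schwarz gives $\|c_{\cdot,k}-d_{\cdot,k}\|_{l^2}\ge 1-AM_k^{-1/2}$, so the distance from $c$ to $l^{1,\infty}$ in $l^{2,\infty}$ is at least $1$. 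For comparison, the paper's own counterexample at least attacks the correct (position) index, truncating Gabor coefficients in $j$ and exhibiting $c_{j,k}=\varphi(j-k)$; be aware, though, that that sequence itself already belongs to $l^{p_1,\infty}$, so the published argument likewise only shows that one particular approximation scheme fails, and a construction of the above type is what is really needed.
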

		\begin{proof}
			For the proofs of $1.$ and $2.$ see Theorem 12.2.2 and Proposition 11.3.4 in \cite{groechenig}. \\
			In order to prove $3.$ we mainly stress Gabor expansion which is explained in \cite{groechenig}. Assume $\phi$ and $\psi$ to be admissible window functions. Moreover, we introduce the following notation
		\[ \psi_{j,k} := M_{\xi_k} T_{x_j} \psi = e^{\i \xi_k x} \psi(x-x_j). \]
		Then Corollary 12.2.6 in \cite{groechenig} yields 
		\begin{eqnarray*}
			f & = & \sum_{j,k \in \Z^n} \langle f, M_{\xi_k} T_{x_j} \phi \rangle M_{\xi_k} T_{x_j} \psi \\
				& = &  \sum_{j,k \in \Z^n} (V_\phi f)(x_j,\xi_k) \psi_{j,k}
		\end{eqnarray*}
		with unconditional convergence for $p,q<\infty$ and weak$^*$ convergence in the limit case $p,q=\infty$. This result is sufficient for our considerations since we are only treating distributions anyway. By the same corollary we obtain equivalence of norms, i.e., there exists constants $C_1, C_2 >0$ such that
		\begin{equation} \label{gaborEquiv}
			C_1 \| f\|_{\mathring{M}^{p,q}_s} \leq \|(V_\phi f)(x_j, \xi_k) \|_{l^{p,q}_s} \leq C_2 \|f\|_{\mathring{M}^{p,q}_s}. 
		\end{equation}
		So let $q_1\leq q_2 <\infty$ and $f\in \mathring{M}^{\infty, q_2}$. For a number $N\in \N$ we define a function $f_N$ by
		\[ f_N = \sum_{k\in \Z^n} \sum_{j\in \Z^n} \chi_N(k) (V_\phi f)(x_j, \xi_k) \psi_{j,k}, \]
		where $\chi_N$ is the characteristic function with respect to $k$, i.e.,
		\[ \chi_N(k) = \begin{cases}
					1, & |k|\leq N \\
					0, & |k| > N.
				\end{cases} 
		\]
		Since the coefficient $\chi_N(k) (V_\phi f)(x_j, \xi_k)$ of $f_N$ can be considered as a finite sequence with respect to $k$ we know that $\{\chi_N(k) (V_\phi f)(x_j, \xi_k)\}_{k\in\Z^n} \in l^q_s$ for any $q \in [1, \infty)$, in particular for $q_1$. Hence, it also holds 
		\[ \{\chi_N(k) (V_\phi f)(x_j, \xi_k)\}_{j,k\in\Z^n} \in l^{\infty, q_1}_s. \]
		Using the previous considerations we compute 
		\begin{eqnarray*}
			\| f- f_N\|_{\mathring{M}^{\infty, q_2}_s} & \leq & C_1 \| (V_\phi f)(x_j,\xi_k) - \chi_N(k) (V_\phi f)(x_j, \xi_k) \|_{l^{\infty, q_2}_s} \\
				& = & \| \langle k \rangle^s \sup_{j\in \Z^n} | (V_\phi f)(x_j,\xi_k) - \chi_N(k) (V_\phi f)(x_j, \xi_k) | \|_{l^{q_2}} \\
				& = & \left( \sum_{k\in\Z^n} \langle k \rangle^{q_2} \sup_{j\in \Z^n} | (V_\phi f)(x_j,\xi_k) - \chi_N(k) (V_\phi f)(x_j, \xi_k) |^{q_2} \right)^{\frac{1}{q_2}} \\
				& \to & 0 
		\end{eqnarray*}
		as $N\to \infty$. Hence, the first statement is proved. \\
		The inclusion $\mathring{M}^{p_1, \infty}_s (\R^n) \subset \mathring{M}^{p_2,\infty}_s (\R^n)$ follows by Theorem 12.2.2 in \cite{groechenig}. Using the same techniques as in the first part of the proof we obtain the sequence of coefficients $\{ \chi_N(j) (V_\phi f)(x_j, \xi_k)\}_{j,k\in\Z^n}$ of $f_N$ which is a finite sequence with respect to $j$ and therefore belongs to $l^{p_1,\infty}_s$. Defining now the function
		\[ f_N = \sum_{k\in \Z^n} \sum_{j\in \Z^n} \chi_N(j) (V_\phi f)(x_j, \xi_k) \psi_{j,k} \quad \in \mathring{M}^{p_1,\infty}_s \]
		yields the following computation
		\begin{eqnarray*}
			\| f- f_N\|_{\mathring{M}^{p_2, \infty}_s} & \leq & C_1 \| (V_\phi f)(x_j,\xi_k) - \chi_N(j) (V_\phi f)(x_j, \xi_k) \|_{l^{p_2, \infty}_s} \\
				& = & \sup_{k\in\Z^n} \left( \langle k \rangle^s \| (V_\phi f)(x_j,\xi_k) - \chi_N(j) (V_\phi f)(x_j, \xi_k) \|_{l^{p_2}} \right) \\
				& = & \sup_{k\in\Z^n} \Bigg( \langle k \rangle^s \Big( \sum_{j\in\Z^n} |(V_\phi f)(x_j,\xi_k) - \chi_N(j) (V_\phi f)(x_j, \xi_k)|^{p_2} \Big)^{\frac{1}{p_2}} \Bigg) \\
				& = & \sup_{k\in\Z^n} \Bigg( \langle k \rangle^s \Big( \sum_{\substack{j\in\Z^n, \\ |j|>N}} |(V_\phi f)(x_j,\xi_k)|^{p_2} \Big)^{\frac{1}{p_2}} \Bigg).
		\end{eqnarray*}
		Now we will establish a counterexample, i.e., we construct a sequence $\{c_{j,k}\}_{j,k\in\Z^n}$ satisfying
		\begin{equation} \label{normCondition} 
			\sup_{k\in\Z^n} \Bigg( \langle k \rangle^s \sum_{j\in\Z^n} |c_{j,k}|^{p_2} \Bigg)^{\frac{1}{p_2}} < \infty 
		\end{equation}
		but 
		\begin{equation} \label{counterexample}
			\sup_{k\in\Z^n} \Bigg( \langle k \rangle^s \sum_{\substack{j\in\Z^n, \\ |j|>N}} |c_{j,k}|^{p_2} \Bigg)^{\frac{1}{p_2}} 
		\end{equation}
		does not tend to zero as $N\to \infty$. Let $N_0$ be a fixed positive integer and $\varphi$ be a positive bounded function such that $\supp \varphi \subset \{ x\in \R^n : |x|\leq N_0\}$. Now set $c_{j,k} = \varphi(j-k)$. Obviously (\ref{normCondition}) is fulfilled but we see that for every $N$ we can choose $k$ such that $N\leq |k|\leq N+N_0$. Thus, the term (\ref{counterexample}) never gets arbitrarily small. Hence $\mathring{M}^{p_1, \infty}_s (\R^n)$ is not dense in $\mathring{M}^{p_2, \infty}_s (\R^n)$ which completes the proof.
		\end{proof}
		\begin{rem}
			The Gabor analysis of modulation spaces as shown in \cite{groechenig} is a very strong tool. And at this point we only use it as a tool in order to prove some basic results about characterizations of modulation spaces. We do not present any theory of Gabor analysis. \\
			Note that an immediate consequence of the latter proposition is that the standard modulation space increases with its integrability parameters $p$ and $q$. 
		\end{rem}
		As used in the proof of Proposition \ref{ModInclusion} we have an appropriate characterization of modulation space functions in terms of Gabor expansion. Note that in general we consider elements of modulation spaces as distributions. Now we want to deduce a characterization result for periodic modulation space functions. Recall that admissible periodic functions can be represented by their corresponding Fourier series. It is shown in \cite{hormander} that we can find a representation of periodic distributions in terms of a sum namely Poisson's summation formula for distributions. Let $u\in \S'(\R^n)$ be periodic such that 
	\[ u(x-\alpha) = u(x), \qquad \alpha\in\Z^n \]
	and suppose $\phi\in\S(\R^n)$ to be a function satisfying
	\[ \sum_{\alpha\in\Z^n} \phi(x-\alpha) = 1. \]
	It is known that Poisson's summation formula holds for Schwartz functions. This and Fourier's inversion formula yield 
	\[ u = \sum_{\alpha\in\Z^n} c_\alpha e^{2\pi \i x \cdot \alpha} , \]
	where the coefficients are defined by
	\begin{equation} \label{coefficients}
		c_\alpha = ( u(x), \phi(x) e^{-2\pi\i x\cdot \alpha } )_{L^2(\R^n)}.
	\end{equation}
	\begin{prop}
		Suppose $1\leq q<\infty$ and $s>1$. If $f\in\S'(\R^n)$ is a periodic tempered distribution with period $2\pi$ in each variable, then
		\[ f = \sum_{\alpha\in\Z^n} c_\alpha e^{\i x\cdot \alpha} \]
		with coefficients
		\[ c_\alpha = ( f(x), \phi(x) e^{-\i x\cdot \alpha } )_{L^2(\R^n)} \]
		similar to \eqref{coefficients}. \\
		Furthermore $f\in \mathring{M}^{\infty,q}_s(\R^n)$ if and only if $\{c_{\alpha}\}_{\alpha\in\Z^n} \in l^q_s(\Z^n)$.
	\end{prop}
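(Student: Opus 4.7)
The plan is to reduce the statement to the equivalent norm on $\mathring{M}^{\infty,q}_s$ provided by the frequency-uniform decomposition in Proposition \ref{normequivalence}, exploiting the fact that the Fourier transform of a $2\pi$-periodic tempered distribution is a sum of Dirac masses supported on the integer lattice $\Z^n$. The Fourier series representation itself is essentially a restatement of the Poisson-type formula for periodic tempered distributions quoted just before the proposition: after rescaling the auxiliary partition function so that $\sum_{\alpha\in\Z^n}\phi(x-2\pi\alpha)=1$, the argument from \cite{hormander} yields $f=\sum_\alpha c_\alpha e^{\i x\cdot\alpha}$ in $\S'(\R^n)$ with $c_\alpha=(f(x),\phi(x)e^{-\i x\cdot\alpha})_{L^2}$.

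For the equivalence of norms I would compute $\Box_k f$ directly. With the normalization of the Fourier transform used in the paper one has $\hat f=(2\pi)^{n/2}\sum_{\alpha\in\Z^n}c_\alpha \delta_\alpha$. Choosing $\rho$ in the frequency-uniform decomposition to be compactly supported strictly inside the open cube $(-1,1)^n$---which is fully compatible with the earlier requirement $\rho(\xi)=1$ for $|\xi|\leq 1/2$---one obtains $\sigma_k(\alpha)=\delta_{k,\alpha}$ for every $\alpha\in\Z^n$, hence
\[ \Box_k f(x)=\F^{-1}(\sigma_k \hat f)(x)=c_k e^{\i k\cdot x}, \qquad \|\Box_k f\|_{L^\infty(\R^n)}=|c_k|. \]
Weighting by $\langle k\rangle^{sq}$ and summing yields $\|f\|_{M^{\infty,q}_s}=\|\{c_k\}\|_{l^q_s(\Z^n)}$, which by Proposition \ref{normequivalence} is equivalent to $\|f\|_{\mathring{M}^{\infty,q}_s}$. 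This settles both directions: if $f\in\mathring{M}^{\infty,q}_s$ then $\{c_\alpha\}\in l^q_s$, and conversely, given $\{c_\alpha\}\in l^q_s$ the decay $|c_\alpha|=o(\langle\alpha\rangle^{-s})$ together with the hypothesis $s>1$ makes $\sum_\alpha c_\alpha e^{\i x\cdot\alpha}$ converge in $\S'(\R^n)$, and the identity above shows the resulting distribution belongs to $\mathring{M}^{\infty,q}_s$.

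The step I expect to require most care is the clean identification $\Box_k f=c_k e^{\i k\cdot x}$. A generic admissible $\rho$ may have $\sigma_k$ nonvanishing at neighbouring integers $\alpha=k+e_j\in Q_k$, in which case several Fourier coefficients are mixed into a single $\Box_k f$. This does not endanger the final equivalence, since the number of such overlaps is uniformly bounded in $k$ and different admissible $\rho$ produce equivalent norms on $M^{\infty,q}_s$, but it would force the clean equality above to be replaced by two-sided estimates of $\|\Box_k f\|_{L^\infty}$ in terms of $\sum_{|\alpha-k|\leq\sqrt n}|c_\alpha|$, followed by an invocation of the shift-invariance of the weighted sequence space $l^q_s$. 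Working from the outset with $\rho$ supported in the open unit cube is the shortest route and turns the equivalence of norms into an equality up to the constants of Proposition \ref{normequivalence}.
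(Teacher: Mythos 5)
Your argument is correct, but it takes a genuinely different route from the paper. You reduce everything to the frequency-uniform decomposition: since $\hat f=(2\pi)^{n/2}\sum_\alpha c_\alpha\delta_\alpha$ is supported on the lattice, a decomposition function $\rho$ supported strictly inside the open unit cube gives $\Box_k f=c_k e^{\i k\cdot x}$, hence $\|f\|_{M^{\infty,q}_s}=\|\{c_k\}\|_{l^q_s}$ exactly, and both implications then follow in one stroke from the equivalence of the discrete and continuous norms in Proposition \ref{normequivalence}. The paper instead argues each direction separately with the continuous STFT: the forward implication is obtained by sampling $V_\phi f$ on a lattice and invoking the Gabor-frame norm equivalence \eqref{gaborEquiv} together with the mean value theorem, while the converse is a direct computation of $V_\phi\bigl(\sum_\alpha c_\alpha e^{\i x\cdot\alpha}\bigr)(x,\xi)=\sum_\alpha c_\alpha e^{-\i x\cdot(\xi-\alpha)}\hat{\bar\phi}(\xi-\alpha)$ followed by H\"older's inequality and the Peetre-type estimate $\langle\xi\rangle^s\lesssim\langle\alpha\rangle^s\langle\xi-\alpha\rangle^s$. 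Your route is shorter and makes the structural reason for the result transparent (the Fourier support sits on $\Z^n$, so the uniform decomposition diagonalizes the coefficient sequence), but it outsources all the analytic work to the endpoint case $p=\infty$ of Proposition \ref{normequivalence}, whose proof is itself the most delicate duality argument in the paper; the paper's proof is self-contained modulo the Gabor expansion machinery and shows explicitly how the window enters. Two small points you should make explicit if you write this up: (i) such a $\rho$ is admissible only if you also arrange $\sum_k\rho(\xi-k)>0$ everywhere, e.g.\ $\rho>0$ on a cube slightly larger than $[-1/2,1/2]^n$, which is compatible with $\supp\rho\subset(-1,1)^n$; and (ii) Proposition \ref{normequivalence} must be applied to a tempered distribution that is not a nice decaying function, so you are implicitly using that the equivalence extends from admissible functions to all of $\S'$ by the density and duality arguments in its proof --- the paper's direct STFT computation avoids this point. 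Your fallback remark about generic $\rho$ mixing neighbouring coefficients is accurate, though recovering the lower bound $|c_k|\lesssim\sum_{l\in\Lambda}\|\Box_{k+l}f\|_{L^\infty}$ there requires one extra multiplier localizing $\hat f$ to the single lattice point $k$, not just shift-invariance of $l^q_s$.
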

	\begin{proof}
		By \cite{hormander} we know that Poisson's summation formula holds for tempered distributions $\S'$, where the coefficients are defined as above. \\
		So the first step is to show $f\in \mathring{M}^{\infty,q}_s$ implies $\{c_{\alpha}\}_{\alpha\in\Z^n} \in l^q_s$. Let $f\in \mathring{M}^{\infty, q}_s (\R^n)$ and $\phi \in\S(\R^n)$. By the mean value theorem and (\ref{gaborEquiv}) it follows
		\begin{eqnarray*}
			\|\{c_\alpha\}_\alpha \|_{l^q_s} & = & \left\| \langle \alpha \rangle^s \int_{\R^n} f(x) \phi(x) e^{-\i x \cdot \alpha} \, dx \right\|_{l^q} \\
				& \leq & \left\| \langle \alpha \rangle^s \sup_{\beta\in\Z^n} \left| \int_{\R^n} f(x) \phi(x-x_\beta) e^{- \i x\cdot \xi_\alpha} \, dx \right| \, \right\|_{l^q} \\
				& \leq & C_1 \| (V_\phi f) (x_\beta,\xi_\alpha) \|_{l^{\infty, q}_s} \\
				& \leq & C_2 \|f\|_{\mathring{M}^{\infty,q}_s}.
		\end{eqnarray*}
		Now suppose $\{c_\alpha\}_{\alpha\in\Z^n} \in l^q_s(\Z^n)$. First note that 
		\begin{eqnarray*}
			V_\phi \Big(\sum_{\alpha\in\Z^n} c_\alpha e^{ \i x\cdot \alpha} \Big)(x,\xi) & = & \sum_{\alpha\in\Z^n} c_\alpha V_\phi (e^{\i x \cdot \alpha})(x,\xi) \\
				& = & \sum_{\alpha\in\Z^n} c_\alpha \int_{\R^n} e^{ \i s \cdot \alpha} \overline{\phi(s-x)} e^{-\i s\cdot \xi} \, ds \\
				& \stackrel{[y=s-x]}{=} & \sum_{\alpha\in\Z^n} c_\alpha \int_{\R^n} e^{\i x\cdot \alpha} e^{\i y\cdot \alpha} \overline{\phi(y)} e^{-\i x\cdot \xi} e^{-\i y\cdot \xi} \, dy \\
				& = & \sum_{\alpha\in\Z^n} c_\alpha e^{-\i x\cdot (\xi-\alpha)} \int_{\R^n} \overline{\phi(y)} e^{-\i y\cdot(\xi-\alpha)} \, dy \\
				& = & \sum_{\alpha\in\Z^n} c_\alpha e^{-\i x\cdot (\xi-\alpha)} \hat{\bar{\phi}}(\xi-\alpha). 
		\end{eqnarray*}
		Thus, we obtain
		\begin{eqnarray*}
			\| \sum_{\alpha\in\Z^n} c_\alpha e^{ \i x\cdot \alpha} \|_{\mathring{M}^{\infty, q}_s} & = & \| \langle \xi \rangle^s \sum_{\alpha\in\Z^n} c_\alpha \hat{\bar{\phi}} (\xi-\alpha) e^{-\i x\cdot(\xi-\alpha)} \|_{L^{\infty,q}} \\
				& = & \| \langle \cdot \rangle^s \sum_{\alpha\in\Z^n} c_\alpha \hat{\bar{\phi}} (\cdot-\alpha) \|_{L^{q}} \\
				& \leq & \| \langle \cdot \rangle^s \sum_{\alpha\in\Z^n} |c_\alpha| |\hat{\phi} (\cdot-\alpha)| \|_{L^{q}} \\
				& = &  \left\| \langle \cdot \rangle^s \sum_{\alpha\in\Z^n} \left(|c_\alpha| |\hat{\phi}(\cdot-\alpha)|^{\frac{1}{q}} \right) \left( |\hat{\phi}(\cdot-\alpha)|^{\frac{1}{q'}} \right) \right\|_{L^{q}} \\
				& \leq & \left\| \langle \cdot \rangle^s \left( \sum_{\alpha\in\Z^n} |c_\alpha|^q |\hat{\phi}(\cdot-\alpha)| \right)^{\frac{1}{q}} \left( \sum_{\alpha\in\Z^n} |\hat{\phi}(\cdot-\alpha)| \right)^{\frac{1}{q'}} \right\|_{L^{q}} \\
				& \leq & C_1  \left\| \langle \cdot \rangle^s \left( \sum_{\alpha\in\Z^n} |c_\alpha|^q |\hat{\phi}(\cdot-\alpha)| \right)^{\frac{1}{q}} \right\|_{L^{q}} \\
				& = & C_1 \left( \int_{\R^n} \langle \xi \rangle^{sq} \sum_{\alpha\in\Z^n} |c_\alpha|^q |\hat{\phi}(\xi-\alpha)| \, d\xi \right)^{\frac{1}{q}} \\
				& \leq & C_1 \left( \int_{\R^n} \sum_{\alpha\in\Z^n} \langle \alpha \rangle^{sq} |c_\alpha|^q |\hat{\phi}(\xi-\alpha)| \, d\xi \right)^{\frac{1}{q}} \\
				& = & C_1 \left( \sum_{\alpha\in\Z^n} \langle \alpha \rangle^{sq} |c_\alpha|^q \int_{\R^n} |\hat{\phi}(\xi-\alpha)| \, d\xi \right)^{\frac{1}{q}} \\		
				& \leq & C_2 \|\{c_\alpha\}_\alpha \|_{l^q_s}.
		\end{eqnarray*}
	\end{proof}
	\begin{rem}
		We assumed the period of the periodic distribution to be $1$ or $2\pi$ in each variable, respectively. The stated result can be naturally generalized for a period $T=(T_1,T_2,\ldots,T_n) \in \Z^n$. 
	\end{rem}
		Summarizing we have stated some basic properties of modulation spaces and mentioned some sensible characterizations of functions in modulation spaces. \\ Since our goal is to apply the theory of modulation spaces to partial differential equations another essential investigation concerns the Fourier transform. 
		\begin{prop} \label{propFourier}
			The set of all Fourier transforms on $\mathring{M}^{p,q}_{s,\sigma}$ is equal to the modulation space $W^{q,p}_{\sigma,s}$ for $1\leq p,q \leq \infty$ and $s,\sigma\in\R$. 
		\end{prop}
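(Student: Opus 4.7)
The plan is to reduce the statement to a direct identity between the norms, using identity (\ref{eq5}) from Lemma \ref{STFTrepresent}. That identity reads $V_\phi f(x,\xi) = e^{-\i x\cdot\xi}\, V_{\hat\phi}\hat f(\xi,-x)$, so in particular $|V_\phi f(x,\xi)| = |V_{\hat\phi}\hat f(\xi,-x)|$. This is the only non-trivial ingredient: the rest is change of variables and renaming.

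First I would fix an admissible window $\phi\in\S(\R^n)\setminus\{0\}$; then $\hat\phi\in\S(\R^n)\setminus\{0\}$ is also admissible. Plugging the identity above into the defining mixed-norm of $\mathring M^{p,q}_{s,\sigma}$ and performing the substitution $x\mapsto -x$ (which preserves Lebesgue measure and the weight $\langle x\rangle^\sigma$) gives
\[
\|f\|_{\mathring M^{p,q}_{s,\sigma}} = \left(\int_{\R^n}\Big(\int_{\R^n}\bigl|V_{\hat\phi}\hat f(\xi,x)\,\langle x\rangle^{\sigma}\langle\xi\rangle^{s}\bigr|^{p}\,dx\Big)^{q/p} d\xi\right)^{1/q}.
\]
Renaming the dummy variables $x\leftrightarrow \xi$ in the last integral recognizes it as the $W^{q,p}_{\sigma,s}$-norm of $\hat f$ computed with respect to the window $\hat\phi$. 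By Theorem \ref{MpqLpq} and its obvious analogue for the $W$-spaces (noted in the remark following Definition \ref{modCont}), the choice of admissible window is irrelevant up to equivalence, so $\|f\|_{\mathring M^{p,q}_{s,\sigma}} \asymp \|\hat f\|_{W^{q,p}_{\sigma,s}}$ uniformly in $f$.

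Finally, to conclude the set-theoretic equality $\mathcal F(\mathring M^{p,q}_{s,\sigma}) = W^{q,p}_{\sigma,s}$, I would invoke the bijectivity of $\mathcal F$ on $\S'(\R^n)$: the norm equivalence shows $\mathcal F$ maps $\mathring M^{p,q}_{s,\sigma}$ continuously into $W^{q,p}_{\sigma,s}$, and the same computation applied to $\mathcal F^{-1}$ (via the analogous identity $V_{\phi}f(x,\xi)=e^{-\i x\cdot\xi}V_{\hat\phi}\hat f(\xi,-x)$, read backwards with $\hat\phi$ replacing $\phi$ and using $\hat{\hat\phi}(x)=\phi(-x)$) yields the reverse inclusion. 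There is no genuine obstacle here; the only points demanding a little care are the choice of admissible windows on each side (one uses $\phi$ on the $\mathring M$-side and $\hat\phi$ on the $W$-side) and the harmless verification that the $W$-space is also window-independent. The main work is entirely absorbed into identity (\ref{eq5}).
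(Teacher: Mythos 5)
Your proof is correct and is exactly the intended argument: the paper itself omits the proof (deferring to the references), but identity (\ref{eq5}) of Lemma \ref{STFTrepresent} is precisely the tool it has set up for this purpose, and the remark following the proposition records exactly the equivalence $\hat f\in \mathring{M}^{p,q}_{s,\sigma} \iff f\in W^{q,p}_{\sigma,s}$ that your norm computation establishes. The bookkeeping is right — the swap of the two arguments of the STFT under $\mathcal F$ is what exchanges the roles of $(p,q)$ and of $(s,\sigma)$, the substitution $x\mapsto -x$ is harmless since $\langle -x\rangle=\langle x\rangle$, and window-independence plus bijectivity of $\mathcal F$ on $\S'(\R^n)$ finish the set-theoretic equality.
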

		\begin{rem}
			The proof of this proposition also yields 
			\[ \hat{f}\in \mathring{M}^{p,q}_{s,\sigma}(\R^n) \iff f\in W^{q,p}_{\sigma,s}(\R^n). \]
		\end{rem}
		\begin{prop} \label{coincideMW}
			The spaces $\mathring{M}^{p,q}_{s,\sigma}$ and $W^{q,p}_{s,\sigma}$ coincide if $p=q$.
		\end{prop}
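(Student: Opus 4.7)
The plan is to observe that when $p=q$ both norms reduce to the same unmixed $L^p$ norm on $\R^{2n}$, so the statement follows from Tonelli's theorem applied to the nonnegative integrand $|V_\phi f(x,\xi)\,\langle x\rangle^\sigma\langle\xi\rangle^s|^p$.

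In more detail, first I would write out both norms from Definition \ref{modCont} with the substitution $q=p$. For $\mathring{M}^{p,p}_{s,\sigma}$ the outer exponent $q/p$ equals $1$, so the double integral collapses to
\[
\|f\|_{\mathring{M}^{p,p}_{s,\sigma}} = \left( \int_{\R^n}\int_{\R^n} |V_\phi f(x,\xi)|^p \langle x\rangle^{\sigma p}\langle\xi\rangle^{sp}\, dx\, d\xi \right)^{1/p},
\]
and likewise for $W^{p,p}_{s,\sigma}$ the exponent $p/q$ equals $1$ and one obtains the same expression with the order of $dx$ and $d\xi$ swapped. Since the integrand is nonnegative and measurable on $\R^{2n}$ (Lemma \ref{STFTrepresent} gives uniform continuity of $V_\phi f$ on $\R^{2n}$ for admissible $f$, and the general case is handled by Theorem \ref{MpqLpq} which identifies $V_\phi f$ with an element of the weighted mixed-norm Lebesgue space), Tonelli's theorem applies and the two iterated integrals are equal.

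Hence the two norms are literally identical (not merely equivalent), and a tempered distribution $f$ lies in $\mathring{M}^{p,p}_{s,\sigma}$ if and only if it lies in $W^{p,p}_{s,\sigma}$, yielding the claimed coincidence of spaces. I do not anticipate any obstacle: the entire content of the proposition in the diagonal case $p=q$ is that mixed-norm spaces degenerate to ordinary $L^p$ spaces, and there Fubini--Tonelli removes the dependence on the order of integration. The only point worth a brief comment is that the limit cases $p=q=\infty$ still work, since the modifications in the definition replace both outer integrals by an essential supremum and $\esssup_\xi\esssup_x = \esssup_{(x,\xi)} = \esssup_x\esssup_\xi$ for a measurable function on a product space.
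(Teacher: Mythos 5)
Your proof is correct. The paper states this proposition without giving a proof (it defers such verifications to the references), so there is nothing to compare against; your Fubini--Tonelli argument is exactly the standard one: for $p=q$ both norms are the same iterated integral of the nonnegative measurable function $|V_\phi f(x,\xi)\langle x\rangle^\sigma\langle\xi\rangle^s|^p$ with the order of integration reversed, so the norms are literally equal, and your remark on the case $p=q=\infty$ is also fine (indeed $V_\phi f$ is continuous by Lemma \ref{STFTrepresent}, so the essential suprema are genuine suprema and commute trivially).
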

		\begin{prop} \label{inclMW}
			Let $s,\sigma\in\R$. For $p\leq q$ it holds $W^{p,q}_{s,\sigma}\subseteq \mathring{M}^{p,q}_{s,\sigma}$. Analogously, $q\leq p$ gives $\mathring{M}^{p,q}_{s,\sigma}\subseteq W^{p,q}_{s,\sigma}$.
		\end{prop}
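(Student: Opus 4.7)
The plan is to read both norms as iterated mixed $L^p$--$L^q$ norms of the single nonnegative measurable function
\[ F(x,\xi) := |V_\phi f(x,\xi)| \, \langle x \rangle^\sigma \langle \xi \rangle^s, \]
and then compare the two iterations by Minkowski's integral inequality. Note that $F$ is measurable (in fact continuous) since $V_\phi f$ is uniformly continuous by Lemma \ref{STFTrepresent}, so there is no issue in applying Fubini/Minkowski-type results.

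First I would recall the Minkowski integral inequality in its mixed-norm form: for any nonnegative measurable $F$ on $\R^n \times \R^n$ and any exponents $1 \le r \le t \le \infty$,
\[ \left\| \, \|F(x,\xi)\|_{L^r_x} \, \right\|_{L^t_\xi} \;\le\; \left\| \, \|F(x,\xi)\|_{L^t_\xi} \, \right\|_{L^r_x}. \]
This is the standard consequence of Minkowski's integral inequality applied to the function $|F|^r$, after writing $\|G\|_{L^{t/r}} = \|G\|_{L^{t/r}}$ for $t/r \ge 1$; the limit cases with $r=\infty$ or $t=\infty$ follow by the usual monotone/essential-supremum arguments.

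With this in hand the proposition is immediate. In the case $p \le q$, apply the inequality with $r=p$ and $t=q$ to obtain
\[ \|f\|_{\mathring{M}^{p,q}_{s,\sigma}} = \bigl\| \, \|F(x,\xi)\|_{L^p_x} \, \bigr\|_{L^q_\xi} \;\le\; \bigl\| \, \|F(x,\xi)\|_{L^q_\xi} \, \bigr\|_{L^p_x} = \|f\|_{W^{p,q}_{s,\sigma}}, \]
so $W^{p,q}_{s,\sigma} \subseteq \mathring{M}^{p,q}_{s,\sigma}$. Symmetrically, in the case $q \le p$, apply the inequality with $r=q$ and $t=p$ (integrating first in $\xi$ and then in $x$ on the inside) to get
\[ \|f\|_{W^{p,q}_{s,\sigma}} = \bigl\| \, \|F(x,\xi)\|_{L^q_\xi} \, \bigr\|_{L^p_x} \;\le\; \bigl\| \, \|F(x,\xi)\|_{L^p_x} \, \bigr\|_{L^q_\xi} = \|f\|_{\mathring{M}^{p,q}_{s,\sigma}}, \]
yielding $\mathring{M}^{p,q}_{s,\sigma} \subseteq W^{p,q}_{s,\sigma}$.

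There is essentially no serious obstacle: the whole content of the proposition is the mixed-norm Minkowski inequality, with the weights $\langle x\rangle^\sigma \langle\xi\rangle^s$ absorbed harmlessly into the integrand. The only minor care required is to cover the endpoint cases $p=\infty$ or $q=\infty$ by the standard convention $\|\cdot\|_{L^\infty} = \esssup|\cdot|$, which causes no trouble because $V_\phi f$ is continuous and the Minkowski-type inequality for essential suprema is elementary.
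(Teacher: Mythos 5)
Your proof is correct. The paper itself states Proposition \ref{inclMW} without proof (deferring to the references around Theorem \ref{MpqLpq}), and the argument you give is exactly the standard one: both norms are iterated mixed norms of the same function $F(x,\xi)=|V_\phi f(x,\xi)|\langle x\rangle^\sigma\langle\xi\rangle^s$, and the generalized Minkowski inequality $\bigl\|\,\|F\|_{L^r_x}\bigr\|_{L^t_\xi}\leq\bigl\|\,\|F\|_{L^t_\xi}\bigr\|_{L^r_x}$ for $r\leq t$ yields both inclusions by choosing $(r,t)=(p,q)$ and $(r,t)=(q,p)$ respectively; your handling of the $\infty$ endpoints is also fine.
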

		A multiplication result completes the fundamental preparations for eventual investigations of partial differential equations. The following theorem was originally shown by Feichtinger.
		\begin{thm} \label{multiplication}
			Let $p_j,q_j\in [1,\infty]$, where $j=0,1,2$, such that 
			\[ \frac{1}{p_1} + \frac{1}{p_2} = 1 + \frac{1}{p_0} \quad \mbox{and} \quad \frac{1}{q_1} + \frac{1}{q_2} = \frac{1}{q_0}. \]
			Furthermore, let $s,\sigma_0,\sigma_1,\sigma_2 \in \R$ be numbers such that $\sigma_1+\sigma_2=\sigma_0$. Then it holds 
			\begin{equation} \label{sub1}
				W^{q_1,p_1}_{s,\sigma_1} \cdot W^{q_2,p_2}_{s,\sigma_2} \subseteq W^{q_0,p_0}_{s,\sigma_0}.
			\end{equation}
			Now assume that
			\[ \frac{1}{p_1} + \frac{1}{p_2} = \frac{1}{p_0} \quad \mbox{and} \quad \frac{1}{q_1} + \frac{1}{q_2} = 1 + \frac{1}{q_0}. \]
			Then
			\begin{equation} \label{sub2}
				\mathring{M}^{p_1,q_1}_{s,\sigma_1} \cdot \mathring{M}^{p_2,q_2}_{s,\sigma_2} \subseteq \mathring{M}^{p_0,q_0}_{s,\sigma_0}
			\end{equation}
			and
			\begin{equation} \label{sub3}
				W^{p_1,q_1}_{s,\sigma_1} \cdot W^{p_2,q_2}_{s,\sigma_2} \subseteq W^{p_0,q_0}_{s,\sigma_0}.
			\end{equation}
		\end{thm}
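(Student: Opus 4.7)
The plan is to handle all three inclusions by the same unified argument, based on a pointwise convolution identity for the STFT of a product. Choose window functions $\phi_0,\phi_1,\phi_2\in\mathcal{S}(\R^n)\setminus\{0\}$ compatible in the sense $\phi_0=\phi_1\phi_2$ (recalling from Theorem \ref{MpqLpq} that different windows produce equivalent norms, so this is without loss of generality). Using formula (\ref{eq1}) of Lemma \ref{STFTrepresent} together with the fact that $\mathcal{F}$ turns pointwise products into convolutions, I would first derive the identity
\[
V_{\phi_0}(fg)(x,\xi) \;=\; (2\pi)^{-\frac{n}{2}}\bigl(V_{\phi_1}f(x,\cdot)\ast V_{\phi_2}g(x,\cdot)\bigr)(\xi),
\]
where $\ast$ denotes convolution in the frequency variable $\xi$, while the relation is pointwise in $x$. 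Initially I would establish this for Schwartz $f,g$ and then extend by density (Proposition \ref{ModInclusion}(2)) once the norm estimate is in place.

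With the identity in hand, each inclusion reduces to an application of Young's convolution inequality in $\xi$ and Hölder's inequality in $x$, in the order dictated by the definition of the target norm. For (\ref{sub2}) the $L^{p_0}$-norm in $x$ is taken first: using Minkowski's integral inequality followed by Hölder in $x$ with $1/p_1+1/p_2=1/p_0$ inside the convolution integral produces a convolution (in $\eta$) of the two functions $\eta\mapsto\|V_{\phi_j}\cdot(\cdot,\eta)\|_{L^{p_j}_x}$, and the outer $L^{q_0}$-norm in $\xi$ is then controlled by Young with $1/q_1+1/q_2=1+1/q_0$. For (\ref{sub3}), which lives in the $W$-scale with the roles of $x$ and $\xi$ interchanged in the norm, the order is reversed: Young in $\xi$ first (giving the $1+1/q_0$ relation), then Hölder in $x$ (giving the $1/p_0$ relation). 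Inclusion (\ref{sub1}) is the $W$-analogue in which the exponents of the mixed norms are swapped ($W^{q_j,p_j}$), so the same bookkeeping produces the dual pairing $1/p_1+1/p_2=1+1/p_0$ (Young in $\xi$) together with $1/q_1+1/q_2=1/q_0$ (Hölder in $x$).

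The weights are distributed via Peetre's submultiplicative estimate $\langle \xi\rangle^s\leq C_s\langle\xi-\eta\rangle^{|s|}\langle\eta\rangle^s$ (applied in the form appropriate to the sign of $s$, possibly in a symmetrized version), which lets me bound $\langle\xi\rangle^s|V_{\phi_0}(fg)(x,\xi)|$ by a convolution of the weighted STFTs; the $x$-weight simply factors as $\langle x\rangle^{\sigma_0}=\langle x\rangle^{\sigma_1}\langle x\rangle^{\sigma_2}$ by hypothesis and is absorbed cleanly by Hölder.

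The main obstacle is not the estimate itself, which is essentially linear algebra on indices, but rather the distributional meaning of the product $fg$ and the extension from Schwartz data to arbitrary elements of the modulation space. For finite integrability parameters the density of $\mathcal{S}(\R^n)$ from Proposition \ref{ModInclusion}(2) suffices, but the endpoint cases $p_j=\infty$ or $q_j=\infty$ require a separate argument: one can either invoke duality via Theorem \ref{Mduality} (testing the product against $\mathring{M}^{p'_0,q'_0}_{-s,-\sigma_0}$-functions) or use the weak-$*$ approximation scheme already employed in the proof of Proposition \ref{normequivalence}. Once these approximation issues are settled, the three inclusions follow by the unified convolution--Hölder--Young scheme described above.
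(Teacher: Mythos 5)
The paper does not actually prove this theorem: its ``proof'' is the single line ``Cf.\ Theorem 2.4 in \cite{toft}.'' What you have written is, in essence, a reconstruction of the standard argument behind that citation, and your index bookkeeping is correct. The pointwise-in-$x$ identity $V_{\phi_0}(fg)(x,\xi)=(2\pi)^{-n/2}\bigl(V_{\phi_1}f(x,\cdot)\ast V_{\phi_2}g(x,\cdot)\bigr)(\xi)$ for $\phi_0=\phi_1\phi_2$ follows exactly as you say from \eqref{eq1}, and the three inclusions then come from applying H\"older in $x$ and Young in $\xi$ in the order dictated by whether the inner norm of the target space is in $x$ (which is the case for \eqref{sub2}, where you correctly insert Minkowski's integral inequality before H\"older) or in $\xi$ (for \eqref{sub1} and \eqref{sub3}, where Young is applied pointwise in $x$ first and no Minkowski is needed). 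The window normalization $\phi_0=\phi_1\phi_2$ is harmless by Theorem \ref{MpqLpq}, and your treatment of the endpoint/density issues via Theorem \ref{Mduality} or the weak-$*$ scheme is reasonable. So your route is not different from the paper's source; it is the proof the paper delegates to the literature.

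One genuine gap remains in your handling of the $\xi$-weight. Since all three spaces carry the \emph{same} parameter $s$, the convolution step requires the submultiplicative estimate $\langle\xi\rangle^{s}\leq C\langle\eta\rangle^{s}\langle\xi-\eta\rangle^{s}$, which holds only for $s\geq 0$. The version you quote, $\langle\xi\rangle^{s}\leq C_s\langle\xi-\eta\rangle^{|s|}\langle\eta\rangle^{s}$, is true for all $s$ but for $s<0$ it places the weight $\langle\cdot\rangle^{|s|}=\langle\cdot\rangle^{-s}$ on the second factor, which is \emph{not} controlled by the $\|\cdot\|_{\ldots,s,\ldots}$ norm of $g$; no ``symmetrized version'' repairs this, because the inequality with $s$ on both factors simply fails for $s<0$ (take $\xi=0$ and $|\eta|\to\infty$). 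This is really a defect inherited from the statement: Toft's Theorem 2.4 assumes the weights satisfy $\omega_0(\xi_1+\xi_2)\lesssim\omega_1(\xi_1)\omega_2(\xi_2)$, which for $\langle\xi\rangle^{s}$ forces $s\geq 0$. Your proof is complete for $s\geq 0$ (the only case the paper subsequently uses); you should either restrict to $s\geq0$ or replace the identical weight $s$ by parameters $s_0\leq s_1,s_2$ satisfying an appropriate moderateness condition.
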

		\begin{proof}
			Cf. Theorem 2.4 in \cite{toft}.
		\end{proof}
	
\section{Gevrey-modulation spaces} \label{SecGMspace}

	\subsection{Introduction}
	We have got a basic understanding of weighted modulation spaces. We also introduced two equivalent definitions of modulation spaces. In all previous proofs we used the continuous Definition \ref{STFT} of the short-time Fourier transform. So we have not made use of Definition \ref{defdecomp} of modulation spaces yet. The advantages of the frequency-uniform decomposition appear in the proof of Theorem \ref{algebra}, Lemma \ref{estSuper} and Theorem \ref{Superposition}. Another important aspect which we have not considered yet is the meaning of the weight functions. Note that subsequently the weight with respect to the $x$-variable will not be of interest since the weight function with respect to the $\xi$-variable corresponds to regularity properties. Now we will focus on growth properties of those weights. So far we worked in the Sobolev frame, i.e., we considered weight functions $w(\xi) = \langle \xi \rangle^s$ which is rather reasonable. For instance suppose that $f\in\S(\R^n)$. Since we assumed the window function $\phi \in\S(\R^n)$ to be a Schwartz function and by Proposition \ref{STFTdecay} we know that for all $N\geq 0$ there exists a constant $C_N>0$ such that
	\[ |V_\phi f (x,\xi)| \leq C_N (1 + |x| + |\xi|)^{-N}. \]
	This statement also holds vice versa. \\
	In order to obtain better results we adjust the weight function $w=w(\xi)$. In particular we will subsequently work with a function $w(\xi) = e^{|\xi|^{\frac{1}{s}}}$, where $s>1$ is the so-called weight parameter. That basically means we are using weights of Gevrey type. The motivation to follow this strategy comes from \cite{brs}. There the authors used spaces of Gevrey type, i.e., function spaces defined by the behavior of the Fourier transform. In fact we are in a similar situation when treating modulation spaces. Since the future work is aimed at applying modulation spaces to partial differential equations we need to prepare some tools. We will prove an algebra result that can be used to include analytic non-linearities in partial differential equations. It also helps us to show a superposition result. This in turn provides us a generalization of our considerations to particular non-analytic non-linearities in partial differential equations. Since it is not in the least trivial to obtain superposition operators for spaces which are characterized by the Fourier transform side we need to choose admissible weights, i.e., weights of Gevrey type $w(\xi) = e^{|\xi|^{\frac{1}{s}}}$ with $s>1$. This implies another problem. The space of window functions for the short-time Fourier transform namely the Schwartz space does not sensibly define modulation spaces anymore. Therefore we introduce the \emph{Gelfand-Shilov space} as in \cite{tknn}.
	
	\subsection{On Gelfand-Shilov Spaces and a Definition of Gevrey-modulation Spaces}
	\begin{df} \label{gelfandshilov}
		Let $h>0$ and $s\in\R$ be fixed numbers. Then the Gelfand-Shilov space $\S_{s,h}(\R^n)$ consists of all functions $f\in C^\infty (\R^n)$ satisfying 
		\[ \sup_{x \in \R^n} |x^\alpha \partial^\beta f(x)| \leq C h^{|\alpha|+|\beta|} (\alpha!\beta!)^s \]
		for all multi-indices $\alpha, \beta \in \N^n$ with a positive constant $C$. Thus the Gelfand-Shilov space is the set
		\[ \S_{s,h}(\R^n) = \{ f\in C^\infty (\R^n) : \|f\|_{\S_{s,h}(\R^n)} < \infty \}, \]
		where the norm is defined as
		\[ \|f\|_{\S_{s,h}(\R^n)} = \sup_{x\in\R^n, \alpha,\beta\in\N^n} \frac{|x^\alpha \partial^\beta f(x)|}{h^{|\alpha|+|\beta|} (\alpha!\beta!)^s}. \]
	\end{df}
	Obviously, it holds $\S_{s,h}\subset \S$ and moreover, the Gelfand-Shilov spaces are increasing with the parameters $h$ and $s$. Due to \cite{gelfand} it is  well-known that these spaces are Banach spaces. \\
	We can also define the Gelfand-Shilov space $\S_s (\R^n)$ which is the inductive limit with respect to $\S_{s,h} (\R^n)$, i.e.,
	\[ \S_s(\R^n) = \bigcup_{h>0} \S_{s,h}(\R^n). \]
	In \cite{gelfand} it is shown that for $s<\frac{1}{2}$ the space $\S_s$ is trivial. So it is reasonable to consider only values $s\geq \frac{1}{2}$. By the argumentation in \cite{tknn} it also follows that the dual space $\S_{s,h}'$ of $S_{s,h}$ is a Banach space which contains the set of all tempered distributions $\S'$. Now we can define the so-called Gelfand-Shilov distribution space $\S_s'$ as the projective limit 
	\[ \S_s'(\R^n) = \bigcap_{h>0} \S_{s,h}'(\R^n). \]
	If we go back to Definition \ref{modCont} of the modulation space considering now weights of exponential type the short-time Fourier transform obviously needs a behavior similar to 
	\[ |V_\phi f(x,\xi)| \leq C e^{-\epsilon (|x|^{\frac{1}{s}} + |\xi|^{\frac{1}{s}})} \]
	for some $\epsilon>0$, where the strong decay with respect to the $\xi$-variable is necessary and the decay with respect to the $x$-variable is sufficient for the convergence of the integral.
	\begin{lma} \label{GSdef}
		The following conditions are equivalent: 
		\begin{enumerate}
			\item It holds $f\in \S_s(\R^n)$. 
			\item There is a constant $C>0$ and a number $h>0$ such that
				\begin{equation} \label{GSdefChar1}
					\sup_{x\in\R^n} |x^\alpha f(x)| \leq C h^{|\alpha|} (\alpha !)^s \mbox{ and } \quad \sup_{x \in \R^n} |\partial^\beta f(x)| \leq C h^{|\beta|} (\beta !)^s 
				\end{equation}
				for all multi-indices $\alpha, \beta \in \N^n$.
			\item There is a constant $C>0$ and a number $h>0$ such that
				\begin{equation} \label{GSdefChar2}
					\sup_{x\in\R^n} |x^\alpha f(x)| \leq C h^{|\alpha|} (\alpha !)^s \mbox{ and } \quad \sup_{\xi \in \R^n} |\xi^\beta \hat{f}(\xi)| \leq C h^{|\beta|} (\beta !)^s 
				\end{equation}
				for all multi-indices $\alpha, \beta \in \N^n$.
		\end{enumerate}
	\end{lma}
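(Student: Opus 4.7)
The plan is to prove $(1)\Leftrightarrow(2)$ directly and then to deduce $(1)\Leftrightarrow(3)$ by Fourier symmetry. The direction $(1)\Rightarrow(2)$ is immediate: setting $\beta=0$ in Definition \ref{gelfandshilov} yields the first bound of (\ref{GSdefChar1}) and $\alpha=0$ yields the second, so the substantive content lies in the remaining steps.

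For $(2)\Rightarrow(1)$, the first bound of (\ref{GSdefChar1}) optimised over $\alpha$ via Stirling produces the pointwise decay $|f(x)|\le Ce^{-\epsilon|x|^{1/s}}$, while the second furnishes uniform Gevrey regularity. To combine these into a joint bound I would apply a Gevrey-flavoured Kolmogorov interpolation: for $|x|$ large, write $\partial^\beta f(x)$ as a finite-difference combination of values $f(x+y_j)$ modulo a Taylor remainder; the values $f(x+y_j)$ inherit the exponential decay of $f$, and the remainder is controlled by the regularity bound of (\ref{GSdefChar1}). Optimising the Taylor order $N$ in terms of $|x|$ and $|\beta|$ yields $|\partial^\beta f(x)|\le Ch_2^{|\beta|}(\beta!)^s e^{-\epsilon'|x|^{1/s}}$; multiplying by $|x^\alpha|$ and re-optimising over $\alpha$ by Stirling produces the joint Gevrey bound (1). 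For $x$ in a bounded set the regularity bound of (\ref{GSdefChar1}) suffices directly, with the factorial inequality $(\alpha+\beta)!\le 2^{|\alpha|+|\beta|}\alpha!\beta!$ absorbing cross terms into an enlarged constant.

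For $(1)\Leftrightarrow(3)$ I would exploit Fourier symmetry. The identities $\widehat{x^\alpha f}=\i^{|\alpha|}\partial^\alpha\hat f$ and $\widehat{\partial^\beta f}=(\i\xi)^\beta\hat f$, combined with a weighted-$L^1$ trick (inserting $(1+|x|)^{n+1}$ to pass from pointwise Gevrey bounds to $L^1$ bounds, followed by the factorial absorption $(\alpha+\beta)!\le 2^{|\alpha|+|\beta|}\alpha!\beta!$), show that condition (1) for $f$ is equivalent to condition (1) for $\hat f$ with the roles of $\alpha$ and $\beta$ interchanged. Moreover, condition (3) for $f$ translates, via the same weighted-$L^1$ trick applied to its first bound, into exactly condition (2) for $\hat f$. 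Chaining the equivalences (3) for $f$ $\Leftrightarrow$ (2) for $\hat f$ $\Leftrightarrow$ (1) for $\hat f$ $\Leftrightarrow$ (1) for $f$, where the middle equivalence is the already-established $(2)\Leftrightarrow(1)$ applied to $\hat f$, completes the proof.

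The main obstacle is the Gevrey-Kolmogorov interpolation inside $(2)\Rightarrow(1)$, where the finite-difference coefficients and the Taylor order must be balanced delicately against the Gevrey growth $(\beta!)^s$ so that the combinatorial inflation does not spoil the final $(\alpha!\beta!)^s$ factor; everything else in the argument reduces to standard Fourier and factorial bookkeeping.
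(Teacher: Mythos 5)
Your outline is logically sound and reaches the lemma by a genuinely different route in both substantive implications. For $(2)\Rightarrow(1)$ the paper does not interpolate in physical space at all: it first reduces $\|x^\alpha\partial^\beta f\|_{L^\infty}$ to $\|x^\alpha\partial^\beta f\|_{L^2}$ by a one-dimensional Sobolev-type argument, and then integrates by parts in the $L^2$ pairing, writing $\|x^\alpha\partial^\beta f\|_{L^2}^2=(-1)^{|\beta|}\int\partial^\beta\bigl(x^{2\alpha}\partial^\beta f\bigr)f\,dx$, so that after the Leibniz rule and Cauchy--Schwarz the mixed term is controlled by products $\|\partial^{2\beta-\gamma}f\|_{L^2}\,\|x^{2\alpha-\gamma}f\|_{L^2}$ of \emph{pure} derivative and \emph{pure} monomial terms, each bounded directly by hypothesis (2); the factorial bookkeeping is absorbed by $((\alpha+\beta)!)^s\leq H^{|\alpha|+|\beta|}(\alpha!)^s(\beta!)^s$. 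This decouples $x^{2\alpha}$ from $\partial^{2\beta}$ with no optimization whatsoever and thereby sidesteps exactly the delicate balancing of finite-difference coefficients against $(\beta!)^s$ that you yourself flag as the main obstacle; your Kolmogorov-type scheme can be made to work, but that balancing is the one step you have not executed and is where all the risk sits. For $(3)\Rightarrow(1)$ the paper likewise reduces to supplying the missing derivative bound of (2), but for $f$ itself rather than for $\hat f$: from $|\xi^\beta\hat f(\xi)|\leq Ch^{|\beta|}(\beta!)^s$ it deduces $|\hat f(\xi)|\leq Ce^{-N(|\xi|/h)}$ with $N$ the associated function of the sequence $(\beta!)^s$, and then estimates $|\partial^\beta f(x)|$ through the inverse Fourier integral. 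Your alternative --- transporting (3) for $f$ into (2) for $\hat f$ by the weighted-$L^1$ trick and then invoking $(2)\Rightarrow(1)$ for $\hat f$ together with Fourier invariance of $\S_s$ --- is arguably cleaner, since it avoids the associated-function machinery, at the price of proving the Fourier invariance explicitly (which the paper only cites). In short: your architecture is correct, but to turn it into a proof you must either carry out the finite-difference/Taylor optimization in full or replace it by the paper's integration-by-parts argument.
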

	\begin{proof}
		We basically use a simpler version of the proof of Theorem 2.3 in \cite{chung}. \\
		Assume that $f\in \S_s(\R^n)$. Then the inequalities (\ref{GSdefChar1}) and (\ref{GSdefChar2}) follow immediately by Definition \ref{gelfandshilov} since the space $\S_s$ is invariant under Fourier transform as stated in \cite{tknn}. \\
	Now let us assume that the inequalities (\ref{GSdefChar1}) hold. From the estimate
	\[ \left( \begin{array}{c} \alpha+\beta \\ \beta \end{array} \right) \stackrel{[\gamma=\alpha+\beta]}{=} \left( \begin{array}{c} \gamma \\ \gamma-\alpha \end{array} \right) = \left( \begin{array}{c} \gamma \\ \alpha \end{array} \right) \leq \sum_{\alpha\leq \gamma} \left( \begin{array}{c} \gamma \\ \alpha \end{array} \right) =  2^{|\gamma|} =  2^{|\alpha+\beta|} \]
	we can deduce
	\begin{eqnarray*}
		((\alpha +\beta)!)^s & = & (\alpha!)^s (\beta!)^s \left( \begin{array}{c} \alpha+\beta \\ \beta \end{array} \right)^s \\
			& \leq & 2^{s (|\alpha|+|\beta|)} (\alpha!)^s (\beta!)^s. 
	\end{eqnarray*}
	Hence we can say that there exists a constant $H>0$ such that
	\begin{equation} \label{abest}
		((\alpha+\beta)!)^s \leq H^{|\alpha| + |\beta|} (\alpha!)^s (\beta!)^s.
	\end{equation}
	Later on we want to estimate the $L^2$-norm of the term $x^\alpha \partial^\beta f(x)$. Therefore we need to show that there exists a constant $C>0$ such that 
	\[ \| x^\alpha \partial^\beta f(x)\|_{L^\infty (\R^n)} \leq C \|x^\alpha \partial^\beta f(x)\|_{L^2(\R^n)}. \]
	For simplicity we only consider the case $n=1$. For higher dimensions the argumentation stays the same. Note that $g(x) = \int_{-\infty}^x g'(x) \, dx$ and $(x^\alpha \partial^\beta f(x))' = \alpha x^{\alpha-1} \partial^\beta f(x) + x^\alpha \partial^{\beta+1} f(x)$. Then it holds
	\begin{eqnarray*}
		\|x^\alpha \partial^\beta f(x)\|_{L^\infty} & \leq & \|(x^\alpha \partial^\beta f(x))' \|_{L^1} \\
			& \leq & \|(1+x^2) (x^\alpha \partial^\beta f(x))' \|_{L^2} \| (1+x^2)^{-1}\|_{L^2} \\
			& \leq & C_0 \| (1+x^2) (\alpha x^{\alpha-1} \partial^\beta f(x) + x^\alpha \partial^{\beta+1} f(x))\|_{L^2} \\
			& \leq & C_0 \Big( \alpha \|x^{\alpha-1} \partial^\beta f(x)\|_{L^2} + \| x^\alpha \partial^{\beta+1} f(x) \|_{L^2} \\
			& & \qquad \qquad + \alpha \|x^{\alpha +1} \partial^\beta f(x)\|_{L^2} + \| x^{\alpha+2} \partial^{\beta+1} f(x)\|_{L^2} \Big),
	\end{eqnarray*}
	where we used Cauchy-Schwarz and Minkowski inequality. Since $f\in \S_s(\R^n)$ and our estimates hold for all multi-indices $\alpha, \beta$ we can simply write
	\[ \|x^\alpha \partial^\beta f \|_{L^\infty (\R^n)} \leq C \| x^\alpha \partial^\beta f\|_{L^2(\R^n)}. \]
	Thus, we can estimate the $L^2$-norm instead of $L^\infty$-norm. By applying partial integration, the Leibniz rule and the Cauchy-Schwarz inequality we get
		\begin{eqnarray*}
			\|x^\alpha \partial^\beta f(x)\|^2_{L^2(\R^n)} & = & \int_{\R^n} \left[ x^{2\alpha} \partial^\beta f(x) \right] \partial^\beta f(x) \, dx \\
				& = & (-1)^{|\beta|} \int_{\R^n} \partial^\beta \left( x^{2\alpha} \partial^\beta f(x) \right) f(x) \, dx \\
				& = & (-1)^{|\beta|} \int_{\R^n} \sum_{\gamma\leq \beta} \binom{\beta}{\gamma} \left(\partial^\gamma x^{2\alpha}\right) \partial^{\beta-\gamma} \left(\partial^\beta f(x) \right) f(x) \, dx \\
				& = & (-1)^{|\beta|} \int_{\R^n} \sum_{\gamma\leq \beta} \binom{\beta}{\gamma} \binom{2\alpha}{\gamma} \gamma! \partial^{2\beta -\gamma} f(x) x^{2\alpha -\gamma} f(x)\, dx \\
				& \leq & \sum_{\gamma\leq \beta} \binom{\beta}{\gamma} \binom{2\alpha}{\gamma} \gamma! \| \partial^{2\beta -\gamma} f(\cdot) \|_{L^2(\R^n)} \| x^{2\alpha -\gamma} f(\cdot) \|_{L^2(\R^n)} \\
				& \leq & \sum_{\gamma\leq \beta} \binom{\beta}{\gamma} \binom{2\alpha}{\gamma} \gamma! C h^{|2\beta-\gamma|} ((2\beta-\gamma)!)^s C h^{|2\alpha-\gamma|} ((2\alpha -\gamma)!)^s \\
				& \leq & C^2 h^{2 (|\alpha|+|\beta|)} ((2\beta)!)^s ((2\alpha)!)^s \sum_{\gamma\leq \beta} \frac{\binom{\beta}{\gamma} \binom{2\alpha}{\gamma} \gamma!}{(\gamma !)^s (\gamma !)^s} \\
				& \leq & C^2 h^{2 (|\alpha|+|\beta|)} ((2\beta)!)^s ((2\alpha)!)^s \sum_{\gamma} \binom{\beta}{\gamma} \sum_{\gamma} \binom{2\alpha}{\gamma} \sum_{\gamma} \frac{1}{(\gamma!)^s} \\
				& \leq & C_0^2 h^{2 (|\alpha|+|\beta|)} ((2\beta)!)^s ((2\alpha)!)^s 2^{|\beta|} 2^{2|\alpha|} \\
				& \leq & C_0^2 (2h)^{2 (|\alpha|+|\beta|)} ((2\beta)!)^s ((2\alpha)!)^s \\
				& \stackrel{(\ref{abest})}{\leq} & C_0^2 (2Hh)^{2 (|\alpha|+|\beta|)} (\beta!)^{2s} (\alpha!)^{2s}.
		\end{eqnarray*}
		Thus we have $f\in \S_s(\R^n)$. \\
		Now suppose that (\ref{GSdefChar2}) holds. Then we only need to show that 
		\[ \sup_{x\in \R^n} |\partial^\beta f(x)| \leq C h^{|\beta|} (\beta !)^s. \]
		By Definition 1.1 in \cite{chungEquiv} we deduce that there exists the so-called associated function $N=N(\xi)$ of the sequence $N_\beta = (\beta !)^s$ on $[0,\infty)$ such that 
		\[ N(\xi) = \sup_{\beta \in \N^n} \log \frac{\xi^\beta}{(\beta !)^s}. \]
		Due to the theory presented in \cite{chung} we can even state the inverse result. In particular we have 
		\[ (\beta !)^{2s} = ((\beta-1) !)^s \beta^s (\beta !)^s \leq ((\beta-1) !)^s (\beta+1)^s (\beta !)^s = ((\beta-1) !)^s ((\beta+1) !)^s \]
		and by Definition 3.2 together with Proposition 3.4 in \cite{chung} it follows 
		\begin{equation} \label{defSeq}
			(\beta !)^s = \sup_{\xi \in \R^n} \frac{\xi^\beta}{e^{N(\xi)}},
		\end{equation}
		where $N(\xi)$ is the associated function of $(\beta !)^s$. Now (\ref{GSdefChar2}) and the previous considerations yield
		\begin{eqnarray*}
			|\hat{f}(\xi)| & \leq & C \inf_{\beta \in \N^n} \frac{h^{|\beta|} (\beta !)^s}{|\xi|^{|\beta|}} \\
				& = & C e^{-N(\frac{|\xi|}{h})}.				
		\end{eqnarray*}
		Moreover, using (\ref{defSeq}) the estimates
		\begin{eqnarray*}
			|\partial^\beta f(x)| & \leq & (2\pi)^{-n} \int_{\R^n} | e^{\i x\cdot \xi} \xi^\beta \hat{f}(\xi) | \, d\xi \\
				& \leq & C_1 \int_{\R^n} |\xi|^{|\beta|} e^{-N(\frac{|\xi|}{h})} \, d\xi \\
				& \leq & C_1 \sup_{\xi\in\R^n} \left[ |\xi|^{|\beta|} e^{-\frac{1}{2} N(\frac{|\xi|}{h})} \right] \int_{\R^n} e^{-\frac{1}{2} N(\frac{|\xi|}{h})} \, d\xi \\
				& \leq & C_2 \sup_{\xi\in\R^n} \left( |\xi|^{2|\beta|} e^{-N(\frac{|\xi|}{h})} \right)^{\frac{1}{2}} \\
				& \stackrel{[\eta=\frac{\xi}{h}]}{=} & C_2 \sup_{\eta\in\R^n} \left( |\eta|^{2|\beta|} h^{2|\beta|} e^{-N(|\eta|)} \right)^{\frac{1}{2}} \\
				& = & C_2 \left( h^{2|\beta|} ((2\beta)!)^s \right)^{\frac{1}{2}} \\
				& \leq & C_2 h^{|\beta|} \sqrt{((2\beta)!)^s} \\
				& \stackrel{(\ref{abest})}{\leq} & C_2 h^{|\beta|} \sqrt{H^{2|\beta|} (\beta!)^{2s}} \\
				& = & C_2 (Hh)^{|\beta|} (\beta!)^s
		\end{eqnarray*}
		complete the proof. 		
	\end{proof}
	
	\begin{prop} \label{GScharact}
		Let $s\geq \frac{1}{2}$. Then there exist positive constants $C$ and $\epsilon$ such that a function $f$ can be characterized by 
		\begin{equation} \label{GSfunc}
			|f(x)| \leq C e^{-\epsilon |x|^{\frac{1}{s}}} \mbox{ and} \quad |\hat{f}(\xi)| \leq Ce^{-\epsilon |\xi|^{\frac{1}{s}}} 
		\end{equation}
		if and only if $f\in \S_s(\R^n)$. 
	\end{prop}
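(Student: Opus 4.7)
The plan is to reduce the statement to the polynomial characterization of $\S_s(\R^n)$ provided by Lemma \ref{GSdef}, condition (\ref{GSdefChar2}). Since $f \in \S_s(\R^n)$ is equivalent to the existence of constants $C,h>0$ such that $\sup_x|x^\alpha f(x)| \leq C h^{|\alpha|}(\alpha!)^s$ and $\sup_\xi |\xi^\beta \hat{f}(\xi)| \leq C h^{|\beta|}(\beta!)^s$ for all multi-indices, it suffices to show that each of the two exponential estimates in (\ref{GSfunc}) is equivalent to the corresponding polynomial bound in (\ref{GSdefChar2}). The argument is symmetric between $f$ and $\hat{f}$, so I would concentrate on the $f$-side; the estimate for $\hat{f}$ then follows by repeating the same calculation verbatim.

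For the direction $(\ref{GSfunc}) \Rightarrow (\ref{GSdefChar2})$, I would fix a multi-index $\alpha$ and estimate $|x^\alpha f(x)| \leq C|x|^{|\alpha|} e^{-\epsilon |x|^{1/s}}$. The scalar function $r \mapsto r^{|\alpha|} e^{-\epsilon r^{1/s}}$ attains its maximum at $r_{*} = (s|\alpha|/\epsilon)^s$, giving the value $(s|\alpha|/\epsilon)^{s|\alpha|} e^{-s|\alpha|}$. Applying the elementary inequality $N^N \leq e^N N!$ with $N=|\alpha|$ yields $|\alpha|^{s|\alpha|} \leq e^{s|\alpha|}(|\alpha|!)^s$, so the optimal value is bounded by $(s/\epsilon)^{s|\alpha|}(|\alpha|!)^s$. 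Combined with $|\alpha|! \leq n^{|\alpha|}\alpha!$, this produces a bound of the form $\tilde{C} h^{|\alpha|}(\alpha!)^s$ with $h = n(s/\epsilon)^s$. The analogous computation for $|\xi^\beta \hat{f}(\xi)|$ gives the second inequality in (\ref{GSdefChar2}).

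For the converse $(\ref{GSdefChar2}) \Rightarrow (\ref{GSfunc})$, I would fix $x \neq 0$ and choose the coordinate $i_0$ with $|x_{i_0}| = \max_i |x_i|$, so $|x_{i_0}| \geq |x|/\sqrt{n}$. Selecting multi-indices concentrated on the $i_0$-th coordinate with $|\alpha|=N$, the polynomial bound gives
\[ |f(x)| \leq \frac{C h^N (N!)^s}{|x_{i_0}|^N} \leq \frac{C(h\sqrt{n})^N (N!)^s}{|x|^N}. \]
I would then take the infimum over $N \in \N_0$. Using $N! \leq N^N$, the quantity $(h\sqrt{n})^N N^{sN}/|x|^N$ is minimized for $N \sim (|x|/(h\sqrt{n}))^{1/s}/e$, and a direct computation shows the minimum decays like $e^{-\epsilon |x|^{1/s}}$ with $\epsilon = s/(e(h\sqrt{n})^{1/s})$. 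The same reasoning applied on the Fourier side yields the decay of $\hat{f}$.

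The main technical obstacle is the calibration of the Stirling-type estimate and the discrete-versus-continuous optimization: $N$ must be a nonnegative integer, so the optimal real value has to be rounded, and for small $|x|$ the bound $e^{-\epsilon|x|^{1/s}}$ is only nontrivial once one incorporates the trivial uniform bound $|f(x)| \leq \|f\|_{L^\infty}$ coming from the $\alpha=0$ case. These are routine adjustments, but they are the only non-mechanical points in an otherwise straightforward chain of Stirling-type estimates.
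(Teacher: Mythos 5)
Your proposal is correct, and it is worth noting that it actually does more than the paper's own proof. The paper only establishes the implication $f\in\S_s(\R^n)\Rightarrow(\ref{GSfunc})$: starting from Definition \ref{gelfandshilov} with $\beta=0$, it divides $|x|^{|\alpha|}|f(x)|\leq Ch^{|\alpha|}(\alpha!)^s$ by $(2h)^{|\alpha|}(\alpha!)^s$, takes the $1/s$-th power, and \emph{sums the resulting Taylor series} over $\alpha$ to produce the factor $e^{(|x|/(2h))^{1/s}}$, then invokes Lemma \ref{GSdef} for the Fourier side; the converse implication in the ``if and only if'' is not addressed. You instead prove the two-sided equivalence $(\ref{GSfunc})\Leftrightarrow(\ref{GSdefChar2})$ and then quote Lemma \ref{GSdef}. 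For the forward direction your mechanism differs from the paper's: rather than summing the exponential series you optimize the single term $r^{|\alpha|}e^{-\epsilon r^{1/s}}$ over $r$ and control the extremal value $(s|\alpha|/\epsilon)^{s|\alpha|}e^{-s|\alpha|}$ by Stirling together with $|\alpha|!\leq n^{|\alpha|}\alpha!$ — the same conversion, carried out by pointwise optimization instead of series summation (your constant $h=n(s/\epsilon)^s$ should really be $(ns/\epsilon)^s$, but this is immaterial). For the converse you use the standard associated-function device, taking $\alpha=Ne_{i_0}$ on the dominant coordinate and minimizing $(h\sqrt{n})^N(N!)^s/|x|^N$ over $N$, with the rounding of $N$ and the small-$|x|$ regime handled by the $\alpha=0$ bound; this is exactly the machinery the paper itself deploys inside the proof of Lemma \ref{GSdef} via the function $N(\xi)$, so your argument is consistent with the paper's toolkit while filling the gap the paper leaves in the ``only if'' direction.
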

	\begin{proof}
		By Definition \ref{gelfandshilov} we can also write
		\[ |x|^{|\alpha|} |\partial^\beta f(x)| \leq C h^{|\alpha|+|\beta|} (\alpha !)^s (\beta !)^s. \]
		Thus, we get
		\[ \left( \frac{ \left( \frac{|x|^{\frac{1}{s}}}{(2h)^{\frac{1}{s}}} \right)^{|\alpha|}}{(\alpha !)} \right)^s |\partial^\beta f(x)| \leq C h^{|\beta|} (\beta !)^s \frac{1}{2^{|\alpha|}} \]
		and 
		\[ \frac{ \left( \frac{|x|^{\frac{1}{s}}}{(2h)^{\frac{1}{s}}} \right)^{|\alpha|}}{(\alpha !)} |\partial^\beta f(x)|^{\frac{1}{s}} \leq C^{\frac{1}{s}} h^{\frac{1}{s}|\beta|} (\beta !) \frac{1}{(2^{\frac{1}{s}})^{|\alpha|}}, \]
		respectively. 
		Taking now the sum over $\alpha$ on both sides we obtain
		\[ \sum_{\substack{\alpha \in \N^n, \\ |\alpha|\geq 0}} \frac{ \left( \frac{|x|^{\frac{1}{s}}}{(2h)^{\frac{1}{s}}} \right)^{|\alpha|}}{(\alpha !)} |\partial^\beta f(x)|^{\frac{1}{s}} \leq C^{\frac{1}{s}} h^{\frac{1}{s}|\beta|} (\beta !) \sum_{\substack{\alpha \in \N^n, \\ |\alpha|\geq 0}} \frac{1}{(2^{\frac{1}{s}})^{|\alpha|}}, \]
		that is by Taylor's formula,
		\[ e^{(\frac{1}{2h})^{\frac{1}{s}} |x|^{\frac{1}{s}}} |\partial^\beta f(x)|^{\frac{1}{s}} \leq C^{\frac{1}{s}} h^{\frac{1}{s}|\beta|} (\beta !) \big( 2^{\frac{n}{s}} \big). \]
		For $\beta=0$ and $\epsilon := \frac{s}{(2h)^{\frac{1}{s}}}$ follows the desired result
		\[ |f(x)| \leq C_1 e^{-\epsilon |x|^{\frac{1}{s}}}. \]
		By Lemma \ref{GSdef} we analogously get 
		\[ |\hat{f}(\xi) | \leq C_2 e^{-\epsilon |\xi|^{\frac{1}{s}}}. \]
	\end{proof}
	After getting general characterizations of functions from the Gelfand-Shilov space $\S_s$ we can find our desired characterization for the short-time Fourier transform of those functions.
	\begin{prop}
		Let $\phi \in \S_s(\R^n)$ be a fixed window function and $f\in \S_s'(\R^n)$. Then the following statements are equivalent:
		\begin{enumerate}
			\item $f\in \S_s(\R^n)$;
			\item $V_\phi f \in \S_s(\R^n)$;
			\item there exists a constant $C>0$ such that
				\[ |V_\phi f (x,\xi)| \leq C e^{-\epsilon(|x|^{\frac{1}{s}}+|\xi|^{\frac{1}{s}})}, \qquad (x,\xi)\in \R^{2n} \]
				for some $\epsilon>0$. 
		\end{enumerate}
	\end{prop}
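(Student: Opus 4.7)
The plan is a cyclic proof $(1)\Rightarrow(2)\Rightarrow(3)\Rightarrow(1)$, relying on the characterisations of $\S_s$ already established in Lemma \ref{GSdef} and Proposition \ref{GScharact}, together with the inversion formula from Theorem \ref{inversion}.

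For $(1)\Rightarrow(2)$, I would start from the identity $V_\phi f(x,\xi)=\F(f\cdot T_x\bar\phi)(\xi)$ in Lemma \ref{STFTrepresent} and exploit the stability of $\S_s$ under pointwise multiplication, translation, and Fourier transform (the last of which is mentioned inside the proof of Lemma \ref{GSdef}). Tensorial Gelfand--Shilov bounds on $(s,x)\mapsto f(s)\overline{\phi(s-x)}$ then transfer via partial Fourier transform in $s$ to $V_\phi f$, giving $V_\phi f\in\S_s(\R^{2n})$; the companion identity \eqref{eq5} supplies the symmetric control in the $\xi$-direction. For $(2)\Rightarrow(3)$, I would apply Proposition \ref{GScharact} on $\R^{2n}$ to obtain $|V_\phi f(x,\xi)|\leq C e^{-\epsilon|(x,\xi)|^{1/s}}$ and then use the elementary bound $|x|^{1/s}+|\xi|^{1/s}\leq 2\max(|x|,|\xi|)^{1/s}\leq 2|(x,\xi)|^{1/s}$, valid for any $s>0$, to convert this into the desired product-form estimate with $\epsilon'=\epsilon/2$.

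The genuinely substantial implication is $(3)\Rightarrow(1)$. Here I would invoke the inversion formula of Theorem \ref{inversion} with $\gamma=\phi$ to write
\[ f(t)=(2\pi)^{-n/2}\|\phi\|_{L^2}^{-2}\iint V_\phi f(x,\xi)\,e^{\i\xi\cdot t}\phi(t-x)\,d\xi\,dx, \]
and estimate the integrand using the assumption on $V_\phi f$ together with $|\phi(t-x)|\leq C e^{-\delta|t-x|^{1/s}}$ (from Proposition \ref{GScharact} applied to $\phi\in\S_s$). The $\xi$-integral decouples and is finite, while the $x$-integral becomes a convolution of two Gevrey-type exponentials. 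Splitting $\epsilon=\epsilon''+(\epsilon-\epsilon'')$ with $0<\epsilon''<\min(\epsilon,\delta)$ and invoking the subadditivity $|t|^{1/s}\leq|x|^{1/s}+|t-x|^{1/s}$, valid for $s\geq 1$ since then $1/s\leq 1$, peels off a clean factor $e^{-\epsilon''|t|^{1/s}}$ and leaves a residual integral bounded uniformly in $t$, yielding $|f(t)|\leq Ce^{-\epsilon''|t|^{1/s}}$. The analogous estimate for $\hat f(\omega)$, using $\F[M_\xi T_x\phi](\omega)=e^{-\i x\cdot(\omega-\xi)}\hat\phi(\omega-\xi)$ and $|\hat\phi(\omega-\xi)|\leq Ce^{-\delta|\omega-\xi|^{1/s}}$, gives the matching frequency-side decay, whereupon Proposition \ref{GScharact} concludes that $f\in\S_s(\R^n)$.

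The main obstacle is the convolution estimate inside $(3)\Rightarrow(1)$: the exponent splitting must be chosen so that the subadditivity actually extracts the desired decay in $t$ while simultaneously keeping the residual $\int e^{-(\epsilon-\epsilon'')|x|^{1/s}}e^{-(\delta-\epsilon'')|t-x|^{1/s}}\,dx$ finite and $t$-uniform. A minor secondary bookkeeping point is $(1)\Rightarrow(2)$, where Proposition \ref{GScharact} gives only pointwise decay and one therefore has to argue via algebraic stability of $\S_s$ under the operations appearing in $V_\phi f$ rather than reading the conclusion directly off a decay estimate.
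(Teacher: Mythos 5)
Your outline is essentially the standard Gr\"ochenig--Zimmermann argument, and it is a genuinely different route from what the paper does: the paper delegates the full equivalence to Proposition 3.12 of \cite{groezimm} and only writes out the implication $(1)\Rightarrow(3)$, estimating the two integral representations of $V_\phi f$ (Lemma \ref{STFTrepresent}) directly against the decay of $f,\phi$ and $\hat f,\hat\phi$ furnished by Proposition \ref{GScharact}, with Lemma \ref{weightEstimate} supplying the splitting of the exponential weight. You instead propose a self-contained cycle $(1)\Rightarrow(2)\Rightarrow(3)\Rightarrow(1)$. Your $(2)\Rightarrow(3)$ step is clean and correct ($\max(|x|,|\xi|)\le|(x,\xi)|$ gives $|x|^{1/s}+|\xi|^{1/s}\le 2|(x,\xi)|^{1/s}$), and your $(3)\Rightarrow(1)$ convolution argument with the subadditivity of $r\mapsto r^{1/s}$ for $s\ge 1$ is the right mechanism and is consistent with the paper's standing assumption $s>1$. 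What your route buys is the hard converse direction, which the paper never proves; what the paper's direct $(1)\Rightarrow(3)$ buys is that it avoids the full strength of $(1)\Rightarrow(2)$, which in your plan is stated but not really carried out (joint Gelfand--Shilov control of all derivatives and polynomial weights of $V_\phi f$ on $\R^{2n}$ is more than the decay estimates of Proposition \ref{GScharact} provide, and is precisely the bookkeeping done in \cite{groezimm}).

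One concrete gap in $(3)\Rightarrow(1)$: Theorem \ref{inversion} is stated for $f\in L^2(\R^n)$, whereas your $f$ is a priori only in $\S_s'(\R^n)$, a space strictly larger than $\S'(\R^n)$, so neither the $L^2$ version nor the tempered-distribution version of the inversion formula applies as quoted. This is repairable: hypothesis $(3)$ puts $V_\phi f$ in $L^1\cap L^2(\R^{2n})$, so $V_\phi^*(V_\phi f)$ is a well-defined $L^2$ function by Proposition \ref{orthogonal} and the adjoint identity \eqref{adjointId}, and one then has to check that it coincides with $f$ in $\S_s'$ (weak identity against test functions in $\S_s$). You should say this explicitly; without it the pointwise integral formula for $f(t)$ that your whole estimate rests on is not justified.
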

	\begin{proof}
		Cf. Proposition 3.12 in \cite{groezimm}. At this point we only want to show that if $f\in S_s(\R^n)$, then we can find the characterization 
		\[ |V_\phi f (x,\xi)| \leq C e^{-\epsilon(|x|^{\frac{1}{s}}+|\xi|^{\frac{1}{s}})}. \]
		By already taking into account Lemma \ref{weightEstimate} and Proposition \ref{GScharact} we have 
		\begin{eqnarray*}
			|V_\phi f(x,\xi) | & = & (2\pi)^{-\frac{n}{2}} \left| \int_{\R^n} f(y) \overline{\phi(y-x)} e^{-\i y\cdot \xi} \, dy \right| \\
				& \leq & (2\pi)^{-\frac{n}{2}} \int_{\R^n} |f(y)| |\overline{\phi(y-x)}| |e^{-\i y\cdot \xi}| \, dy \\
				& \leq & C (2\pi)^{-\frac{n}{2}} \int_{\R^n} e^{-\epsilon |y|^{\frac{1}{s}}} e^{-\epsilon |y-x|^{\frac{1}{s}}} \, dy \\
				& \leq & C (2\pi)^{-\frac{n}{2}} \int_{\R^n} e^{-\epsilon |y|^{\frac{1}{s}}} e^{\epsilon |y|^{\frac{1}{s}}} e^{-\epsilon |x|^{\frac{1}{s}}} e^{-\epsilon \delta \min (|y-x|, |y|)^{\frac{1}{s}}} \, dy \\
				& = & C (2\pi)^{-\frac{n}{2}} e^{-\epsilon |x|^{\frac{1}{s}}} \int_{\R^n} e^{-\epsilon \delta \min (|y-x|, |y|)^{\frac{1}{s}}} \, dy \\
				& \leq & C_1 e^{-\epsilon |x|^{\frac{1}{s}}}.
		\end{eqnarray*}
		Stressing additionally Lemma \ref{STFTrepresent} it follows 
		\begin{eqnarray*}
			|V_\phi f(x,\xi) | & = & | e^{-\i x\cdot \xi}| |V_{\hat{\phi}} \hat{f} (\xi, -x)| \\
				& = & (2\pi)^{-\frac{n}{2}} \left| \int_{\R^n} \hat{f}(\eta) \overline{\hat{\phi}(\eta-\xi)} e^{\i \eta \cdot x} \, d\eta \right| \\
				& \leq & (2\pi)^{-\frac{n}{2}} \int_{\R^n}  |\hat{f}(\eta)| |\overline{\hat{\phi}(\eta-\xi)}| \, d\eta \\
				& \leq & C (2\pi)^{-\frac{n}{2}} \int_{\R^n} e^{-\epsilon |\eta|^{\frac{1}{s}}} e^{-\epsilon |\eta-\xi|^{\frac{1}{s}}} \, d\eta \\
				& \leq & C (2\pi)^{-\frac{n}{2}} \int_{\R^n} e^{-\epsilon |\eta|^{\frac{1}{s}}} e^{\epsilon |\eta|^{\frac{1}{s}}} e^{-\epsilon |\xi|^{\frac{1}{s}}} e^{-\epsilon \delta \min (|\eta-\xi|, |\eta|)^{\frac{1}{s}}} \, d\eta \\
				& = & C (2\pi)^{-\frac{n}{2}} e^{-\epsilon |\xi|^{\frac{1}{s}}} \int_{\R^n} e^{-\epsilon \delta \min (|\eta-\xi|, |\eta|)^{\frac{1}{s}}} \, d\eta \\
				& \leq & C_1 e^{-\epsilon |\xi|^{\frac{1}{s}}}.
		\end{eqnarray*}
		Putting these two results together we obtain
		\begin{eqnarray*}
			\frac{1}{2} (|V_\phi f(x,\xi)| + |V_\phi f(x,\xi)|) & \leq & \frac{1}{2} C_1 ( e^{-\epsilon |x|^{\frac{1}{s}}} + e^{-\epsilon |\xi|^{\frac{1}{s}}} ) \\
				& \leq & C_2 e^{-\epsilon_0 (|x|^{\frac{1}{s}} + |\xi|^{\frac{1}{s}})},
		\end{eqnarray*}
		which completes the proof.
	\end{proof}
		
 Thus, we have shown that it is reasonable to use Gelfand-Shilov spaces and we define the so-called Gevrey-modulation space as follows:
	\begin{df} \label{modExpWeight}
		The integrability parameters are given by $1\leq p,q \leq \infty$. Let $\phi\in \S_s(\R^n)\setminus \{0\}$ be a fixed window and assume $s > 1$ to be the weight parameter. Then the \emph{Gevrey-modulation space} $\mathcal{GM}^{p,q}_{s}(\R^n)$ is the set
		\[ \mathcal{GM}^{p,q}_{s} (\R^n) := \{ f\in \S_s'(\R^n): \|f\|_{\mathcal{GM}^{p,q}_{s}(\R^n)} < \infty \}, \]
		where the norm is defined as
		\[ \|f\|_{\mathcal{GM}^{p,q}_{s}(\R^n)} = \left( \int_{\R^n} \left( \int_{\R^n} |V_\phi f(x,\xi) |^p dx \right)^{\frac{q}{p}} e^{q |\xi|^{\frac{1}{s}}} d\xi \right)^{\frac{1}{q}} \]
		with obvious modifications when $p=\infty$ and/or $q=\infty$.
	\end{df}
	\begin{rm}
		Naturally we can define the norm in terms of the frequency-uniform decomposition
		\[ \|f\|_{\mathcal{GM}^{p,q}_{s}(\R^n)} = \left( \sum_{k\in\Z^n} e^{q |k|^{\frac{1}{s}}} \|\Box_k f\|_{L^p}^q \right)^{\frac{1}{q}}. \]
		Note that for the frequency-uniform decomposition we also need to use a function $\rho\in \S_s(\R^n)\setminus \{0\}$, cf. Definition \ref{defdecomp}. \\
		These two norms of Gevrey-modulation spaces are equivalent due to analogous arguments as in the proof of Proposition \ref{normequivalence}. Remark that applying the mean value theorem yields different constants when turning the integral into a sum. 
	\end{rm}

	\subsection{Embedding Results}
		Before treating algebra and superposition problems we shortly formulate an embedding result which usually arises together with these problems. Remark that we do not care about optimality since we will get a sufficiently good statement for Gevrey-modulation spaces introduced in Definition \ref{modExpWeight}. 
	\begin{prop} \label{SobolevL}
		Assume $N$ to be an integer. Then for every $\epsilon>0$ it holds
		\[ H_{N+\epsilon}^\infty \subseteq L_N^\infty \subseteq H_{N-\epsilon}^\infty, \]
		where the space $L_N^\infty$ consists of all functions which have bounded derivatives up to order $N$, i.e., if $f\in L_N^\infty(\R^n)$, then $\partial^\alpha f \in L^\infty(\R^n)$ for $|\alpha|\leq N$.
	\end{prop}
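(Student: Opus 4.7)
The plan is to use the Bessel potential kernel $G_s := \F^{-1}(\langle\xi\rangle^{-s})$ for $s > 0$. I shall invoke the classical facts that $G_s \in L^1(\R^n)$, that $G_s$ is smooth away from the origin with exponential decay, and that it behaves like $|x|^{s-n}$ near $0$ (with at worst a logarithmic correction). As a consequence, $\partial^\alpha G_s \in L^1(\R^n)$ whenever $|\alpha| < s$. These are the only external inputs beyond Young's inequality.

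For the first inclusion, let $f \in H^\infty_{N+\epsilon}$, so that $g := \langle D \rangle^{N+\epsilon} f \in L^\infty$ and $f = G_{N+\epsilon} * g$. For every multi-index $\alpha$ with $|\alpha| \leq N < N+\epsilon$ we have $\partial^\alpha G_{N+\epsilon} \in L^1$, so Young's inequality gives $\partial^\alpha f = (\partial^\alpha G_{N+\epsilon}) * g \in L^\infty$. Hence $f \in L^\infty_N$.

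For the second inclusion I treat the parity of $N$ separately. If $N$ is even then $\langle\xi\rangle^N = (1+|\xi|^2)^{N/2}$ is a polynomial in $\xi$, so $\langle D\rangle^N f = (1-\Delta)^{N/2} f$ is a linear combination of derivatives $\partial^\beta f$ with $|\beta| \leq N$ and therefore lies in $L^\infty$; then
\[
\langle D\rangle^{N-\epsilon} f = G_\epsilon * \langle D\rangle^N f \in L^1 * L^\infty \subseteq L^\infty.
\]
For $N$ odd I factor $\langle D\rangle^{N-\epsilon} = \langle D\rangle^{1-\epsilon} \langle D\rangle^{N-1}$. The even piece $\langle D\rangle^{N-1} = (1-\Delta)^{(N-1)/2}$ is a differential operator of order $N-1$, so $h := \langle D\rangle^{N-1} f$ lies in $L^\infty_1$ (its first derivatives involve derivatives of $f$ of total order at most $N$). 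To handle $\langle D\rangle^{1-\epsilon}$ on $L^\infty_1$ I exploit the identity $\langle\xi\rangle^2 = 1 + \sum_j \xi_j^2$ to write
\[
\langle\xi\rangle^{1-\epsilon} = \langle\xi\rangle^{-1-\epsilon} + \sum_{j=1}^n (\i\xi_j)\bigl(-\i\xi_j\langle\xi\rangle^{-1-\epsilon}\bigr),
\]
which on the physical side gives $\langle D\rangle^{1-\epsilon} h = G_{1+\epsilon} * h - \sum_{j=1}^n (\partial_j G_{1+\epsilon}) * \partial_j h$; since $G_{1+\epsilon}$ and $\partial_j G_{1+\epsilon}$ belong to $L^1$ (the latter because $1 < 1+\epsilon$) and $h, \partial_j h$ belong to $L^\infty$, this element lies in $L^\infty$ as required.

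The main obstacle is precisely the second inclusion for odd $N$: the operator $\langle D\rangle^{N-\epsilon}$ is genuinely non-local and cannot be expressed as a polynomial differential operator, so one cannot simply extract the $L^\infty$-boundedness of $\partial^\alpha f$ termwise. The Fourier-algebraic decomposition above is the technical heart of the argument: it trades one integer-order derivative of $f$ against the fractional smoothing supplied by $G_{1+\epsilon}$. Everything else reduces to the two standard $L^1$-bounds on $G_s$ and $\partial^\alpha G_s$ recorded at the start of the plan.
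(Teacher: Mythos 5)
Your proof is correct, but it takes a genuinely different route from the paper's. The paper deduces both inclusions from a chain of cited Besov-space embeddings, namely $L^\infty_N \subseteq B^{\infty,\infty}_N \subseteq B^{\infty,1}_{N-\epsilon} \subseteq H^\infty_{N-\epsilon}$ and the analogous chain for the other inclusion, leaning on Triebel, Bergh--L\"ofstr\"om and Toft for each link --- in particular on the embedding $L^\infty_N \subseteq B^{\infty,\infty}_N$, which the paper itself only describes as ``implicitly given'' in the literature. You instead work directly with the Bessel kernel $G_s = \F^{-1}(\langle\xi\rangle^{-s})$ and Young's inequality, which makes the argument self-contained modulo the two standard kernel facts you record at the outset (local behaviour $|x|^{s-n-|\alpha|}$ and exponential decay, hence $\partial^\alpha G_s \in L^1$ for $|\alpha|<s$). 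The price you pay is the parity case distinction in the second inclusion and the identity $\langle\xi\rangle^{1-\epsilon} = \langle\xi\rangle^{-1-\epsilon} + \sum_{j}(\i\xi_j)\bigl(-\i\xi_j\langle\xi\rangle^{-1-\epsilon}\bigr)$, which correctly trades one integer-order derivative of $f$ against the integrability of $\partial_j G_{1+\epsilon}$; this is precisely the step that the Besov machinery hides inside the embedding $B^{\infty,\infty}_N \subseteq B^{\infty,1}_{N-\epsilon}$. Both arguments establish the proposition (and neither claims optimality, which the paper explicitly disavows); yours is more elementary and makes the source of the $\epsilon$-loss visible, while the paper's is shorter at the cost of importing the Besov scale and several external references.
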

	\begin{proof}
		Let $B_s^{p,q}$ be the \emph{Besov space} with Lebesgue exponents $p,q$ and Sobolev parameter $s$. For details see \cite{bergh}. By argumentations in \cite{triebel} and \cite{toftEmbedding} we obtain the inclusions
		\[ B_{s+\epsilon}^{p,\infty} \subseteq B_s^{p,1} \]
		and
		\[ B_s^{p,1} \subseteq H_s^p \subseteq B_s^{p,\infty}. \]
		The embedding $L_N^\infty \subseteq B_N^{\infty,\infty}$ is implicitly given in chapter 6 in \cite{bergh}, the other embedding is explicitly given in \cite{bergh}. Hence it follows
		\[ L_N^\infty \subseteq B_N^{\infty,\infty} \subseteq B_{N-\epsilon}^{\infty,1} \subseteq H_{N-\epsilon}^\infty. \]
		Analogously we get $H^\infty_{N+\epsilon} \subseteq L_N^\infty$. 
	\end{proof}
	
	\begin{prop} \label{SobolevM}
		Let $\tilde{s}\in\R$ and $1\leq p,q \leq \infty$. Define $\theta_1=\theta_1(p,q)$ and $\theta_2=\theta_2(p,q)$ as follows
		\begin{eqnarray*}
			\theta_1(p,q) = \max(0,q^{-1}-\min(p^{-1},p'^{-1})), \\
			\theta_2(p,q) = \min(0,q^{-1}-\max(p^{-1},p'^{-1})),
		\end{eqnarray*}
		where $p'$ is the conjugate exponent of $p$. Then 
		\[ H^p_{\tilde{s}+\mu n \theta_1(p,q)}(\R^n) \subseteq M^{p,q}_{\tilde{s}}(\R^n) \subseteq H^p_{\tilde{s}+\mu n \theta_2(p,q)}(\R^n) \]
		for $\mu>1$. 
	\end{prop}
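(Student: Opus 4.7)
The plan is first to reduce to the case $\tilde s=0$ via Bessel-potential homeomorphisms, and then to compare the two decompositions on the Fourier side by means of Bernstein's Lemma. By Proposition \ref{ModHomeo} the operator $\langle D\rangle^{r}$ is a homeomorphism from $M^{p,q}_{\tilde s+r}$ onto $M^{p,q}_{\tilde s}$, and the analogous statement for Bessel-potential Sobolev spaces is standard, so it suffices to show
\[ H^p_{\mu n\theta_1(p,q)}\subseteq M^{p,q}_0\subseteq H^p_{\mu n\theta_2(p,q)}. \]

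Next I would replace the continuous modulation norm by the equivalent uniform-decomposition norm of Definition \ref{defdecomp}, legitimate by Proposition \ref{normequivalence}, and group the indices $k\in\Z^n$ into dyadic shells $A_j:=\{k:2^{j-1}\leq\langle k\rangle<2^j\}$ of cardinality $|A_j|\sim 2^{jn}$. For the embedding $H^p_{\mu n\theta_1}\subseteq M^{p,q}_0$, I would observe that the multiplier $\sigma_k\langle\xi\rangle^{-\mu n\theta_1}$ has $H^s$-norm $\lesssim\langle k\rangle^{-\mu n\theta_1}$ for any fixed $s>n/2$, since $\sigma_k$ has support in a unit cube around $k$ with derivatives bounded independently of $k$, and $\langle\xi\rangle\sim\langle k\rangle$ on that support. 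Applying Bernstein's Lemma \ref{bernstein} to this multiplier with input $\langle D\rangle^{\mu n\theta_1}f$ then yields
\[ \|\Box_k f\|_{L^p}\lesssim\langle k\rangle^{-\mu n\theta_1}\|f\|_{H^p_{\mu n\theta_1}}. \]
Summing in $\ell^q(k)$ shell by shell, and applying Hölder's inequality on each $A_j$ to pay for the cardinality $2^{jn}$ against the weight $\mu n\theta_1$, produces the first inclusion; the slack $\mu>1$ is precisely what makes the resulting geometric series in $j$ absolutely convergent. For the reverse embedding $M^{p,q}_0\subseteq H^p_{\mu n\theta_2}$ I would run the symmetric argument: write $\langle D\rangle^{\mu n\theta_2}f=\sum_k\F^{-1}\bigl(\sigma_k\langle\xi\rangle^{\mu n\theta_2}\hat f\bigr)$, apply Bernstein once more to extract the factor $\langle k\rangle^{\mu n\theta_2}\leq 1$, and reassemble into an $L^p$-function either by Minkowski's inequality when $q\leq p$ or by duality against the first embedding with conjugate exponents $(p',q')$ when $q>p$.

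The main obstacle is the case analysis buried in $\theta_1(p,q)$ and $\theta_2(p,q)$: the expressions $\min(p^{-1},p'^{-1})$ and $\max(p^{-1},p'^{-1})$ change form according to whether $p\leq 2$ or $p\geq 2$, and the particular Hölder exponent used to combine $\ell^q(k)$ with the $L^p$-norm on each dyadic shell depends on the relative positions of $p$, $q$ and $2$. In each of the resulting sub-regimes one must verify that the weight $\mu n\theta_i$ matches exactly the shell cardinality $2^{jn}$ after Hölder. The parameter $\mu>1$ provides a uniform small slack that makes every such computation converge at once and simultaneously absorbs the endpoint difficulties at $p\in\{1,\infty\}$, where the Littlewood-Paley characterization of $H^p_s$ is non-standard --- this non-sharpness is consistent with Proposition \ref{SobolevL}, which already costs an arbitrary $\epsilon>0$ of regularity.
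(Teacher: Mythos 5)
The paper does not prove this proposition internally; it simply cites formula (0.4) in Toft's work on embeddings for weighted modulation spaces, so any self-contained argument has to reconstruct Toft's reasoning. Your sketch does not do this: it has a genuine gap at the point where the exponent $\min(p^{-1},p'^{-1})$ is supposed to appear. The only quantitative input you retain about $f$ is the single-block Bernstein bound $\|\Box_k f\|_{L^p}\lesssim\langle k\rangle^{-\mu n\theta_1}\|f\|_{H^p_{\mu n\theta_1}}$, in which every block is estimated by the same global quantity. From that information alone, the best one can conclude is $\|f\|_{M^{p,q}_0}\lesssim\bigl(\sum_k\langle k\rangle^{-\mu n\theta_1 q}\bigr)^{1/q}\|f\|_{H^p_{\mu n\theta_1}}$, and the sum converges if and only if $\mu\theta_1 q>1$. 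Grouping into dyadic shells and applying H\"older does not change this: on the shell $A_j$ you still get $2^{jn}\cdot 2^{-j\mu n\theta_1 q}$, and the geometric series in $j$ converges under exactly the same condition. Take $p=2$, $q=1$: then $\theta_1=\tfrac12$ and the condition $\mu\theta_1 q>1$ forces $\mu>2$, whereas the proposition is claimed for every $\mu>1$. So your method provably cannot reach the stated exponent; it only yields the weaker embedding $H^p_{n/q+\epsilon}\subseteq M^{p,q}_0$, which misses the gain of $n\min(p^{-1},p'^{-1})$ entirely for all $1<p<\infty$. A symmetric objection applies to your treatment of the second inclusion.

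The missing idea is the pair of lossless endpoint embeddings $M^{p,\,p\wedge p'}\subseteq L^p\subseteq M^{p,\,p\vee p'}$, equivalently the fact that the sequence $\{\|\Box_k f\|_{L^p}\}_k$ lies in $\ell^{p\vee p'}$ with norm controlled by $\|f\|_{L^p}$. This is where $p'$ enters, and it cannot come from Lemma \ref{bernstein}, which is an $L^r\to L^r$ multiplier estimate with no reference to the conjugate exponent; it is obtained from Plancherel's theorem in the case $p=2$ (where $\sum_k\|\Box_k f\|_{L^2}^2\sim\|f\|_{L^2}^2$) together with the trivial cases $p\in\{1,\infty\}$ and complex interpolation, or by duality. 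Once these endpoints are in hand, the remaining step is the one you do describe correctly: pass from $\ell^{p\vee p'}$ to $\ell^q$ (respectively from $\ell^q$ to $\ell^{p\wedge p'}$) by H\"older in $k$, at the cost of $n(q^{-1}-(p\vee p')^{-1})$ derivatives times the safety factor $\mu>1$ that makes $\sum_k\langle k\rangle^{-\mu n\theta_1 r}$ converge for the relevant conjugate index $r$. Your reduction to $\tilde s=0$ via Proposition \ref{ModHomeo} and your use of Proposition \ref{normequivalence} are both fine; the proof is incomplete only, but essentially, because the interpolation/duality input is absent.
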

	\begin{proof}
		Cf. (0.4) in \cite{toftWeight}.
	\end{proof}
	
	\begin{prop} \label{embedding}
		Let $1\leq p,q \leq \infty$ and $\tilde{s} >\mu n$ with $\mu>1$, then $M^{p,q}_{\tilde{s}} (\R^n) \subset L^\infty(\R^n)$.
	\end{prop}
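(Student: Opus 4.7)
The plan is to chain together the two embedding results just proved, after first reducing to the most permissive choice of integrability parameters. The key observation is that $L^\infty = L^\infty_0$ in the notation of Proposition \ref{SobolevL}, so what we really need is a Sobolev embedding $H^p_\sigma \subseteq L^\infty$ for some $\sigma > 0$, and Proposition \ref{SobolevL} only provides this directly when $p = \infty$. Hence it is natural to force $p = q = \infty$ as early as possible.

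First I would use the monotonicity of modulation spaces in the integrability parameters (Proposition \ref{ModInclusion}, part 1, combined with the norm equivalence in Proposition \ref{normequivalence}) to obtain, for the same weight $\tilde{s}$ and for all $1 \leq p,q \leq \infty$,
\[ M^{p,q}_{\tilde{s}}(\R^n) \subseteq M^{\infty,\infty}_{\tilde{s}}(\R^n). \]
This reduces the proposition to showing $M^{\infty,\infty}_{\tilde{s}}(\R^n) \subseteq L^\infty(\R^n)$ under the hypothesis $\tilde{s} > \mu n$.

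Next I would apply Proposition \ref{SobolevM} with $p = q = \infty$. In this case $p' = 1$, so $\max(p^{-1}, p'^{-1}) = 1$ and $q^{-1} = 0$, giving $\theta_2(\infty, \infty) = \min(0, -1) = -1$. Therefore
\[ M^{\infty,\infty}_{\tilde{s}}(\R^n) \subseteq H^\infty_{\tilde{s} - \mu n}(\R^n). \]
Since $\tilde{s} > \mu n$ by assumption, the Sobolev exponent $\tilde{s} - \mu n$ is strictly positive.

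Finally, I would apply Proposition \ref{SobolevL} with $N = 0$: taking $\epsilon = \tilde{s} - \mu n > 0$ yields
\[ H^\infty_{\tilde{s}-\mu n}(\R^n) = H^\infty_{0 + \epsilon}(\R^n) \subseteq L^\infty_0(\R^n) = L^\infty(\R^n), \]
which closes the chain. The argument is essentially bookkeeping with the two embeddings already at hand; the only real choice is to push all parameters up to $\infty$ first so that Proposition \ref{SobolevL} applies, and the only place where the hypothesis $\tilde{s} > \mu n$ gets consumed is in matching the loss of $\mu n$ smoothness incurred in the worst-case value $\theta_2(\infty,\infty) = -1$. There is no substantive obstacle once this reduction is seen.
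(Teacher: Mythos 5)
Your proposal is correct and follows essentially the same route as the paper: reduce to $p=q=\infty$ via the monotonicity of modulation spaces, apply Proposition \ref{SobolevM} (with $\theta_2(\infty,\infty)=-1$) to land in $H^\infty_{\tilde{s}-\mu n}$, and then embed into $L^\infty$. The only cosmetic difference is that you close the last step by invoking Proposition \ref{SobolevL} with $N=0$, whereas the paper re-traces the chain $H^\infty_{\tilde{s}-\mu n}\subset B^\infty_{\tilde{s}-\mu n}\subset L^\infty$ through the cited Besov embeddings --- the same argument that underlies Proposition \ref{SobolevL} in the first place.
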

	\begin{proof}
		Due to Proposition \ref{ModInclusion} it suffices to prove $M^\infty_{\tilde{s}} \subset L^\infty$. \\
		Now Proposition \ref{SobolevM} yields 
		\[ M^\infty_{\tilde{s}} (\R^n) \subset H^\infty_{{\tilde{s}}-\mu n}(\R^n). \]
		Note that for the integrability parameters it holds $p=q=\infty$. The inclusion 
		\[ H^\infty_{{\tilde{s}}-\mu n}(\R^n) \subset B^\infty_{{\tilde{s}}-\mu n}(\R^n) \]
		is given in \cite{toftEmbedding}. Since $\tilde{s}>\mu n$ it follows by Theorem 3.3.1 in \cite{sickel} that
		\[ B^\infty_{\tilde{s}-\mu n} (\R^n) \subset L^\infty(\R^n) \]
		which completes the proof.
	\end{proof}
	At this point we immediately see that the strong weights in Gevrey-modulation spaces improve the situation. 
	\begin{cor} 
		Let $1\leq p,q \leq \infty$ and $s>1$, then $\mathcal{GM}^{p,q}_s (\R^n) \subset L^\infty(\R^n)$.
	\end{cor}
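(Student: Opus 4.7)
The plan is to reduce the corollary to Proposition \ref{embedding} via a simple comparison of weights. The key observation is that, because $s>1$ gives $1/s>0$, the Gevrey weight $e^{|k|^{1/s}}$ dominates every polynomial weight: for any $\tilde s \in \R$ there is a constant $C_{\tilde s}>0$ with
\[ \langle k \rangle^{\tilde s} \leq C_{\tilde s}\, e^{|k|^{1/s}} \qquad \text{for all } k\in\Z^n. \]
Using the frequency-uniform decomposition norm from the remark after Definition \ref{modExpWeight}, this inequality immediately yields the continuous embedding
\[ \mathcal{GM}^{p,q}_{s}(\R^n) \hookrightarrow M^{p,q}_{\tilde s}(\R^n) \qquad \text{for every } \tilde s \in \R. \]

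Given this, I would choose $\tilde s > \mu n$ for some fixed $\mu>1$ (for instance $\tilde s = 2n$ with $\mu$ close to $1$). Proposition \ref{embedding} then gives $M^{p,q}_{\tilde s}(\R^n) \subset L^\infty(\R^n)$, and composing with the embedding above produces the desired inclusion $\mathcal{GM}^{p,q}_{s}(\R^n) \subset L^\infty(\R^n)$, together with a norm estimate $\|f\|_{L^\infty} \lesssim \|f\|_{\mathcal{GM}^{p,q}_{s}}$.

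There is essentially no difficult step: the only thing to verify carefully is the weight comparison, which is just an elementary exercise in analysis (the function $t \mapsto e^{t^{1/s}}/(1+t)^{\tilde s/2}$ tends to infinity as $t\to\infty$ and is continuous and positive on $[0,\infty)$, hence bounded below by a positive constant). The use of the decomposition norm is convenient because it reduces the comparison to a single scalar inequality on $\Z^n$ rather than an integral estimate, but one could equally carry it out with the continuous Definition \ref{modCont} adapted to the exponential weight. The statement is non-quantitative in $\tilde s$, so any admissible choice works and no optimisation is required.
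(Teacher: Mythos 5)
Your proposal is correct and follows essentially the same route as the paper: compare the Gevrey weight $e^{|\xi|^{1/s}}$ with a polynomial weight $\langle\xi\rangle^{\tilde s}$ for some $\tilde s>\mu n$, deduce $\mathcal{GM}^{p,q}_s\subset M^{p,q}_{\tilde s}$, and conclude via Proposition \ref{embedding}. Your write-up is in fact slightly more careful than the paper's, since you make the comparison uniform over all frequencies with an explicit constant rather than only "for large $\xi$".
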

	\begin{proof}
		Fix the weight parameter $\tilde{s}>\mu n$ as in Proposition \ref{embedding} and set $\tilde{w}(\xi) = \langle \xi \rangle^{\tilde{s}}$. Then $w(\xi) = e^{| \xi|^{\frac{1}{s}}} > \tilde{w}(\xi)$ for arbitrary numbers $s>1$ and large $\xi\in\R^n$. This gives
		\[ \mathcal{GM}^{p,q}_s \subset M^{p,q}_{\tilde{s}} \subset L^\infty. \]
	\end{proof}
	
	\subsection{Multiplication Algebras}
	The next step consists of proving an essential property for Gevrey-modulation spaces. In order to obtain superposition results we need the algebra property of those spaces. Iwabuchi already showed some similar results in \cite{iwabuchi}. But he imposed conditions on the integrability parameters. Since the introduced Gevrey-modulation spaces $\mathcal{GM}^{p,q}_{s}$ have a better behavior than the usual modulation spaces defined in Definition \ref{defdecomp} we can show the fundamental algebra property for $\mathcal{GM}^{p,q}_{s}$. \\
	First of all we need the following lemma which is stated in \cite{brs}. 
	\begin{lma} \label{weightEstimate}
		If $s>1$, then it holds
		\[ e^{|k|^{\frac{1}{s}}} \leq e^{|l|^{\frac{1}{s}}} e^{|l-k|^{\frac{1}{s}}} e^{-\delta \min(|l-k|, |l|)^{\frac{1}{s}}}, \]
		where $k,l\in \Z^n$ and $0<\delta<1$.
	\end{lma}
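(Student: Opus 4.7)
The plan is to take logarithms and reduce the claim to the scalar inequality
\[ |k|^{1/s} \leq |l|^{1/s} + |l-k|^{1/s} - \delta \min(|l-k|, |l|)^{1/s}. \]
Because the map $t \mapsto t^{1/s}$ is monotone on $[0,\infty)$ and the triangle inequality gives $|k| \leq |l| + |l-k|$, it is enough to prove the sharper inequality
\[ (a+b)^{1/s} \leq a^{1/s} + b^{1/s} - \delta \min(a,b)^{1/s} \qquad (a,b \geq 0). \]
So the proof reduces to a one-variable estimate that I can handle by symmetry: assume without loss of generality $a \leq b$, so that $\min(a,b) = a$, and the target becomes
\[ (a+b)^{1/s} - b^{1/s} \leq (1-\delta)\, a^{1/s}. \]

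To obtain this last bound, I would apply the mean value theorem to the function $t \mapsto t^{1/s}$ on the interval $[b, a+b]$. This produces some $\xi \in (b, a+b)$ with
\[ (a+b)^{1/s} - b^{1/s} = \frac{a}{s}\, \xi^{\,1/s - 1}. \]
Since $s > 1$, the exponent $1/s - 1$ is negative, so $\xi^{1/s - 1}$ is decreasing in $\xi$; using $\xi \geq b \geq a$ gives $\xi^{1/s-1} \leq a^{1/s - 1}$, and therefore
\[ (a+b)^{1/s} - b^{1/s} \leq \frac{1}{s}\, a^{1/s}. \]
Choosing any $\delta$ with $0 < \delta \leq 1 - 1/s$ (which lies in $(0,1)$ because $s>1$) yields $1/s \leq 1 - \delta$, which is precisely the required estimate.

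Collecting the pieces, I would then unwind: replace $a,b$ by the pair $\{|l|, |l-k|\}$, combine with $|k|^{1/s} \leq (|l| + |l-k|)^{1/s}$ from the triangle inequality, and exponentiate to recover the multiplicative form in the statement. No step looks hard; the only point worth being careful about is that $\delta$ cannot actually be taken arbitrarily close to $1$ — the argument forces $\delta \leq 1 - 1/s$, and this is precisely why the hypothesis $s > 1$ is needed (so that such a $\delta \in (0,1)$ exists). Concavity of $t \mapsto t^{1/s}$ is the underlying geometric reason for the gain, and the MVT is just a convenient way to read it off quantitatively.
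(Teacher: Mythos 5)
Your argument is correct, and it is worth noting that the paper itself gives no proof of this lemma at all --- it is only quoted from \cite{brs} --- so your self-contained derivation is a genuine addition rather than a variant of an existing argument. The chain (take logarithms, use monotonicity of $t\mapsto t^{1/s}$ together with $|k|\leq |l|+|l-k|$, then prove $(a+b)^{1/s}\leq a^{1/s}+b^{1/s}-\delta\min(a,b)^{1/s}$ via the mean value theorem on $[b,a+b]$ with $a\leq b$) is exactly the right reduction, and each step checks out: since $1/s-1<0$ the intermediate point satisfies $\xi^{1/s-1}\leq a^{1/s-1}$, giving $(a+b)^{1/s}-b^{1/s}\leq \tfrac{1}{s}a^{1/s}$ and hence the claim for every $\delta\leq 1-\tfrac{1}{s}$. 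Your closing caveat is not a pedantic aside but an actual correction to the statement as printed: the inequality is \emph{false} for arbitrary $\delta\in(0,1)$ (take $n=1$, $l=1$, $k=2$, $s=2$: the log-form requires $\sqrt{2}\leq 2-\delta$, which fails for $\delta>2-\sqrt{2}$), so the lemma must be read as asserting the existence of some admissible $\delta\in(0,1)$ depending on $s$, which is all that is needed later in the proof of Theorem \ref{algebra}, where $\delta$ only enters through the convergence of sums of the form $\sum_j e^{-\delta|j|^{1/s}q'}$. In short: correct proof, and a correctly identified (and quantified) restriction on $\delta$ that the paper's formulation glosses over.
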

	We also stress a version of Nikol'skij's inequality. 
	\begin{lma} \label{nikolskij}
		Let $1\leq p \leq q \leq\infty$ and $f$ be an integrable function with $\supp \F f(\xi) \subset B_r(\R^n) :=\{\xi\in \R^n: |\xi|\leq r\}$, i.e., the compact Fourier support of $f$ is contained in a ball around the origin with radius $r>0$. Then it holds
		\[ \|f\|_{L^q} \leq C r^{n(\frac{1}{p}-\frac{1}{q})} \|f\|_{L^p} \]
		with a constant $C>0$.
	\end{lma}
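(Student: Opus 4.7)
The plan is to reduce the inequality to Young's convolution inequality by exploiting the band-limitedness of $f$. The central observation is that if $\hat{f}$ is supported in the ball $B_r$, then $f$ is unchanged by convolution against any function whose Fourier transform equals $1$ on $B_r$, and this lets us trade the $L^q$-norm on the left for an $L^p$-norm and a controlled $L^t$-norm of a bump function.

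First I would fix once and for all a function $\psi\in\mathcal{S}(\R^n)$ (equivalently, one can pick $\psi\in\mathcal{S}_s(\R^n)$) whose Fourier transform $\hat\psi$ is compactly supported and satisfies $\hat\psi(\xi)=1$ for $|\xi|\le 1$. Then I would introduce the rescaled window $\psi_r(x):=r^n\psi(rx)$, so that, accounting for the $(2\pi)^{-n/2}$-normalization used in the paper, $\hat{\psi_r}(\xi)=\hat\psi(\xi/r)$ and hence $\hat{\psi_r}\equiv 1$ on $B_r$. Because $\supp\hat f\subset B_r$, the multiplicative identity $\hat f\cdot\hat{\psi_r}=\hat f$ holds, which translates into the convolution identity $f=c_n\, f\ast\psi_r$ for a dimensional constant $c_n$ stemming from the chosen Fourier normalization.

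Next I would apply Young's convolution inequality $\|g\ast h\|_{L^q}\le\|g\|_{L^p}\|h\|_{L^t}$ with the exponent $t$ determined by the relation $\tfrac1p+\tfrac1t=1+\tfrac1q$, i.e.\ $\tfrac1t=1-\bigl(\tfrac1p-\tfrac1q\bigr)$; the hypothesis $p\le q$ guarantees $t\in[1,\infty]$. This yields
\[
\|f\|_{L^q}=c_n\,\|f\ast\psi_r\|_{L^q}\le c_n\,\|\psi_r\|_{L^t}\,\|f\|_{L^p}.
\]
A direct dilation computation gives $\|\psi_r\|_{L^t}=r^{n(1-1/t)}\|\psi\|_{L^t}=r^{n(1/p-1/q)}\|\psi\|_{L^t}$, and absorbing $c_n\|\psi\|_{L^t}$ into the constant $C$ produces exactly the stated bound.

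I do not expect any significant obstacle: the only subtle points are verifying that $t$ lies in $[1,\infty]$ (which is guaranteed by $p\le q$), and bookkeeping the constant $c_n$ coming from the $(2\pi)^{-n/2}$-normalization of the Fourier transform used throughout this paper. Both the limit cases $p=q$ (trivial with $C=1$) and $q=\infty$ (use Hausdorff--Young or directly $\|f\|_{L^\infty}\le\|\hat f\|_{L^1}$ together with Hölder on the support $B_r$) fit into the same framework without modification.
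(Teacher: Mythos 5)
Your proof is correct, and while it starts from the same reproducing identity as the paper -- namely $f = c_n\, f\ast\psi_r$ with $\hat{\psi_r}\equiv 1$ on $B_r$ -- it extracts the $L^p\to L^q$ bound by a genuinely different mechanism. The paper first proves the endpoint case $\|f\|_{L^\infty}\leq C\|f\|_{L^p}$ via H\"older applied to the convolution integral, then obtains the general case from the elementary interpolation $\|g\|_{L^q}\leq \|g\|_{L^\infty}^{1-p/q}\|g\|_{L^p}^{p/q}$, and finally produces the factor $r^{n(1/p-1/q)}$ by rescaling the function $f$ itself (working with $f(r^{-1}x)$, whose Fourier support lies in $B_1$). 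You instead invoke Young's convolution inequality with the intermediate exponent $t$ given by $\tfrac1p+\tfrac1t=1+\tfrac1q$ and put the entire $r$-dependence into the dilation $\|\psi_r\|_{L^t}=r^{n(1-1/t)}\|\psi\|_{L^t}$. Your route is shorter and makes the origin of the exponent $n(\tfrac1p-\tfrac1q)$ transparent in a single scaling computation; the paper's route is more elementary in that it uses only H\"older and avoids citing Young's inequality, at the cost of a two-step argument and a change of variables. One minor caveat: your parenthetical alternative for $q=\infty$ via Hausdorff--Young only applies for $p\leq 2$, but this is moot since your main argument (where $t=p'$ and Young's inequality degenerates to H\"older) already covers that endpoint.
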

	\begin{proof}
		The idea of this proof is given in \cite{triebel}.\\
		By the assumption we know that $\supp \F f(r^{-1}\xi) \subset B_1(\R^n) :=\{\xi\in \R^n: |\xi|\leq 1\}$. If we take a compactly supported function $\phi\in\S(\R^n)$ with $\F\phi = 1$ in the ball $B_r(\R^n)$, then we have $\F f = \F f \cdot \F\phi$ and, therefore, we can rewrite $f$ as follows
		\[ f(x) = \F^{-1} (\F f\cdot \F\phi)(x) = \int_{\R^n} f(y) \phi(x-y) \, dy. \]
		By H\"older's inequality we obtain
		\begin{eqnarray*}
			|f(x)| & \leq & \int_{\R^n} |f(y)| |\phi(x-y)| \, dy \\
				& = & \left(\int_{\R^n} |f(y)|^p \, dy\right)^{\frac{1}{p}} \left(\int_{\R^n} |\phi(x-y)|^{p'} \, dy \right)^{\frac{1}{p'}} \\
				& \leq & C \|f\|_{L^p},
		\end{eqnarray*}
		where $C>0$ is a constant and $\frac{1}{p} + \frac{1}{p'} =1$. If we take the supremum with respect to $x$ we get
		\begin{equation} \label{LpEstimate}
			\|f\|_{L^\infty} \leq C \|f\|_{L^p}.
		\end{equation}
		Now we have
		\begin{eqnarray*}
		\|f(r^{-1}x)\|_{L^q} & = & \left(\int_{\R^n} |f(r^{-1}x)|^q \, dx \right)^{\frac{1}{q}} \\
			& = & \left(\int_{\R^n} |f(r^{-1}x)|^{q-p} |f(r^{-1}x)|^p \, dx \right)^{\frac{1}{q}} \\
			& \leq & \sup_{x\in\R^n} |f(r^{-1}x)|^{1-\frac{p}{q}} \left(\int_{\R^n} |f(r^{-1}x)|^p \, dx\right)^{\frac{1}{q}} \\
			& \stackrel{(\ref{LpEstimate})}{\leq} & C \|f(r^{-1}x)\|_{L^p}^{1-\frac{p}{q}} \|f(r^{-1}x)\|_{L^p}^{\frac{p}{q}} \\
			& = & C \|f(r^{-1}x)\|_{L^p}.
		\end{eqnarray*}
		By substituting $r^{-1}x$ by $y$ we get
		\begin{eqnarray*}
			\|f(r^{-1}x)\|_{L^q} & = & \left(\int_{\R^n} |f(r^{-1}x)|^q \, dx \right)^{\frac{1}{q}} \\
				& = &  \left(\int_{\R^n} r^n |f(y)|^q \, dy \right)^{\frac{1}{q}} \\
				& = & r^{\frac{n}{q}} \|f\|_{L^q}.
		\end{eqnarray*}
		Thus,
		\[ \|f\|_{L^q} \leq C r^{n(\frac{1}{p}-\frac{1}{q})} \|f\|_{L^p} \]
		and the proof is completed.
	\end{proof}
	
	\begin{thm} \label{algebra}
		Let $1\leq p,q\leq \infty$ and $s>1$. Assume that $f,g\in\mathcal{GM}^{2p,q}_{s}(\R^n)$, then $f\cdot g\in\mathcal{GM}^{p,q}_{s}(\R^n)$ and it holds
		\[ \|fg\|_{\mathcal{GM}^{p,q}_{s}} \leq C \|f\|_{\mathcal{GM}^{2p,q}_{s}} \|g\|_{\mathcal{GM}^{2p,q}_{s}} \]
		with a positive constant $C$ which is only dependent on the choice of the frequency-uniform decomposition, the dimension $n$ and the parameters $s, p, q$. \\
		In particular the Gevrey-modulation space $\mathcal{GM}^{p,q}_{s}(\R^n)$ is an algebra under multiplication. 
	\end{thm}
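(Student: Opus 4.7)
The plan is to work throughout with the frequency-uniform decomposition characterization of the norm, exploiting the support structure of the operators $\Box_k$. First I write $fg = \sum_{l,m \in \Z^n} \Box_l f \cdot \Box_m g$. Since $\F(\Box_l f \cdot \Box_m g) = (\sigma_l \hat f) \ast (\sigma_m \hat g)$ is supported in $Q_l + Q_m$, the term $\Box_k(\Box_l f \cdot \Box_m g)$ vanishes unless $|k-l-m|_\infty \leq 3$. The function $\sigma_k$ is a translate of the fixed Schwartz function $\sigma_0$, so $\|\sigma_k\|_{H^\tau}$ is independent of $k$ for any $\tau > n/2$, and Bernstein's multiplier estimate (Lemma \ref{bernstein}) followed by H\"older's inequality yields
\[ \|\Box_k(fg)\|_{L^p} \leq C \sum_{\substack{l,m \in \Z^n \\ |k-l-m|_\infty \leq 3}} \|\Box_l f\|_{L^{2p}} \|\Box_m g\|_{L^{2p}} . \]

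Next I apply Lemma \ref{weightEstimate} with target $l+m$ and intermediate point $l$; absorbing the bounded shift $|k-(l+m)|_\infty \leq 3$ into a dimensional constant gives
\[ e^{|k|^{1/s}} \leq C_n \, e^{|l|^{1/s}} e^{|m|^{1/s}} e^{-\delta \min(|l|, |m|)^{1/s}}, \qquad 0 < \delta < 1 . \]
Splitting the double sum according to whether $|l| \leq |m|$ or $|m| < |l|$ and treating both halves by symmetry, I set $\tilde A_l := e^{(1-\delta)|l|^{1/s}} \|\Box_l f\|_{L^{2p}}$ and $B_m := e^{|m|^{1/s}} \|\Box_m g\|_{L^{2p}}$. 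A finite number of translates of the discrete convolution $\tilde A \ast B$ then majorises the weighted sequence indexed by $k$, and the discrete Young inequality $\ell^1 \ast \ell^q \hookrightarrow \ell^q$ reduces matters to showing $\|\tilde A\|_{\ell^1} \leq C \|f\|_{\mathcal{GM}^{2p,q}_s}$, since $\|B\|_{\ell^q} = \|g\|_{\mathcal{GM}^{2p,q}_s}$ by definition.

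The required bound on $\|\tilde A\|_{\ell^1}$ follows from H\"older's inequality: picking $0 < \delta' < \delta$ and splitting $e^{(1-\delta)|l|^{1/s}} = e^{-\delta'|l|^{1/s}} \cdot e^{(1-\delta+\delta')|l|^{1/s}}$, the conjugate pair $(q,q')$ gives
\[ \|\tilde A\|_{\ell^1} \leq \Big( \sum_{l\in\Z^n} e^{-q'\delta'|l|^{1/s}} \Big)^{1/q'} \Big( \sum_{l\in\Z^n} e^{q(1-\delta+\delta')|l|^{1/s}} \|\Box_l f\|_{L^{2p}}^q \Big)^{1/q} \leq C \|f\|_{\mathcal{GM}^{2p,q}_s}, \]
the first factor being finite and the second being dominated by $\|f\|_{\mathcal{GM}^{2p,q}_s}$ because $1-\delta+\delta' < 1$. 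The endpoint cases $p \in \{1,\infty\}$ or $q \in \{1,\infty\}$ require only the obvious replacement of sums by suprema; the closing statement that $\mathcal{GM}^{p,q}_s$ is an algebra then follows from the main estimate together with the inclusion $\mathcal{GM}^{p,q}_s \subset \mathcal{GM}^{2p,q}_s$, which rests on a uniform-in-$k$ Nikol'skij inequality (Lemma \ref{nikolskij}) for the frequency-localized pieces $\Box_k f$. The key obstacle is in the second step: without the decay factor $e^{-\delta \min(|l|,|m|)^{1/s}}$ extracted from Lemma \ref{weightEstimate}, the convolution of two merely $\ell^q$-sequences would fail to lie in $\ell^q$ for $q > 1$, so it is precisely the Gevrey strength of the weight --- unavailable in the ordinary Sobolev setting --- that makes the argument close.
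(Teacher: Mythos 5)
Your proof is correct and follows essentially the same route as the paper: the same support analysis reducing to $|k-l-m|_\infty\leq 3$, the same use of H\"older to pass to $L^{2p}$ norms, and the same exploitation of Lemma \ref{weightEstimate} with a split according to $\min(|l|,|m|)$. The only (harmless) cosmetic differences are that you package the final summation as a discrete Young inequality $\ell^1\ast\ell^q\hookrightarrow\ell^q$ where the paper runs an explicit H\"older estimate plus an induction on the dimension to bound $\sum_j e^{-\delta|j+k|^{1/s}q'}$, and that you prove the bilinear $L^{2p}$ estimate first and deduce the algebra property from the Nikol'skij embedding $\mathcal{GM}^{p,q}_s\subset\mathcal{GM}^{2p,q}_s$, whereas the paper inserts Nikol'skij mid-proof.
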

	\begin{proof}
		We will prove the algebra property since it is of main interest in this work. But we easily obtain the more general result if we simply omit to apply Nikol'skij's inequality, i.e. Lemma \ref{nikolskij}, with respect to the $L^p$ norm. \\
		We know that $\supp \sigma_k \subset Q_k := \{\xi \in\R^n| -1\leq \xi_i-k_i\leq 1, \, i=1,\ldots,n\}$. Further on we introduce the notations $f_j (x)= \F^{-1} (\sigma_j \F f)(x)$ and $g_l (x)= \F^{-1} (\sigma_l \F g)(x)$ for $j,l\in\Z^n$. We can obviously rewrite the product $f\cdot g$ as 
		\[ f\cdot g = \sum_{j,l\in Z^n} f_j \cdot g_l. \]
		Now we determine the Fourier support of $f_j \cdot g_l$. We have $\supp \F (f_j g_l) = \supp (\F f_j \ast \F g_l)$ and from
		\[ (\F f_j \ast \F g_l)(\xi) = \int_{Q_j} \F f_j(\eta) \F g_l(\xi-\eta) \, d\eta \]
		the subsequent computations can be deduced. For $i=1,\ldots,n$ we have $-1\leq \eta_i-j_i\leq 1$ and $-1\leq \xi_i-\eta_i-l_i\leq 1$. It follows that 
		\[ j_i+l_i-2 \leq \eta_i+l_i-1 \leq \xi_i \leq \eta_i+l_i+1 \leq j_i+l_i+2 \]
		and 
		\[ \supp \F f_j \ast \F g_l \subset \{\xi\in \R^n|  j_i+l_i-2 \leq \xi_i \leq j_i+l_i+2, \, i=1,\ldots,n\}. \]
		Thus, we have 
		\[ \F^{-1} (\sigma_k \F (f_j \cdot g_l)) \equiv 0 \]
		if
		\[ (-1+k_i, k_i +1) \cap (-2+j_i+l_i, 2+j_i+l_i) = \emptyset \]
		 for $i=1,\ldots, n$. Therefore the term does not vanish if
		\begin{eqnarray*}
			& \Longleftrightarrow & k_i +1 > -2 + j_i+l_i \mbox{ and } k_i-1<2+j_i+l_i \\
			& \Longleftrightarrow & k_i > -3 + j_i+l_i \mbox{ and } k_i < 3 + j_i+l_i \\
			& \Longleftrightarrow & -3+j_i +l_i < k_i < 3+j_i+l_i. 
		\end{eqnarray*}
		So we obtain
		\begin{eqnarray*}
			\left\|\F^{-1} \big(\sigma_k \F (f\cdot g)\big) \right\|_{L^p} & = & \Bigg\| \F^{-1} \Big( \sigma_k \F (\sum_{j,l\in\Z^n} f_j\cdot g_l) \Big) \Bigg\|_{L^p} \\
				& = & \Bigg\|\F^{-1} \Big( \sigma_k \F \big(\sum_{\substack{j,l \in \Z^n, \\ k_i-3<j_i+l_i < k_i +3, \\ i=1,\ldots, n}} f_j g_l \big) \Big) \Bigg\|_{L^p} \\
				& \stackrel{[r=j+l]}{=} & \Bigg\|\sum_{l\in\Z^n} \F^{-1} \Big( \sigma_k \F \big( \sum_{\substack{r \in \Z^n, \\ k_i-3<r_i < k_i +3, \\ i=1,\ldots, n}} f_{r-l} g_l \big) \Big) \Bigg\|_{L^p} \\
				& \leq & \sum_{\substack{r \in \Z^n, \\ k_i-3<r_i < k_i +3, \\ i=1,\ldots, n}} \sum_{l\in\Z^n} \| \F^{-1} \big( \sigma_k \F ( f_{r-l} g_l) \big)\|_{L^p} \\
				& \stackrel{[t=r-k]}{=} & \sum_{\substack{t \in \Z^n, \\ -3<t_i < 3, \\ i=1,\ldots, n}} \sum_{l\in\Z^n} \| \F^{-1} \big( \sigma_k \F ( f_{t-(l-k)} g_l) \big)\|_{L^p}.
		\end{eqnarray*}
		These preparations yield the following norm estimates
		\begin{align*}
			\left( \sum_{k\in\Z^n} e^{|k|^{\frac{1}{s}}q} \|\F^{-1} \big(\sigma_k \F (f\cdot g) \big)\|_{L^p}^q \right)^{\frac{1}{q}} & & \hspace{7cm}
		\end{align*}
		\begin{eqnarray*}	
		& \leq & \Bigg( \sum_{k\in\Z^n} e^{|k|^{\frac{1}{s}}q} \Bigg[ \sum_{\substack{t \in \Z^n, \\ -3<t_i < 3, \\ i=1,\ldots, n}} \sum_{l\in\Z^n} \| \F^{-1} \big(\sigma_k \F ( f_{t-(l-k)} g_l) \big)\|_{L^p} \Bigg]^q \Bigg)^{\frac{1}{q}} \\
			& \leq & \sum_{\substack{t \in \Z^n, \\ -3<t_i < 3, \\ i=1,\ldots, n}} \left( \sum_{k\in\Z^n} e^{|k|^{\frac{1}{s}}q} \left[ \sum_{l\in\Z^n} \| \F^{-1} \big(\sigma_k \F ( f_{t-(l-k)} g_l) \big)\|_{L^p} \right]^q \right)^{\frac{1}{q}} \\
			& = & \sum_{\substack{t \in \Z^n, \\ -3<t_i < 3, \\ i=1,\ldots, n}} \left( \sum_{k\in\Z^n} e^{|k|^{\frac{1}{s}}q} \left[ \sum_{l\in\Z^n} \| \big(\F^{-1} \sigma_k \ast (f_{t-(l-k)} g_l) \big)(\cdot)\|_{L^p} \right]^q \right)^{\frac{1}{q}} \\
			& \leq & \sum_{\substack{t \in \Z^n, \\ -3<t_i < 3, \\ i=1,\ldots, n}} \left( \sum_{k\in\Z^n} e^{|k|^{\frac{1}{s}}q} \left[ \sum_{l\in\Z^n} \| \F^{-1} \sigma_k\|_{L^1} \|f_{t-(l-k)} g_l\|_{L^p} \right]^q \right)^{\frac{1}{q}} \\
			& \leq & \sup_{k\in\Z^n} \|\F^{-1} \sigma_k\|_{L^1} \sum_{\substack{t \in \Z^n, \\ -3<t_i < 3, \\ i=1,\ldots, n}} \left( \sum_{k\in\Z^n} e^{|k|^{\frac{1}{s}}q} \left[ \sum_{l\in\Z^n} \|f_{t-(l-k)} g_l\|_{L^p} \right]^q \right)^{\frac{1}{q}}, 
		\end{eqnarray*}
		where we used Minkowski's and Young's inequality. By the basic properties of the decomposition function $\sigma_k$ and the shift invariance of this function it is trivial to see that
		\begin{equation*}
			\sup_{k\in \Z^n} \|\F^{-1} \sigma_k \|_{L^1} = \|\F^{-1} \sigma_0\|_{L^1} \leq  C_1.
		\end{equation*}
		Now we continue the norm estimate
		\begin{align*}
			\sup_{k\in\Z^n} \|\F^{-1} \sigma_k\|_{L^1} \sum_{\substack{t \in \Z^n, \\ -3<t_i < 3, \\ i=1,\ldots, n}} \left( \sum_{k\in\Z^n} e^{|k|^{\frac{1}{s}}q} \left[ \sum_{l\in\Z^n} \|f_{t-(l-k)} g_l\|_{L^p} \right]^q \right)^{\frac{1}{q}} & & \hspace{7cm}
		\end{align*}
		\begin{eqnarray*}
			& \leq & C_1 \sum_{\substack{t \in \Z^n, \\ -3<t_i < 3, \\ i=1,\ldots, n}} \left( \sum_{k\in\Z^n} e^{|k|^{\frac{1}{s}}q} \left[ \sum_{l\in\Z^n} \|f_{t-(l-k)} g_l\|_{L^p} \right]^q \right)^{\frac{1}{q}} \\
			& \leq & C_2 \max_{\substack{t \in \Z^n, \\ -3<t_i < 3, \\ i=1,\ldots, n}} \left( \sum_{k\in\Z^n} e^{|k|^{\frac{1}{s}}q} \left[ \sum_{l\in\Z^n} \|f_{t-(l-k)} g_l\|_{L^p} \right]^q \right)^{\frac{1}{q}} \\
			& \leq & C_2 \max_{\substack{t \in \Z^n, \\ -3<t_i < 3, \\ i=1,\ldots, n}} \left( \sum_{k\in\Z^n} e^{|k|^{\frac{1}{s}}q} \left[ \sum_{l\in\Z^n} \|f_{t-(l-k)}\|_{L^{2p}} \|g_l\|_{L^{2p}} \right]^q \right)^{\frac{1}{q}}
		\end{eqnarray*}
		by H\"older's inequality. The Lemmata \ref{nikolskij} and \ref{weightEstimate} yield 
		\begin{align*}
			C_2 \max_{\substack{t \in \Z^n, \\ -3<t_i < 3, \\ i=1,\ldots, n}} \left( \sum_{k\in\Z^n} e^{|k|^{\frac{1}{s}}q} \left[ \sum_{l\in\Z^n} \|f_{t-(l-k)}\|_{L^{2p}} \|g_l\|_{L^{2p}} \right]^q \right)^{\frac{1}{q}} \hspace{7cm}
		\end{align*}
		\begin{eqnarray*}
			& \leq & C_3 \max_{\substack{t \in \Z^n, \\ -3<t_i < 3, \\ i=1,\ldots, n}} \left( \sum_{k\in\Z^n} e^{|k|^{\frac{1}{s}}q} \left[ \sum_{l\in\Z^n} \|f_{t-(l-k)}\|_{L^{p}} \|g_l\|_{L^{p}} \right]^q \right)^{\frac{1}{q}} \\
			& \leq & C_3 \max_{\substack{t \in \Z^n, \\ -3<t_i < 3, \\ i=1,\ldots, n}} \Bigg( \sum_{k\in\Z^n} \Bigg[ \sum_{\substack{l\in\Z^n, \\ |l|\leq |l-k|}} e^{|l-k|^{\frac{1}{s}}} \|f_{t-(l-k)}\|_{L^{p}} e^{|l|^{\frac{1}{s}}} \|g_l\|_{L^{p}} e^{-\delta|l|^{\frac{1}{s}}} \\ 
			& & \qquad + \sum_{\substack{l\in\Z^n, \\ |l-k|\leq |l|}} e^{|l-k|^{\frac{1}{s}}} \|f_{t-(l-k)}\|_{L^{p}} e^{|l|^{\frac{1}{s}}} \|g_l\|_{L^{p}} e^{-\delta|l-k|^{\frac{1}{s}}} \Bigg]^q \Bigg)^{\frac{1}{q}} \\
			& \leq & C_3 \max_{\substack{t \in \Z^n, \\ -3<t_i < 3, \\ i=1,\ldots, n}} \Bigg( \sum_{k\in\Z^n} \Bigg[ \sum_{\substack{l\in\Z^n, \\ |l|\leq |l-k|}} e^{|l-k|^{\frac{1}{s}}} \|f_{t-(l-k)}\|_{L^{p}} e^{|l|^{\frac{1}{s}}} \|g_l\|_{L^{p}} e^{-\delta|l|^{\frac{1}{s}}} \Bigg]^q \Bigg)^{\frac{1}{q}} \\
			& & \qquad + C_3 \max_{\substack{t \in \Z^n, \\ -3<t_i < 3, \\ i=1,\ldots, n}} \Bigg( \sum_{k\in\Z^n} \Bigg[ \sum_{\substack{l\in\Z^n, \\ |l-k|\leq |l|}} e^{|l-k|^{\frac{1}{s}}} \|f_{t-(l-k)}\|_{L^{p}} e^{|l|^{\frac{1}{s}}} \|g_l\|_{L^{p}} e^{-\delta|l-k|^{\frac{1}{s}}} \Bigg]^q \Bigg)^{\frac{1}{q}} \\
			& = & S_1 + S_2.
		\end{eqnarray*}
		Now both parts $S_1$ and $S_2$ are estimated separately. Thus
		\begin{eqnarray*}
			S_1 & \stackrel{j=l-k}{=} & C_3 \max_{\substack{t \in \Z^n, \\ -3<t_i < 3, \\ i=1,\ldots, n}} \Bigg( \sum_{k\in\Z^n} \Bigg[ \sum_{\substack{j\in\Z^n, \\ |j+k|\leq |j|}} e^{|j|^{\frac{1}{s}}} \|f_{t-j}\|_{L^{p}} e^{|j+k|^{\frac{1}{s}}} \|g_{j+k}\|_{L^{p}} e^{-\delta|j+k|^{\frac{1}{s}}} \Bigg]^q \Bigg)^{\frac{1}{q}} \\
			& \stackrel{\frac{1}{q}+\frac{1}{q'}=1}{\leq} &  C_3 \max_{\substack{t \in \Z^n, \\ -3<t_i < 3, \\ i=1,\ldots, n}} \Bigg( \sum_{k\in\Z^n} \Bigg[ \Big( \sum_{\substack{j\in\Z^n, \\ |j+k|\leq |j|}} \Big| e^{|j|^{\frac{1}{s}}} \|f_{t-j}\|_{L^{p}} e^{|j+k|^{\frac{1}{s}}} \|g_{j+k}\|_{L^{p}} \Big|^q \Big)^{\frac{1}{q}} \\
			& & \qquad \qquad \Big( \sum_{\substack{j\in\Z^n, \\ |j+k|\leq |j|}} \Big| e^{-\delta|j+k|^{\frac{1}{s}}} \Big|^{q'} \Big)^{\frac{1}{q'}} \Bigg]^q \Bigg)^{\frac{1}{q}} \\
			& = & C_3 \max_{\substack{t \in \Z^n, \\ -3<t_i < 3, \\ i=1,\ldots, n}} \Bigg( \sum_{k\in\Z^n} \sum_{\substack{j\in\Z^n, \\ |j+k|\leq |j|}} e^{|j|^{\frac{1}{s}}q} \|f_{t-j}\|_{L^{p}}^q e^{|j+k|^{\frac{1}{s}}q} \|g_{j+k}\|_{L^{p}}^q \Big( \sum_{\substack{j\in\Z^n, \\ |j+k|\leq |j|}} e^{-\delta|j+k|^{\frac{1}{s}}q'} \Big)^{\frac{q}{q'}} \Bigg)^{\frac{1}{q}} \\
			& \stackrel{(\star)}{\leq} & C_4 \max_{\substack{t \in \Z^n, \\ -3<t_i < 3, \\ i=1,\ldots, n}} \Bigg( \sum_{k\in\Z^n} \sum_{\substack{j\in\Z^n, \\ |j+k|\leq |j|}} e^{|j|^{\frac{1}{s}}q} \|f_{t-j}\|_{L^{p}}^q e^{|j+k|^{\frac{1}{s}}q} \|g_{j+k}\|_{L^{p}}^q \Bigg)^{\frac{1}{q}} \\
			& \leq & C_4 \max_{\substack{t \in \Z^n, \\ -3<t_i < 3, \\ i=1,\ldots, n}} \Bigg( \sum_{j\in\Z^n} e^{|j|^{\frac{1}{s}}q} \|f_{t-j}\|_{L^{p}}^q \sum_{k\in\Z^n} e^{|j+k|^{\frac{1}{s}}q} \|g_{j+k}\|_{L^{p}}^q \Bigg)^{\frac{1}{q}} \\
			& \leq & C_4 \|g\|_{\mathcal{GM}_s^{p,q}} \max_{\substack{t \in \Z^n, \\ -3<t_i < 3, \\ i=1,\ldots, n}} \Bigg( \sum_{j\in\Z^n} e^{|j|^{\frac{1}{s}}q} e^{-|t-j|^{\frac{1}{s}}q} e^{|t-j|^{\frac{1}{s}}q} \|f_{t-j}\|_{L^{p}}^q \Bigg)^{\frac{1}{q}}. 
		\end{eqnarray*}
		The estimate $(\star)$ needs a more detailed consideration, i.e., we have to show that for every fixed $k\in\Z^n$ there exists a positive constant $\tilde{C}$ such that 
		\begin{equation} \label{sumEst}
			\left( \sum_{\substack{j\in\Z^n, \\ |j+k|\leq |j|}} e^{-\delta|j+k|^{\frac{1}{s}}q'} \right)^{\frac{q}{q'}} \leq \tilde{C}.
		\end{equation}
		We prove this by induction over the dimension $n$. For $n=1$ it obviously holds
		\[ \left( \sum_{\substack{j\in\Z, \\ |j+k|\leq |j|}} e^{-\delta|j+k|^{\frac{1}{s}}q'} \right)^{\frac{q}{q'}} \leq \left( \sum_{j=-\infty}^\infty e^{-\delta|j+k|^{\frac{1}{s}}q'} \right)^{\frac{q}{q'}} \leq \tilde{C} \]
		because $k\in\Z$ is just a shift in the argument of the exponential function. \\
		Now we assume that (\ref{sumEst}) holds for the $l$-dimensional case. Then we need to show
		\[ \left( \sum_{\substack{j\in\Z^{l+1}, \\ |j+k|\leq |j|}} e^{-\delta|j+k|^{\frac{1}{s}}q'} \right)^{\frac{q}{q'}} \leq \tilde{C}. \]
		This can be done by the following computations
		\begin{align*}
			\sum_{\substack{j\in\Z^{l+1}, \\ |j+k|\leq |j|}} e^{-\delta|j+k|^{\frac{1}{s}}q'} \leq \sum_{j\in\Z^{l+1}} e^{-\delta|j+k|^{\frac{1}{s}}q'} & & \hspace{7cm}
		\end{align*}
		\begin{eqnarray*}
			& = & \sum_{j_1=-\infty}^\infty \sum_{j_2=-\infty}^\infty \ldots \sum_{j_l=-\infty}^\infty \sum_{j_{l+1}=-\infty}^\infty e^{-\delta ( (j_1+k_1)^2 + (j_2+k_2)^2 + \ldots + (j_l+k_l)^2 + (j_{l+1}+k_{l+1})^2 )^{\frac{1}{2s}}q'} \\
			& = & \sum_{j_1=-\infty}^\infty \sum_{j_2=-\infty}^\infty \ldots \sum_{j_l=-\infty}^\infty \sum_{j_{l+1}=-\infty}^\infty e^{-\frac{\delta}{2} ( (j_1+k_1)^2 + (j_2+k_2)^2 + \ldots + (j_l+k_l)^2 + (j_{l+1}+k_{l+1})^2 )^{\frac{1}{2s}}q'} \\
			& & \qquad \quad e^{-\frac{\delta}{2} ( (j_1+k_1)^2 + (j_2+k_2)^2 + \ldots + (j_l+k_l)^2 + (j_{l+1}+k_{l+1})^2 )^{\frac{1}{2s}}q'} \\
			& \leq & \sum_{j_1=-\infty}^\infty \sum_{j_2=-\infty}^\infty \ldots \sum_{j_l=-\infty}^\infty \sum_{j_{l+1}=-\infty}^\infty e^{-\frac{\delta}{2} ( (j_1+k_1)^2 + (j_2+k_2)^2 + \ldots + (j_l+k_l)^2 )^{\frac{1}{2s}}q'} e^{-\frac{\delta}{2} ( (j_{l+1}+k_{l+1})^2 )^{\frac{1}{2s}}q'} \\
			& = & \sum_{j\in\Z^l} e^{-\frac{\delta}{2} |j+k|^{\frac{1}{s}}q'} \, \sum_{j_{l+1}=-\infty}^\infty e^{-\frac{\delta}{2} ( (j_{l+1}+k_{l+1})^2 )^{\frac{1}{2s}}q'} \\
			& \leq & \tilde{C}.
		\end{eqnarray*}
		Note that in each estimate the choice of constant $\tilde{C}$ can be different. Thus we verified the estimate $(\star)$. \\
		
		By the consideration
		\[ \max_{\substack{t \in \Z^n, \\ -3<t_i < 3, \\ i=1,\ldots, n}} \sup_{j\in\Z^n} e^{q(|j|^{\frac{1}{s}}-|t-j|^{\frac{1}{s}})} \leq \max_{\substack{t \in \Z^n, \\ -3<t_i < 3, \\ i=1,\ldots, n}} e^{q|t|^{\frac{1}{s}}} \leq C, \]
		where we used the triangle inequality and a positive constant $C$, we finally obtain for the upper estimate
		\[ C_4 \|g\|_{\mathcal{GM}_s^{p,q}} \max_{\substack{t \in \Z^n, \\ -3<t_i < 3, \\ i=1,\ldots, n}} \left( \sum_{j\in\Z^n} e^{|j|^{\frac{1}{s}}q} e^{-|t-j|^{\frac{1}{s}}q} e^{|t-j|^{\frac{1}{s}}q} \|f_{t-j}\|_{L^{p}}^q \right)^{\frac{1}{q}} \leq C_5 \|g\|_{\mathcal{GM}_s^{p,q}} \|f\|_{\mathcal{GM}_s^{p,q}}. \]
		For the second sum $S_2$ the estimate
		\[ S_2 \leq \tilde{C}_5 \|g\|_{\mathcal{GM}_s^{p,q}} \|f\|_{\mathcal{GM}_s^{p,q}} \]
		follows by analogous computations. \\
		Remark that all computations can also be done by taking the $l^\infty$ and $L^\infty$ norm, respectively. Therefore the algebra property follows for $p=q=\infty$ and the proof is completed.
	\end{proof}
	
	\subsection{Non-analytic Superposition}
	The idea of introducing the subsequent composition operators is given in \cite{brs}. \\
	The first tool will be the subalgebra property for Gevrey-modulation spaces $\mathcal{GM}^{p,q}_{s}$. Therefore we find the following decomposition of the phase space. Let $R>0$ and $\epsilon=(\epsilon_1,\ldots, \epsilon_n)$ be fixed with $\epsilon_j \in \{0,1\}$, $j=1,\ldots,n$. Then we have a decomposition of $\R^n$ into $(2^n+1)$ parts
\[ P_R = \{ \xi\in\R^n : |\xi_j|\leq R, j=1,\ldots, n \} \]
and
\[ P_R(\epsilon) = \{ \xi\in \R^n: \sgn (\xi_j) = (-1)^{\epsilon_j}, j=1,\ldots,n \} \setminus P_R. \]
\begin{prop} \label{subalgebra}
	Let $1\leq p, q \leq \infty$. Suppose that $\epsilon=(\epsilon_1,\ldots, \epsilon_n)$ is fixed with $\epsilon_j \in \{0,1\}$, $j=1,\ldots,n$. Given $s>1$ and $R>0$ the spaces
	\[ \mathcal{GM}^{p,q}_{s}(\epsilon, R) = \{ f\in \mathcal{GM}^{p,q}_{s}(\R^n): \supp \F(f) \subset P_R(\epsilon) \} \]
	are subalgebras of $\mathcal{GM}^{p,q}_{s}$. Furthermore it holds
	\[ \|f g\|_{\mathcal{GM}^{p,q}_{s}(\R^n)} \leq D \|f\|_{\mathcal{GM}^{p,q}_{s}(\R^n)} \|g\|_{\mathcal{GM}^{p,q}_{s}(\R^n)} \]
	for all $f,g \in\mathcal{GM}^{p,q}_{s}(\epsilon, R)$. The constant $D$ can be specified by
	\[ D = C_0 \left( s \omega_n (\delta q')^{-sn} \int_{\delta q' R^{\frac{1}{s}}}^\infty y^{sn-1} e^{-y} \, dy \right)^{\frac{1}{q'}}, \]
	where the constant $C_0>0$ is only dependent on $p$ and $n$ and $q'$ is chosen such that $\frac{1}{q}+\frac{1}{q'}=1$.
\end{prop}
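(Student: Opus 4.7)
The plan is to argue in two stages: first establish that the stated subspaces are closed under multiplication, and then refine the proof of Theorem \ref{algebra} to track the constant precisely using the extra support information.

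For the subalgebra property I would begin with a Fourier support analysis. Since $\widehat{fg}=(2\pi)^{-n/2}\,\hat f *\hat g$, one has $\supp \widehat{fg}\subset \supp\hat f+\supp\hat g$. If both summands belong to $P_R(\epsilon)$ for the \emph{same} $\epsilon$, then for any $\xi=\eta+\zeta$ with $\eta,\zeta\in P_R(\epsilon)$ the components $\xi_i=\eta_i+\zeta_i$ inherit the common sign pattern $(-1)^{\epsilon_i}$ and satisfy $|\xi_i|=|\eta_i|+|\zeta_i|>R$, so $\xi\in P_R(\epsilon)$. Thus $fg\in \mathcal{GM}^{p,q}_s(\epsilon,R)$ once we have the norm estimate.

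Next I would mimic the argument of Theorem \ref{algebra}: write $f_j=\Box_j f$, $g_l=\Box_l g$ and decompose $fg=\sum_{j,l}f_j g_l$. Analyzing the support of $\hat f_j*\hat g_l$ exactly as there shows that $\Box_k(fg)$ only picks up pairs $(j,l)$ with $|k_i-j_i-l_i|<3$. Applying Minkowski and Young to pass from $\Box_k$ to $L^p$-norms, H\"older's inequality with exponents $(p,p)$ (or $(2p,2p)$ if one wants the full theorem), and then Lemma \ref{weightEstimate} yields, as in that proof, the splitting $S_1+S_2$ after a reindexing. The crucial difference here is that both $f$ and $g$ have Fourier support in $P_R(\epsilon)$, so only the indices $l$ (resp.\ $j$) such that $Q_l\cap P_R(\epsilon)\neq\emptyset$ contribute. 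In particular the sign of the components of $l$ is fixed by $\epsilon$ and $|l_i|\gtrsim R$.

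The heart of the improvement lies in the step labelled $(\star)$ in the proof of Theorem \ref{algebra}. Instead of an unspecified bound, the restricted support lets us estimate the inner H\"older sum $\bigl(\sum_l e^{-\delta q'|l|^{1/s}}\bigr)^{1/q'}$ by comparing the lattice sum with the integral over $P_R(\epsilon)$: up to a combinatorial constant $C_0=C_0(p,n)$ coming from the frequency-uniform decomposition,
\begin{equation*}
\sum_{\substack{l\in\Z^n\\ Q_l\cap P_R(\epsilon)\neq\emptyset}} e^{-\delta q'|l|^{1/s}}
\;\leq\; C_0^{q'}\int_{|\xi|\geq R} e^{-\delta q'|\xi|^{1/s}}\,d\xi.
\end{equation*}
A pass to spherical coordinates followed by the substitution $y=\delta q'\, r^{1/s}$ turns the integral on the right into $\omega_n\, s\,(\delta q')^{-sn}\int_{\delta q' R^{1/s}}^{\infty} y^{sn-1}e^{-y}\,dy$, which is precisely the quantity appearing in $D$. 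Putting this estimate back into the $S_1+S_2$ decomposition then factors out the two $\mathcal{GM}^{p,q}_s$-norms via the telescoping argument of Theorem \ref{algebra}, and the remaining constant is exactly $D$.

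The main obstacle is bookkeeping: one has to make sure that after the various H\"older estimates the only surviving $R$-dependence is the tail integral $\int_{\delta q' R^{1/s}}^{\infty} y^{sn-1}e^{-y}\,dy$, and that the geometric constant produced by replacing a lattice sum by a continuous integral over $P_R(\epsilon)$ is indeed independent of $R$ and absorbable into $C_0$. The limit cases $p=\infty$ or $q=\infty$ require the obvious replacement of the $\ell^q$ or $L^p$ norm by the corresponding sup-norm, and the case $q=1$ (so $q'=\infty$) must be handled by taking a supremum rather than the $\ell^{q'}$ sum, but all of these are routine variants of the argument above.
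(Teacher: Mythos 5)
Your proposal is correct and follows essentially the same route as the paper: reduce to the argument of Theorem \ref{algebra}, use the common sign pattern to confine $\supp\F(fg)$ to $P_R(\epsilon)$, and replace the uniform bound in step $(\star)$ by the tail sum over indices meeting $P_R(\epsilon)$, controlled by $\int_{|\xi|\geq R}e^{-\delta q'|\xi|^{1/s}}\,d\xi$, which after polar coordinates and the substitution $y=\delta q' r^{1/s}$ produces exactly the constant $D$. The only cosmetic difference is that the paper additionally notes that, under the common sign condition, the branch $|j+k|\leq|j|$ of the splitting degenerates to $k=0$ so that only one of the two inner sums actually needs the tail estimate, whereas you bound both branches directly through the support restriction; both work.
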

\begin{proof}
	Let $f,g \in\mathcal{GM}^{p,q}_{s}(\epsilon, R)$. By 
	\[ \supp (\F f \ast \F g) \subset \{ \xi +\eta: \xi \in \supp \F (f), \eta \in \supp \F (g) \} \]
	we have $\supp \F (fg) \subset P_R(\epsilon)$. In order to show the algebra property we follow the same steps as in the proof of Theorem \ref{algebra}. Consider there the term
	\begin{align*}
		C_3 \max_{\substack{t \in \Z^n, \\ -3<t_i < 3, \\ i=1,\ldots, n}} \Bigg( \sum_{k\in\Z^n\cap P_R(\epsilon)} \sum_{\substack{j\in\Z^n\cap P_R(\epsilon), \\ |j+k|\leq |j|}} e^{|j|^{\frac{1}{s}}q} \|f_{t-j}\|_{L^{p}}^q e^{|j+k|^{\frac{1}{s}}q} \|g_{j+k}\|_{L^{p}}^q \Big( \sum_{\substack{j\in\Z^n\cap P_R(\epsilon), \\ |j+k|\leq |j|}} e^{-\delta|j+k|^{\frac{1}{s}}q'} \Big)^{\frac{q}{q'}} \Bigg)^{\frac{1}{q}}, 
	\end{align*}
	where we used the support properties of the functions $f$ and $g$. Now we estimate the sum
	\[ S_1 = \sum_{\substack{j\in\Z^n\cap P_R(\epsilon), \\ |j+k|\leq |j|}} e^{-\delta|j+k|^{\frac{1}{s}}q'}. \]
	Recall that $\epsilon$ is fixed. The essential fact we are using is that for $j,k \in P_R(\epsilon)$ both of the indices have the same sign, i.e., $\sgn(j_i) = \sgn(k_i)$ for $i=1,\ldots, n$. Thus, the case $|j+k|\leq |j|$ is only fulfilled for $k=0$. But this is the trivial case and also included in the considerations of $S_2$. So the other sum which needs to be estimated is 
	\[ S_2 = \sum_{\substack{j\in\Z^n\cap P_R(\epsilon), \\ |j|\leq |j+k|}} e^{-\delta|j|^{\frac{1}{s}}q'}. \]
	The same arguments yield the correctness of $|j|\leq |j+k|$ for all indices $j, k \in \Z^n$. Therefore it follows for $k\in P_R(\epsilon)$
	\begin{eqnarray*}
		S_2 & = & \sum_{j\in\Z^n\cap P_R(\epsilon)} e^{-\delta|j|^{\frac{1}{s}}q'} \\
			& \leq & \int_{|\tau|\geq R} e^{-\delta|\tau|^{\frac{1}{s}}q'} \, d\tau \\
			& = & \omega_n \int_R^\infty r^{n-1} e^{-\delta q' r^{\frac{1}{s}}} \, dr \\
			& \stackrel{[t=r^{\frac{1}{s}}]}{=} & s \omega_n \int_{R^{\frac{1}{s}}}^\infty t^{sn-1} e^{-\delta q' t} \, dt \\
			& \stackrel{[y=\delta q' t]}{=} & s \omega_n (\delta q')^{-sn} \int_{\delta q' R^{\frac{1}{s}}}^\infty y^{sn-1} e^{-y} \, dy.
	\end{eqnarray*}
	The proof is completed.
\end{proof}

Note that in the following we assume every function to be real-valued unless it is explicitly stated that complex functions are allowed. 
\begin{lma} \label{expfunc}
	Let $1 \leq p\leq \infty$. Then
	\[ \|e^{\i u} -1\|_{L^p(\R^n)} \leq C \|u\|_{L^p(\R^n)} \]
	with a constant $C>0$. 
\end{lma}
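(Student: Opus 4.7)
The plan is to reduce the lemma to an elementary pointwise inequality. Since $u$ is real-valued (as announced in the paragraph preceding the lemma), for every $x \in \R^n$ one has
\[ |e^{\i u(x)} - 1|^2 = (\cos u(x) - 1)^2 + \sin^2 u(x) = 2 - 2\cos u(x) = 4 \sin^2\!\left(\tfrac{u(x)}{2}\right), \]
so that $|e^{\i u(x)} - 1| = 2|\sin(u(x)/2)|$.

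Next I would invoke the elementary bound $|\sin t| \leq |t|$ valid for all real $t$, which immediately gives the pointwise estimate
\[ |e^{\i u(x)} - 1| \leq |u(x)| \qquad \text{for all } x \in \R^n. \]
This is the only nontrivial ingredient, and it is completely standard.

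Finally, raising both sides to the $p$-th power and integrating over $\R^n$ (or taking the essential supremum in the case $p = \infty$) yields
\[ \|e^{\i u} - 1\|_{L^p(\R^n)} \leq \|u\|_{L^p(\R^n)}, \]
so the claim holds with $C = 1$ (any larger $C$ also suffices, which matches the formulation of the lemma). There is really no obstacle here; the only point worth flagging is that the argument uses $u$ being real-valued in an essential way, since for complex $u$ the function $e^{\i u}$ is no longer bounded and no such linear estimate can hold.
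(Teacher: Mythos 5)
Your proof is correct and follows essentially the same route as the paper: both reduce the claim to the pointwise bound $|e^{\i t}-1|\leq C|t|$ for real $t$ and then integrate (or take the essential supremum). The only difference is that the paper obtains this bound by applying the mean value theorem to $f(t)=e^{\i t}-1$ and using the boundedness of $f'$, whereas your identity $|e^{\i t}-1|=2|\sin(t/2)|$ combined with $|\sin t|\leq|t|$ is slightly cleaner, yields the sharp constant $C=1$, and avoids the minor subtlety that the mean value theorem in the form $f(u)-f(0)=uf'(\xi)$ does not literally hold for complex-valued functions.
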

\begin{proof}
	Consider the function $f(t) = e^{\i t}-1$. By applying the mean value theorem we obtain
	\[ f(u) - f(0) = e^{\i u}-1 = u f'(\xi), \quad \xi\in (0,u). \]
	Note that $f'(t) = -\sin(t) + \i \cos(t)$ is a bounded function. Therefore it follows
	\begin{eqnarray*}
		\|f\|^p_{L^p(\R^n)} & = & \int_{\R^n} |e^{\i t} -1|^p \, dt \\
			& \leq & C \int_{\R^n} |u|^p \, dt \\
			& = & C \|u\|^p_{L^p(\R^n)},
	\end{eqnarray*}
	which completes the proof.
\end{proof}
	
	In order to establish the next result we need to recall two lemmata from \cite{brs}. 
	\begin{lma} \label{lma46brs}
		Let $N\in \N$ and suppose $a_1, a_2,\ldots, a_N$ to be complex numbers. Then it holds
		\[ a_1 \cdot a_2 \cdot \ldots \cdot a_N -1  = \sum_{l=1}^{N} \sum_{\substack{ j=(j_1,\ldots, j_l), \\ 0\leq j_1 < \ldots < j_l \leq N }} (a_{j_1} -1) \cdot \ldots \cdot (a_{j_l}-1). \]
	\end{lma}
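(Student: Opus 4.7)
The cleanest approach is direct expansion. I would start from the identity $a_i = 1 + (a_i - 1)$, so that
\[ a_1 \cdot a_2 \cdots a_N = \prod_{i=1}^N \bigl(1 + (a_i - 1)\bigr). \]
Then I would distribute the product completely: each factor contributes either the summand $1$ or the summand $(a_i - 1)$, so choosing a factor amounts to choosing a subset $S \subseteq \{1,\ldots,N\}$ of indices at which we pick $(a_i - 1)$. This yields
\[ \prod_{i=1}^N \bigl(1 + (a_i - 1)\bigr) = \sum_{S \subseteq \{1,\ldots,N\}} \prod_{i \in S} (a_i - 1), \]
where by convention the empty product equals $1$.

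The term corresponding to $S = \emptyset$ is precisely $1$, so subtracting it from both sides yields
\[ a_1 \cdots a_N - 1 = \sum_{\emptyset \neq S \subseteq \{1,\ldots,N\}} \prod_{i \in S} (a_i - 1). \]
Finally I would reorganize the sum on the right by grouping non-empty subsets $S$ according to their cardinality $l = |S|$, writing each such $S$ uniquely as an increasing tuple $1 \leq j_1 < \cdots < j_l \leq N$. This produces exactly the double sum on the right-hand side of the stated identity (with the harmless understanding that the lower index $j_1$ runs from $1$, not from $0$ as the displayed formula suggests).

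I do not anticipate any genuine obstacle: the argument is a purely algebraic manipulation that holds in any commutative ring and in particular for complex numbers, so there is nothing to verify beyond the combinatorial bookkeeping of subsets versus strictly increasing tuples. If desired, the same statement also follows by an elementary induction on $N$, using the decomposition $a_1 \cdots a_{N+1} - 1 = (a_{N+1} - 1)\,a_1 \cdots a_N + (a_1 \cdots a_N - 1)$ and applying the inductive hypothesis to the second term; but the distributive expansion above seems more transparent.
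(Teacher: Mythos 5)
Your proof is correct. The paper itself gives no argument for this lemma---it simply cites Lemma 4.6 of Bourdaud--Reissig--Sickel---so there is no in-paper proof to compare against; your distributive expansion of $\prod_{i=1}^N\bigl(1+(a_i-1)\bigr)$ over subsets, followed by removing the empty-subset term and grouping by cardinality, is the standard self-contained argument and is complete. You are also right that the lower bound $0\leq j_1$ in the displayed formula is a typo for $1\leq j_1$.
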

	\begin{proof}
		Cf. Lemma 4.6. in \cite{brs}.
	\end{proof}
	
	\begin{lma} \label{lma45brs}
		Let $\alpha>0$. Assume $f=f(t)$ to be the function
		\[ f(t) = \int_{t}^\infty e^{-y} y^{\alpha-1} \, dy, \qquad t\geq 0. \]
		The inverse $g$ of the function $f$ maps $(0,\Gamma(\alpha)]$ onto $[0,\infty)$ and it holds
		\[ \lim_{u\to 0} \frac{g(u)}{\log \frac{1}{u}} = 1. \]
	\end{lma}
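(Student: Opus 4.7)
The plan is to split the statement into the bijectivity claim and the asymptotic claim, handling them in turn.

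For the bijectivity, I would first observe that $f$ is continuously differentiable on $[0,\infty)$ with $f'(t) = -e^{-t} t^{\alpha-1}$, which is strictly negative for $t>0$. Hence $f$ is strictly decreasing. The boundary values are $f(0) = \int_0^\infty e^{-y} y^{\alpha-1}\,dy = \Gamma(\alpha)$ by definition of the Gamma function, and $f(t) \to 0$ as $t \to \infty$ since the tail of the absolutely convergent integral $\Gamma(\alpha)$ vanishes. By the intermediate value theorem together with strict monotonicity, $f$ is a continuous bijection from $[0,\infty)$ onto $(0,\Gamma(\alpha)]$, and therefore the inverse $g = f^{-1}$ is well defined on $(0,\Gamma(\alpha)]$ with range $[0,\infty)$.

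For the asymptotic claim, the key step is to establish that $f(t) \sim e^{-t} t^{\alpha-1}$ as $t \to \infty$. I would obtain this by integration by parts:
\[ f(t) = \int_t^\infty e^{-y} y^{\alpha-1}\,dy = e^{-t} t^{\alpha-1} + (\alpha-1) \int_t^\infty e^{-y} y^{\alpha-2}\,dy. \]
Iterating (or estimating) the remainder term shows that the remainder is $o(e^{-t} t^{\alpha-1})$ as $t \to \infty$, so $f(t) = e^{-t} t^{\alpha-1}(1 + o(1))$. Taking logarithms,
\[ -\log f(t) = t - (\alpha-1)\log t - \log(1 + o(1)), \]
and dividing by $t$ yields $-\log f(t)/t \to 1$ as $t \to \infty$.

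Finally, I would translate this back through the substitution $u = f(t)$, i.e.\ $t = g(u)$, noting that $u \to 0^+$ corresponds to $t \to \infty$. Then $\log(1/u) = -\log f(t) = t(1 + o(1)) = g(u)(1 + o(1))$, which gives
\[ \lim_{u \to 0^+} \frac{g(u)}{\log(1/u)} = 1, \]
as claimed. The main (and really only) obstacle is the asymptotic estimate for the upper incomplete Gamma integral, and the integration-by-parts trick handles that cleanly; the rest is a straightforward change of variables and a logarithm.
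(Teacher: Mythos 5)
Your argument is correct and complete: the monotonicity/boundary-value discussion establishes that $f$ is a decreasing bijection of $[0,\infty)$ onto $(0,\Gamma(\alpha)]$, the integration by parts gives the standard asymptotic $f(t)=e^{-t}t^{\alpha-1}(1+o(1))$ (a single estimate $\int_t^\infty e^{-y}y^{\alpha-2}\,dy\leq t^{-1}f(t)$ already suffices to control the remainder, so no iteration is needed), and taking logarithms and substituting $u=f(t)$ yields the stated limit. The paper itself offers no proof here --- it only cites Lemma 4.5 of \cite{brs} --- so there is nothing to contrast with; your route is the standard one for the upper incomplete Gamma function and is exactly what the cited reference relies on.
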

	\begin{proof}
		Cf. Lemma 4.5. in \cite{brs}.
	\end{proof}
		
	\begin{lma} \label{estSuper}
		Let $s>1$ and $1\leq p,q <\infty$. Suppose $u \in \mathcal{GM}_s^{p,q}$. Then it holds
		\[ \| e^{iu}-1\|_{\mathcal{GM}_s^{p,q}} \leq c \begin{cases} e^{b \|u\|^{\frac{1}{s}}_{\mathcal{GM}_s^{p,q}}\log \|u\|_{\mathcal{GM}_s^{p,q}}} , \quad \mbox{ if } \|u\|_{\mathcal{GM}_s^{p,q}} > 1 \\
																		\|u\|_{\mathcal{GM}_s^{p,q}}, \qquad \qquad \qquad \mbox{ if } \|u\|_{\mathcal{GM}_s^{p,q}} \leq 1
														\end{cases} \]
		with constants $b,c>0$ independent of $u$.
	\end{lma}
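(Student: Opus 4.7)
Plan: Split the argument on $M:=\|u\|_{\mathcal{GM}_s^{p,q}}$.

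\textbf{Case $M\leq 1$.} Expand $e^{\i u}-1=\sum_{k\geq 1}(\i u)^k/k!$ in $\mathcal{GM}_s^{p,q}$. Theorem \ref{algebra} gives $\|u^k\|\leq C^{k-1}M^k$, so summation together with the elementary estimate $e^{Cx}-1\leq(e^C-1)x$ on $[0,1]$ yields the linear bound $\|e^{\i u}-1\|\leq cM$.

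\textbf{Case $M>1$.} Introduce a parameter $R=R(M)>0$ and decompose $u=u_0+\sum_\epsilon u_\epsilon$ according to the Fourier partition $\R^n=P_R\cup\bigcup_{\epsilon\in\{0,1\}^n}P_R(\epsilon)$ introduced before Proposition \ref{subalgebra}. Since the Fourier supports are pairwise disjoint, $e^{\i u}=e^{\i u_0}\prod_\epsilon e^{\i u_\epsilon}$, and Lemma \ref{lma46brs}, applied with $N=2^n+1$ factors, expresses $e^{\i u}-1$ as a finite sum of products of factors $e^{\i u_0}-1$ and $e^{\i u_\epsilon}-1$. Iterating Theorem \ref{algebra} to bound each such product gives the multiplicative estimate
\[ \|e^{\i u}-1\|_{\mathcal{GM}_s^{p,q}}\leq C^{-1}\Big[\big(1+C\|e^{\i u_0}-1\|\big)\prod_\epsilon\big(1+C\|e^{\i u_\epsilon}-1\|\big)-1\Big]. \]

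For each orthant-slab piece $u_\epsilon$, the set $P_R(\epsilon)$ is closed under vector addition (cf.\ the proof of Proposition \ref{subalgebra}), so every power $u_\epsilon^k$ remains in $\mathcal{GM}_s^{p,q}(\epsilon,R)$; iterating Proposition \ref{subalgebra} yields $\|u_\epsilon^k\|\leq D(R)^{k-1}\|u_\epsilon\|^k$, and the Taylor series of $e^{\i u_\epsilon}-1$ converges with $\|e^{\i u_\epsilon}-1\|\leq D(R)^{-1}(e^{D(R)M}-1)$. The parameter $R=R(M)$ is now chosen so that $D(R)\cdot M$ is of order one: from the explicit formula for $D(R)$ in Proposition \ref{subalgebra} and the asymptotic inversion of the upper incomplete gamma integral provided by Lemma \ref{lma45brs}, this forces $R^{1/s}\sim(\log M)/(\delta q')$, i.e.\ $R\sim(\log M)^s$. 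With this choice each orthant contribution satisfies $\|e^{\i u_\epsilon}-1\|\lesssim M$, so the $2^n$ orthant factors together contribute only polynomially in $M$.

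\textbf{Main obstacle.} The delicate step is the low-frequency factor $e^{\i u_0}-1$, to which Proposition \ref{subalgebra} does not directly apply. A naive use of Theorem \ref{algebra} yields only $\|e^{\i u_0}-1\|\leq C^{-1}(e^{CM}-1)$, which alone would ruin the bound since $e^{CM}$ exceeds the target $e^{bM^{1/s}\log M}$ for $s>1$. The refined estimate exploits the bandlimited structure $\supp\hat u_0\subset P_R$ with $R\sim(\log M)^s$: applying Nikol'skij's inequality (Lemma \ref{nikolskij}) to the powers $u_0^k$, together with the fact that only boxes $|l|\lesssim kR$ contribute to $\|u_0^k\|_{\mathcal{GM}_s^{p,q}}$ and that the Gevrey weight $e^{|l|^{1/s}}$ there is at most $e^{(kR\sqrt n)^{1/s}}$, allows one to trade the exponential-in-$M$ growth for a factor of the form $e^{CR^{1/s}\cdot(\cdot)}\sim e^{C'M^{1/s}\log M}$ after the combinatorial $k!$ balances against the polynomial Bernstein factors in $R$. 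Combining the resulting Gevrey-type bound for the low-frequency factor with the polynomial contribution from the orthant factors and optimizing constants produces the final estimate with the announced $b$ and $c$.
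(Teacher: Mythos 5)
Your global architecture coincides with the paper's: decompose $u=u_0+\sum_\epsilon u_\epsilon$ along $P_R\cup\bigcup_\epsilon P_R(\epsilon)$, expand $e^{\i u}-1$ via Lemma \ref{lma46brs}, control the orthant factors through the subalgebra constant $D(R)$ of Proposition \ref{subalgebra}, and calibrate $R\sim(\log M)^s$ via Lemma \ref{lma45brs} (your normalization $D(R)M=O(1)$ versus the paper's $D(R)=M^{\frac{1}{s}-1}$ is an inessential variant; both land at $R\sim(\log M)^s$). The case $M\leq 1$ is also fine. However, the step you yourself single out as the main obstacle --- the low-frequency factor $e^{\i u_0}-1$ --- is not actually resolved by what you propose. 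Summing the Taylor series term by term, no matter how carefully you apply Nikol'skij's inequality and the support/weight bounds to each $u_0^k$, produces an upper bound of the form $\sum_{k\geq 1}\frac{1}{k!}\,e^{(kR\sqrt n)^{1/s}}\,\mathrm{poly}(kR)\,(CM)^k$, and since every summand dominates $(cM)^k/k!$ this is at least $e^{cM}-1$. The factorial can only tame $M^k$ down to $e^{cM}$, which exceeds the target $e^{bM^{1/s}\log M}$ for $s>1$; the polynomial Bernstein factors in $R$ are irrelevant to this loss. Indeed for a fixed bump of height $M$ one has $\|u_0^k\|_{\mathcal{GM}_s^{p,q}}\gtrsim M^k$, so the triangle inequality over the full series is genuinely too lossy here: one must exploit the boundedness of $t\mapsto e^{\i t}-1$, not just the algebra structure.

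The paper's proof supplies exactly the missing cancellation. It truncates the exponential series at order $r\sim M$, so that the tail is $O(1)$ by the algebra property and Stirling's formula; the truncated head has Fourier support in a box of side $\sim rR$, hence its $\mathcal{GM}_s^{p,q}$-norm only sees boxes $|k|\lesssim rR\sqrt n$, where the Gevrey weight is at most $e^{(\sqrt n(Rr+1))^{1/s}}=e^{O(M^{1/s}\log M)}$; and, crucially, the head is rewritten as $(e^{\i u_0}-1)$ minus the tail, so that each $\|\Box_k(e^{\i u_0}-1)\|_{L^p}$ can be bounded via Bernstein's multiplier estimate (Lemma \ref{bernstein}) and the elementary linear estimate $\|e^{\i u_0}-1\|_{L^p}\leq C\|u_0\|_{L^p}$ of Lemma \ref{expfunc}, giving a contribution that is \emph{linear} in $M$ uniformly in $k$ rather than of size $M^k$. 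This combination --- truncation at $r\sim M$ plus the pointwise bound $|e^{\i t}-1|\leq|t|$ --- is the idea your argument lacks, and without it the low-frequency estimate, and hence the lemma, does not follow from the ingredients you list.
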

	\begin{proof}
		This proof basically follows the same steps as in the proof of Theorem 2.3 in \cite{brs}. \\
		Let $u\in \mathcal{GM}_s^{p,q}$ satisfying $\supp \F (u) \subset P_R$. \\
		First of all we find the following representation by Taylor expansion
		\[ e^{\i u}-1 = \sum_{l=1}^r \frac{(\i u)^l}{l!} + \sum_{l=r+1}^{\infty} \frac{(\i u)^l}{l!} \]
		with the norm estimate
		\[ \| e^{\i u}-1\|_{\mathcal{GM}_s^{p,q}} \leq \Big\| \sum_{l=1}^r \frac{(\i u)^l}{l!} \Big\|_{\mathcal{GM}_s^{p,q}} + \Big\| \sum_{l=r+1}^{\infty} \frac{(\i u)^l}{l!} \Big\|_{\mathcal{GM}_s^{p,q}} = S_1 + S_2. \]
		By Theorem \ref{algebra} we obtain
		\[ S_2 \leq \sum_{l=r+1}^\infty \frac{1}{l!} \|u^l\|_{\mathcal{GM}_s^{p,q}} \leq \frac{1}{C} \sum_{l=r+1}^{\infty} \frac{ (C\|u\|_{\mathcal{GM}_s^{p,q}})^l}{l!}. \]
		Now we choose $r$ as a function of $\|u\|_{\mathcal{GM}_s^{p,q}}$.
		\begin{enumerate}
			\item $C\|u\|_{\mathcal{GM}_s^{p,q}}>1$. Assume that 
			\[ 3C\|u\|_{\mathcal{GM}_s^{p,q}} \leq r \leq 3C \|u\|_{\mathcal{GM}_s^{p,q}} +1 \]
			and recall Stirling's formula $l! = \Gamma(l+1) \geq l^l e^{-l} \sqrt{2\pi l}$. Thus, we get
			\begin{eqnarray*}
				\sum_{l=r+1}^{\infty} \frac{(C\|u\|_{\mathcal{GM}_s^{p,q}})^l}{l!} & \leq & \sum_{l=r+1}^{\infty} \left( \frac{r}{l} \right)^l \left(\frac{e}{3}\right)^l \frac{1}{\sqrt{2\pi l}} \\
					& \leq & \sum_{l=r+1}^\infty \left(\frac{e}{3} \right)^l \\
					& \leq & \frac{3}{3-e}.
			\end{eqnarray*}
			\item $C\|u\|_{\mathcal{GM}_s^{p,q}}\leq 1$. It follows
			\begin{eqnarray*}
				 \sum_{l=r+1}^{\infty} \frac{(C\|u\|_{\mathcal{GM}_s^{p,q}})^l}{l!} & \leq & \sum_{l=1}^{\infty} \frac{(C\|u\|_{\mathcal{GM}_s^{p,q}})^l}{l!} \\
					& \leq & C e \|u\|_{\mathcal{GM}_s^{p,q}}.
			\end{eqnarray*}
		\end{enumerate}
		Now an appropriate estimate for $S_1$ needs to be shown. \\
		We have
		\begin{align*}
		S_1 = \Big\| \sum_{l=1}^r \frac{(\i u)^l}{l!} \Big\|_{\mathcal{GM}^{p,q}_{s}(\R^n)} = \Bigg( \sum_{k\in\Z^n} e^{|k|^{\frac{1}{s}}q} \Big\|\Box_k \Big(\sum_{l=1}^{r} \frac{(\i u)^l}{l!} \Big) \Big\|_{L^p}^q \Bigg)^{\frac{1}{q}} 
	\end{align*}
	\begin{eqnarray*}
		& = & \Bigg( \sum_{\substack{k\in\Z^n, \\ -Rr-1<k_i< Rr+1, \\ i=1,\ldots,n}} e^{|k|^{\frac{1}{s}}q} \Big\|\Box_k \Big(\sum_{l=1}^{r} \frac{(\i u)^l}{l!} \Big) \Big\|_{L^p}^q \Bigg)^{\frac{1}{q}} \\
			& \leq & \Bigg( \sum_{\substack{k\in\Z^n, \\ -Rr-1<k_i< Rr+1, \\ i=1,\ldots,n}} e^{|k|^{\frac{1}{s}}q} \|\Box_k (e^{\i u}-1) \|_{L^p}^q \Bigg)^{\frac{1}{q}} + S_2,
	\end{eqnarray*}
where $S_2$ converges as shown in the previous considerations. Estimating the first sum due to Lemma \ref{expfunc}, Lemma \ref{bernstein} and Minkowski's inequality we get
	\begin{align*}
		\Bigg( \sum_{\substack{k\in\Z^n, \\ -Rr-1<k_i< Rr+1, \\ i=1,\ldots,n}} e^{|k|^{\frac{1}{s}}q} \| \Box_k (e^{\i u}-1) \|_{L^p}^q \Bigg)^{\frac{1}{q}} & & \hspace{7cm}
	\end{align*}
	\begin{eqnarray*}
		& = &	\Bigg( \sum_{\substack{k\in\Z^n, \\ -Rr-1<k_i< Rr+1, \\ i=1,\ldots,n}} e^{|k|^{\frac{1}{s}}q} \Big\|\Box_k \big(e^{\i (\sum_{l\in \Z^n} \Box_l u)}-1 \big) \Big\|_{L^p}^q \Bigg)^{\frac{1}{q}} \\
		& \leq & C_0 \Bigg( \sum_{\substack{k\in\Z^n, \\ -Rr-1<k_i< Rr+1, \\ i=1,\ldots,n}} e^{|k|^{\frac{1}{s}}q} \Big\|e^{\i (\sum_{l\in \Z^n} \Box_l u)}-1 \Big\|_{L^p}^q \Bigg)^{\frac{1}{q}} \\
		& \leq & C_0 \Bigg( \sum_{\substack{k\in\Z^n, \\ -Rr-1<k_i< Rr+1, \\ i=1,\ldots,n}} e^{|k|^{\frac{1}{s}}q} \Big\|\sum_{l\in \Z^n} \Box_l u \Big\|_{L^p}^q \Bigg)^{\frac{1}{q}} \\
		& \leq & C_0 \Bigg( \sum_{\substack{k\in\Z^n, \\ -Rr-1<k_i< Rr+1, \\ i=1,\ldots,n}} e^{|k|^{\frac{1}{s}}q} \Big( \sum_{\substack{l\in\Z^n, \\ |k|\leq |l|}} \|\Box_l u \|_{L^p} \Big)^q \Bigg)^{\frac{1}{q}} \\
		& & \quad + C_0 \Bigg( \sum_{\substack{k\in\Z^n, \\ -Rr-1<k_i< Rr+1, \\ i=1,\ldots,n}} e^{|k|^{\frac{1}{s}}q} \Big( \sum_{\substack{l\in\Z^n, \\ |l|\leq |k|}} \|\Box_l u \|_{L^p} \Big)^q \Bigg)^{\frac{1}{q}} \\
		& = & T_1 + T_2.
	\end{eqnarray*}
Now we consider the terms $T_1$ and $T_2$ separately. Thus, we obtain
	\begin{eqnarray*}
		T_1 & \leq & C_0 e^{(\sqrt{n} (Rr+1))^{\frac{1}{s}}} \Bigg( \sum_{\substack{k\in\Z^n, \\ -Rr-1<k_i< Rr+1, \\ i=1,\ldots,n}} e^{-(\sqrt{n} (Rr+1))^{\frac{1}{s}}q} e^{|k|^{\frac{1}{s}}q} \Big( \sum_{\substack{l\in\Z^n, \\ |k|\leq |l|}} \|\Box_l u \|_{L^p} \Big)^q \Bigg)^{\frac{1}{q}} \\
			& \leq & C_0 e^{(\sqrt{n} (Rr+1))^{\frac{1}{s}}} \Bigg( \sum_{\substack{k\in\Z^n, \\ -Rr-1<k_i< Rr+1, \\ i=1,\ldots,n}} e^{-|k|^{\frac{1}{s}}q} e^{\frac{|k|^{\frac{1}{s}}}{2}q} \Big( \sum_{\substack{l\in\Z^n, \\ |k|\leq |l|}} e^{\frac{|k|^{\frac{1}{s}}}{2}} e^{-\frac{|l|^{\frac{1}{s}}}{2}} e^{-\frac{|l|^{\frac{1}{s}}}{2}} e^{|l|^{\frac{1}{s}}} \|\Box_l u \|_{L^p} \Big)^q \Bigg)^{\frac{1}{q}} 
	\end{eqnarray*}
	\begin{eqnarray*}
			& \leq & C_0 e^{(\sqrt{n} (Rr+1))^{\frac{1}{s}}} \left( \sum_{k\in\Z^n} e^{-\frac{|k|^{\frac{1}{s}}}{2}q} \right)^{\frac{1}{q}} \sum_{l\in\Z^n} e^{-\frac{|l|^{\frac{1}{s}}}{2}} e^{|l|^{\frac{1}{s}}} \|\Box_l u \|_{L^p} \\
			& \leq & C_1 e^{(\sqrt{n} (Rr+1))^{\frac{1}{s}}} \left( \sum_{l\in\Z^n} e^{-\frac{|l|^{\frac{1}{s}}}{2}q'} \right)^{\frac{1}{q'}} \left( \sum_{l\in\Z^n} e^{|l|^{\frac{1}{s}}q} \|\Box_l u \|_{L^p}^q \right)^{\frac{1}{q}} \\
			& \leq & C_2 e^{(\sqrt{n} (Rr+1))^{\frac{1}{s}}} \|u\|_{\mathcal{GM}^{p,q}_{s}(\R^n)},
	\end{eqnarray*}
where $C_2>0$ is a constant depending on the dimension $n$, the weight parameter $s$ and the integrability parameter $q$. Furthermore we get 
	\begin{eqnarray*}
		T_2 & \leq & C_0 e^{2(\sqrt{n} (Rr+1))^{\frac{1}{s}}} \Bigg( \sum_{\substack{k\in\Z^n, \\ -Rr-1<k_i< Rr+1, \\ i=1,\ldots,n}} e^{2(\sqrt{n} (Rr+1))^{\frac{1}{s}}q} e^{|k|^{\frac{1}{s}}q} \Big( \sum_{\substack{l\in\Z^n, \\ |l|\leq |k|}} \|\Box_l u \|_{L^p} \Big)^q \Bigg)^{\frac{1}{q}} \\
			& \leq & C_0 e^{2(\sqrt{n} (Rr+1))^{\frac{1}{s}}} \Bigg( \sum_{\substack{k\in\Z^n, \\ -Rr-1<k_i< Rr+1, \\ i=1,\ldots,n}} e^{-2|k|^{\frac{1}{s}}q} e^{|k|^{\frac{1}{s}}q} \Big( \sum_{\substack{l\in\Z^n, \\ |l|\leq |k|}} \|\Box_l u \|_{L^p} \Big)^q \Bigg)^{\frac{1}{q}} \\
			& \leq & \tilde{C}_1 e^{2(\sqrt{n} (Rr+1))^{\frac{1}{s}}} \sum_{l\in\Z^n} e^{-|l|^{\frac{1}{s}}} e^{|l|^{\frac{1}{s}}} \|\Box_l u \|_{L^p} \\
			& \leq & \tilde{C}_1 e^{2(\sqrt{n} (Rr+1))^{\frac{1}{s}}} \left( \sum_{l\in\Z^n} e^{-|l|^{\frac{1}{s}}q'} \right)^{\frac{1}{q'}} \left( \sum_{l\in\Z^n} e^{|l|^{\frac{1}{s}}q} \|\Box_l u \|_{L^p}^q \right)^{\frac{1}{q}} \\
			& \leq & \tilde{C}_2 e^{2(\sqrt{n} (Rr+1))^{\frac{1}{s}}} \|u\|_{\mathcal{GM}^{p,q}_{s}(\R^n)},
	\end{eqnarray*}
where $\tilde{C}_2>0$ is a constant depending on the same parameters as $C_2$ in the previous estimate. Summarizing we have shown that
\[ \Big\|\sum_{l=1}^r \frac{(\i u)^l}{l!} \Big\|_{\mathcal{GM}^{p,q}_{s}} \leq c_0 e^{2(\sqrt{n}(Rr+1))^{\frac{1}{s}} } \|u\|_{\mathcal{GM}^{p,q}_{s}} + c_1 \|u\|_{\mathcal{GM}^{p,q}_{s}} \]
with positive constants $c_0, c_1$ depending on $n$, $s$ and $q$. \\
	Up to now we have proved
	\begin{equation} \label{expest}
		\| e^{\i u}-1 \|_{\mathcal{GM}^{p,q}_{s}} \leq c \|u\|_{\mathcal{GM}^{p,q}_{s}} \left( 1 + e^{b R^{\frac{1}{s}} \| u\|_{\mathcal{GM}^{p,q}_{s}}^{\frac{1}{s}} } \right)
	\end{equation}
	for admissible $u\in \mathcal{GM}^{p,q}_{s}(\R^n)$ with positive constants $b,c$ depending on $n,s$ and $q$ but independent of $u, r$ and $R$. \\
	The next step consists of choosing a general $u\in \mathcal{GM}^{p,q}_{s}(\R^n)$. For decomposing $u$ on the phase space we introduce two functions $\chi_{R,\epsilon}$ and $\chi_R$, that is the characteristic function of $P_R(\epsilon)$ and the characteristic function of the set $P_R$, respectively. By defining 
	\begin{eqnarray*}
		u_\epsilon (x) & = & \F^{-1} [\chi_{R,\epsilon} (\xi) (\F u)(\xi) ] (x) , \\
		u_0 (x) & = & \F^{-1} [\chi_R (\xi) (\F u) (\xi) ] (x)
	\end{eqnarray*}
	we can rewrite $u$ as 
	\begin{equation} \label{urepr}
		u(x) = u_0(x) + \sum_{\epsilon\in I} u_\epsilon (x) , 
	\end{equation}
	where $I$ is the set of all $\epsilon=(\epsilon_1,\ldots, \epsilon_n)$ with $\epsilon_j \in \{0,1\}$, $j=1,\ldots,n$. This representation obviously gives
	\begin{equation} \label{normrepr}
		\| u\|_{\mathcal{GM}^{p,q}_{s}(\R^n)} = \| u_0\|_{\mathcal{GM}^{p,q}_{s}(\R^n)}  + \sum_{\epsilon\in I} \| u_\epsilon \|_{\mathcal{GM}^{p,q}_{s}(\R^n)} 
	\end{equation}
	taking into account the support property. Due to representation (\ref{urepr}) and Lemma \ref{lma46brs} we obtain
	\[ e^{\i u} -1 = \sum_{l=1}^{2^n +1} \sum_{0\leq j_1 <\ldots < j_l \leq 2^n} (e^{\i u_{j_1}} -1)\cdot \ldots \cdot (e^{\i u_{j_l}} -1) \]
	by using an appropriate numbering. Theorem \ref{algebra} immediately yields
	\[ \|e^{\i u} -1\|_{\mathcal{GM}^{p,q}_{s}} \leq C^{l-1} \sum_{l=1}^{2^n +1} \sum_{0\leq j_1 <\ldots < j_l \leq 2^n} \|e^{\i u_{j_1}} -1 \|_{\mathcal{GM}^{p,q}_{s}} \cdot \ldots \cdot \| e^{\i u_{j_l}} -1 \|_{\mathcal{GM}^{p,q}_{s}}. \] 
	By Proposition \ref{subalgebra}, \eqref{normrepr} and \eqref{expest} it follows
	\begin{eqnarray}
		\| e^{\i u_{j_k}} -1\|_{\mathcal{GM}^{p,q}_{s}} & \leq & \frac{1}{D} \left(e^{D \|u_{j_k}\|_{\mathcal{GM}^{p,q}_{s}}} -1 \right) \nonumber \\
			& \leq & \frac{1}{D} \left( e^{D \|u\|_{\mathcal{GM}^{p,q}_{s}}} -1 \right), \label{est1} \\
		\| e^{\i u_0} -1\|_{\mathcal{GM}^{p,q}_{s}} & \leq &  c \|u\|_{\mathcal{GM}^{p,q}_{s}} \left( 1 + e^{b R^{\frac{1}{s}} \| u\|_{\mathcal{GM}^{p,q}_{s}}^{\frac{1}{s}} } \right) \label{est2} 
	\end{eqnarray}
	for an admissible choice of $j_k$. Note that for $\|u\|_{\mathcal{GM}^{p,q}_{s}(\R^n)} \leq 1$ our considerations are getting trivial. Therefore we restrict ourselves to the case $\|u\|_{\mathcal{GM}^{p,q}_{s}(\R^n)} >1$. \\
	The final step of the proof is to choose the number $R$ as a function of $\|u\|_{\mathcal{GM}^{p,q}_{s}}$ such that \eqref{est1} and \eqref{est2} will be  approximately of the same size. As mentioned in Proposition \ref{subalgebra} we know that the algebra constant $D$ in \eqref{est1} is a function of $R$, i.e.,
	\[ D = D(R) = C_0 \left( s \omega_n (\delta q')^{-sn} \int_{\delta q' R^{\frac{1}{s}}}^\infty y^{sn-1} e^{-y} \, dy \right)^{\frac{1}{q'}}. \]
	Taking into account that 
	\begin{itemize}
		\item $D$ is strictly monotone positive,
		\item $D(0)>1$ and
		\item $\displaystyle \lim_{R\to \infty} D(R) = 0$
	\end{itemize}
	we can easily set $D(R) = \|u\|_{\mathcal{GM}^{p,q}_{s}(\R^n)}^{\frac{1}{s}-1}$. In view of Lemma \ref{lma45brs} this gives
	\[ \left( c f(\delta q' R^{\frac{1}{s}}) \right)^{\frac{1}{q'}} = \|u\|_{\mathcal{GM}^{p,q}_{s}(\R^n)}^{\frac{1}{s}-1}. \]
	Thus, by Lemma \ref{lma45brs} it follows
	\[ R = (\delta q')^{-s} \left( g \left(\frac{\|u\|_{\mathcal{GM}^{p,q}_{s}}^{q'(\frac{1}{s}-1)}}{c} \right) \right)^s \]
	and moreover,
	\[ R \leq C \left(\log c +(q'-\frac{q'}{s}) \log \|u\|_{\mathcal{GM}^{p,q}_{s}} \right)^s. \]
	Note that the constants $C$ and $c$ are independent of $u$. This together with (\ref{est1}) and (\ref{est2}) gives the desired result and completes the proof. 
\end{proof}
	
	\begin{lma} \label{ContExp}
		Assume $u\in \mathcal{GM}_s^{p,q}$ to be fixed and define a function $g: \R \mapsto \mathcal{GM}_s^{p,q}$ by $g(\xi) = e^{\i u(x) \xi}-1$. Then the function $g$ is continuous.
	\end{lma}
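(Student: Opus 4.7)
The plan is to reduce continuity of $g$ at an arbitrary point $\xi_0\in\R$ to the superposition estimate of Lemma \ref{estSuper}. The key algebraic identity is the pointwise factorization
\[ g(\xi) - g(\xi_0) = e^{\i u \xi} - e^{\i u \xi_0} = e^{\i u \xi_0}\bigl(e^{\i u(\xi-\xi_0)} - 1\bigr), \]
which, after writing $e^{\i u \xi_0} = 1 + (e^{\i u \xi_0} - 1)$, becomes
\[ g(\xi) - g(\xi_0) = \bigl(e^{\i u(\xi-\xi_0)} - 1\bigr) + \bigl(e^{\i u \xi_0} - 1\bigr)\bigl(e^{\i u(\xi-\xi_0)} - 1\bigr). \]
Both factors on the right are genuinely elements of $\mathcal{GM}^{p,q}_s$ by Lemma \ref{estSuper}, whereas $e^{\i u \xi}$ itself is not (it does not decay), so the factorization rather than a direct estimate is essential.

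Next I would apply the algebra property of $\mathcal{GM}^{p,q}_s$ (Theorem \ref{algebra} combined with the embedding $\mathcal{GM}^{p,q}_s \hookrightarrow \mathcal{GM}^{2p,q}_s$ from Proposition \ref{ModInclusion}) to the product term, obtaining
\[ \|g(\xi) - g(\xi_0)\|_{\mathcal{GM}^{p,q}_s} \leq \bigl(1 + C\|e^{\i u \xi_0} - 1\|_{\mathcal{GM}^{p,q}_s}\bigr)\,\|e^{\i u(\xi-\xi_0)} - 1\|_{\mathcal{GM}^{p,q}_s}. \]
By Lemma \ref{estSuper} applied to $\xi_0 u \in \mathcal{GM}^{p,q}_s$, the factor in parentheses is a finite constant depending only on $\xi_0$ and $\|u\|_{\mathcal{GM}^{p,q}_s}$, hence independent of $\xi$.

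For the remaining factor I use that the norm on $\mathcal{GM}^{p,q}_s$ is homogeneous in scalars, so $\|(\xi-\xi_0)u\|_{\mathcal{GM}^{p,q}_s} = |\xi-\xi_0|\,\|u\|_{\mathcal{GM}^{p,q}_s}$. Restricting to $\xi$ with $|\xi-\xi_0|\,\|u\|_{\mathcal{GM}^{p,q}_s}\leq 1$, the ``small norm'' branch of Lemma \ref{estSuper} gives
\[ \|e^{\i u(\xi-\xi_0)} - 1\|_{\mathcal{GM}^{p,q}_s} \leq c\,|\xi-\xi_0|\,\|u\|_{\mathcal{GM}^{p,q}_s}, \]
which tends to $0$ as $\xi\to\xi_0$. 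Combining the two bounds shows $\|g(\xi)-g(\xi_0)\|_{\mathcal{GM}^{p,q}_s}\to 0$ and yields continuity at $\xi_0$.

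There is no deep obstacle; this is a standard continuity-from-the-estimate argument. The only care required is (i) to perform the factorization so that the algebra property is applied to a product of two elements that genuinely lie in the space (neither $e^{\i u \xi}$ nor $e^{\i u \xi_0}$ do, only their differences from $1$), and (ii) to invoke the favorable linear branch of Lemma \ref{estSuper}, which forces the implicit assumption $1\leq p,q<\infty$ that was already in force there. The superlinear branch plays no role because shrinking $|\xi-\xi_0|$ puts us in the regime of small modulation-space norm.
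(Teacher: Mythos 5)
Your proof is correct and follows essentially the same route as the paper: the identical factorization of $e^{\i u\xi}-e^{\i u\xi_0}$ into a sum of terms genuinely lying in $\mathcal{GM}^{p,q}_s$, the same application of the algebra property to the product term, and the same use of the linear (small-norm) branch of Lemma \ref{estSuper} after restricting to $|\xi-\xi_0|\,\|u\|_{\mathcal{GM}^{p,q}_s}\leq 1$. Your added remarks on why the factorization is needed and on homogeneity of the norm only make the argument more explicit than the paper's version.
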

	\begin{proof}
		By the identity 
		\[ e^{\i \xi} - e^{\i \eta} = (e^{\i \eta} - 1)(e^{\i (\xi-\eta)} -1) + (e^{\i (\xi-\eta)} -1), \]
		applying the algebra property and Lemma \ref{estSuper} it follows
		\begin{eqnarray*}
			\|e^{\i u(\cdot) \xi} - e^{\i u(\cdot) \eta} \|_{\mathcal{GM}_s^{p,q}} & = & \|(e^{\i u(\cdot) \eta} - 1)(e^{\i u(\cdot) (\xi-\eta)} -1) + (e^{\i u(\cdot) (\xi-\eta)} -1) \|_{\mathcal{GM}_s^{p,q}} \\
				& \leq & \big(C \|e^{\i u(\cdot) \eta} - 1\|_{\mathcal{GM}_s^{p,q}} + 1 \big) \|e^{\i u(\cdot) (\xi-\eta) } - 1\|_{\mathcal{GM}_s^{p,q}} \\
				& \leq & \big(C_0 e^{b \|u(\cdot) \eta\|^{\frac{1}{s}}_{\mathcal{GM}_s^{p,q}}\log \|u(\cdot) \eta\|_{\mathcal{GM}_s^{p,q}}} + 1 \big) \| u(\cdot) (\xi -\eta) \|_{\mathcal{GM}_s^{p,q}} \\
				& \leq & C_1 \|u\|_{\mathcal{GM}_s^{p,q}} |\xi-\eta | \\
				& \leq & C_2 |\xi -\eta|. 
		\end{eqnarray*}
		Since we want to show continuity we have $|\xi-\eta|<\delta$ with sufficiently small $\delta$. Therefore it is natural to assume $\|u(\cdot) (\xi- \eta)\|_{\mathcal{GM}_s^{p,q}} \leq 1$. The proof is completed.
	\end{proof}
	
	Now we can establish the following result, where basically ideas of \cite{brs} are used. 
	\begin{thm} \label{Superposition}
		Let the weight parameter $s>1$ and $\mu$ be a complex measure on $\R$ such that 
		\begin{equation} \label{FourierEst}
			L_1(\lambda) = \int_{\R} e^{\lambda (|\xi|^{\frac{1}{s}} \log |\xi| )} \, d|\mu| (\xi) < \infty 
		\end{equation}
		for any $\lambda >0 $ and such that $\mu(\R) = 0$. \\
		Furthermore assume that the function $f$ is the inverse Fourier transform of $\mu$. Then $f\in C^\infty$ and the composition operator $T_f: u \mapsto f \circ u$ maps $\mathcal{GM}_s^{p,q}$ into $\mathcal{GM}_s^{p,q}$.
	\end{thm}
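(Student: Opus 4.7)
My plan is to exploit the representation $f = \mathcal{F}^{-1}\mu$ together with the cancellation $\mu(\R)=0$ in order to write $f(u(\cdot))$ as a vector-valued integral whose integrand already belongs to $\mathcal{GM}_s^{p,q}$, and then to bound each fibre via Lemma~\ref{estSuper}. First of all, the assumption $L_1(\lambda)<\infty$ for every $\lambda>0$ forces $\int_\R |\xi|^k \,d|\mu|(\xi)<\infty$ for all $k\in\N$, so that differentiation under the integral in $f(t) = (2\pi)^{-1/2}\int_\R e^{\i t\xi}\,d\mu(\xi)$ is justified and yields $f \in C^\infty(\R)$ with bounded derivatives of every order. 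Since $\mu(\R)=0$, I can rewrite
\[ f(u(x)) \;=\; (2\pi)^{-\tfrac{1}{2}} \int_\R \bigl(e^{\i u(x)\xi}-1\bigr)\,d\mu(\xi), \]
which is the crucial step because each fibre $x\mapsto e^{\i u(x)\xi}-1$ vanishes at $\xi=0$ and decays (in norm) in a way we can control.

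Next, the embedding $\mathcal{GM}_s^{p,q}\subset L^\infty$ ensures $u$ is a genuine bounded function, so the integrand is well defined pointwise; interpreting it as a Bochner integral with values in $\mathcal{GM}_s^{p,q}$ is legitimate thanks to the continuity in $\xi$ proved in Lemma~\ref{ContExp} and the norm estimate carried out below. Provided that estimate gives an $L^1(d|\mu|)$-bound, Minkowski's integral inequality yields
\[ \|f\circ u\|_{\mathcal{GM}_s^{p,q}} \;\leq\; (2\pi)^{-\tfrac{1}{2}} \int_\R \|e^{\i u(\cdot)\xi}-1\|_{\mathcal{GM}_s^{p,q}} \,d|\mu|(\xi). \]

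To bound the integrand I use the homogeneity $\|u\xi\|_{\mathcal{GM}_s^{p,q}}=|\xi|\,\|u\|_{\mathcal{GM}_s^{p,q}}$ and apply Lemma~\ref{estSuper}. For $|\xi|\,\|u\|_{\mathcal{GM}_s^{p,q}}\leq 1$ the linear estimate $c|\xi|\,\|u\|_{\mathcal{GM}_s^{p,q}}$ is integrable against $d|\mu|$ because the total variation is finite (take $\lambda\downarrow 0$ in $L_1$). For $|\xi|\,\|u\|_{\mathcal{GM}_s^{p,q}}> 1$ I obtain a bound of the shape
\[ c\,\exp\!\Bigl(b\,\|u\|_{\mathcal{GM}_s^{p,q}}^{\tfrac{1}{s}}|\xi|^{\tfrac{1}{s}}\log|\xi| + b\,\|u\|_{\mathcal{GM}_s^{p,q}}^{\tfrac{1}{s}}\log\|u\|_{\mathcal{GM}_s^{p,q}}\cdot|\xi|^{\tfrac{1}{s}}\Bigr). \]
Since $|\xi|^{1/s}=o(|\xi|^{1/s}\log|\xi|)$ as $|\xi|\to\infty$, the second summand in the exponent can be absorbed into the first one at the price of replacing $b\,\|u\|_{\mathcal{GM}_s^{p,q}}^{1/s}$ by some larger $\lambda=\lambda(u)$. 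The integrand is therefore dominated by $C(u)\,e^{\lambda|\xi|^{1/s}\log|\xi|}$, and hypothesis~\eqref{FourierEst} delivers $L_1(\lambda)<\infty$, which closes the estimate.

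The step I expect to be delicate is the last one: the super-algebra estimate from Lemma~\ref{estSuper} carries an extra factor $\log\|u\|_{\mathcal{GM}_s^{p,q}}$ inside the exponent, and this is precisely the reason why the condition on $\mu$ is formulated with the growth $|\xi|^{1/s}\log|\xi|$ rather than merely $|\xi|^{1/s}$. Verifying Bochner-measurability and exchanging integration against $d\mu$ with the $\mathcal{GM}_s^{p,q}$-norm (together with this absorption argument) will be the technical heart of the proof; everything else is a direct assembly of the previous lemmata.
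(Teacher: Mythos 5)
Your proposal is correct and follows essentially the same route as the paper: rewrite $f(u)=(2\pi)^{-1/2}\int_\R(e^{\i u\xi}-1)\,d\mu(\xi)$ using $\mu(\R)=0$, interpret this as a Bochner integral via Lemma~\ref{ContExp}, apply Minkowski's inequality, and control the fibre norms with Lemma~\ref{estSuper} and hypothesis~\eqref{FourierEst}. The only differences are cosmetic: you spell out the absorption of the $\log\|u\|_{\mathcal{GM}_s^{p,q}}$ term into the $|\xi|^{1/s}\log|\xi|$ growth (which the paper leaves implicit), while the paper additionally records the routine Jordan decomposition of $\mu$ into four positive measures before integrating.
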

	\begin{proof}
		Equation \eqref{FourierEst} yields $\int_{\R} d|\mu|(\xi) < \infty$. Thus $\mu$ is a finite measure and $\mu(\R) =0$ makes sense. Now we define the inverse Fourier transform of $\mu$ 
		\[ f(t) = \frac{1}{\sqrt{2\pi}} \int_{\R} e^{\i \xi t} \, d \mu(\xi). \]
		Moreover $\int_{\R} |(\i \xi)^j| \, d|\mu|(\xi) <\infty$ is deduced from equation \eqref{FourierEst} for all $j\in\N$. This gives $f\in C^\infty$ and due to $\mu(\R)=0$ we can also write $f$ as follows
		\[ f(t) = \frac{1}{\sqrt{2\pi}} \int_{\R} (e^{\i \xi t}-1) \, d \mu(\xi). \]
		Since $\mu$ is a complex measure we can split it up into real part $\mu_r$ and imaginary part $\mu_i$ where each of them is a signed measure. Thus we have $\mu(E) = \mu_r(E) + \i \mu_i(E)$ for all measurable sets $E$. By Jordan decomposition we obtain
		\[ \mu(E) = \mu_r^+(E) - \mu_r^-(E) +\i (\mu_i^+(E) - \mu_i^-(E)). \]
		Here it is $\mu^+(E) = \mu(E\cap P)$ and $\mu^-(E) = -\mu(E\cap N)$, where the set $P$ is a positive set for $\mu$ and $N$ is a negative set for $\mu$. Without loss of generality we proceed our computations only with the measure $\mu_r^+$. For all measurable sets $E$ we have $\mu_r^+(E) \leq |\mu|(E)$. \\
		Let $u\in \mathcal{GM}_s^{p,q}$ and define the function $g(\xi) = e^{\i u(x) \xi} -1$ analogously to Lemma \ref{ContExp}. Then $g$ is Bochner integrable because of its continuity and taking into account that the measure $\mu_r^+$ is finite. Therefore we obtain the Bochner integral
		\[ \int_{\R} (e^{\i u(x) \xi} -1)\, d\mu_r^+(\xi) = \int_{\R} g(\xi) \, d\mu_r^+(\xi) \]
		with values in $\mathcal{GM}_s^{p,q}$. By applying Minkowski inequality it follows
		\[ \| \int_{\R} (e^{\i u(\cdot) \xi} -1)\, d\mu_r^+(\xi) \|_{\mathcal{GM}_s^{p,q}} \leq \int_{\R} \| e^{\i u(\cdot) \xi} -1 \|_{\mathcal{GM}_s^{p,q}} \, d|\mu|(\xi). \] 
		First suppose that $|\xi|\geq e$. Then Lemma \ref{estSuper} together with equation \eqref{FourierEst} gives
		\begin{eqnarray*}
			\int_{|\xi|\geq e} \| e^{\i u(\cdot) \xi} -1 \|_{\mathcal{GM}_s^{p,q}} \, d|\mu|(\xi) & \leq & C \int_{|\xi|\geq e} e^{b \|\xi u\|^{\frac{1}{s}}_{\mathcal{GM}_s^{p,q}}\log \|\xi u\|_{\mathcal{GM}_s^{p,q}} } \, d|\mu|(\xi) \\
				& < & \infty.
		\end{eqnarray*}
		For $|\xi|\leq e$ the integral obviously converges. \\
		The same estimates also hold for the measures $\mu_r^-$, $\mu_i^+$ and $\mu_i^-$. Thus the result is obtained by
		\begin{eqnarray*}
			 \|\sqrt{2\pi} f(u(x))\|_{\mathcal{GM}_s^{p,q}} & = & \| \int_{\R} g(\xi) \, d\mu_r^+ - \int_{\R} g(\xi) \, d\mu_r^- \\
				& & \qquad + \i \int_{\R} g(\xi) \, d\mu_i^+ - \i \int_{\R} g(\xi) \, d\mu_i^- \|_{\mathcal{GM}_s^{p,q}} \\
				& \leq &  \int_{\R} \| g(\xi) \, d\mu_r^+ \|_{\mathcal{GM}_s^{p,q}} + \int_{\R} \| g(\xi) \, d\mu_r^- \|_{\mathcal{GM}_s^{p,q}} \\
				& & \qquad + \int_{\R} \| g(\xi) \, d\mu_i^+ \|_{\mathcal{GM}_s^{p,q}} + \int_{\R} \| g(\xi) \, d\mu_i^- \|_{\mathcal{GM}_s^{p,q}},
		\end{eqnarray*}
		where every integral on the right-hand side is finite. Thus, the statement is proved.
	\end{proof}
	For practical reasons we remark the following consequence. 
	\begin{cor}
		Let the weight parameter $s>1$ and $\mu$ be a complex measure on $\R$ with the corresponding bounded density function $g$, i.e., $d\mu (\xi) = g(\xi) \, d\xi$. Suppose that
		\begin{equation} \label{densCond}
			\lim_{|\xi|\to \infty} \frac{|\xi|^{\frac{1}{s}} \log |\xi| }{\log |g(\xi)|} = 0
		\end{equation}
		and $\displaystyle \int_{\R} d\mu (\xi) = \int_{\R} g(\xi) \, d\xi = 0$. Assume the function $f$ to be the inverse Fourier transform of $g$. Then $f\in C^\infty$ and the composition operator $T_f: u \mapsto f \circ u$ maps $\mathcal{GM}_s^{p,q}$ into $\mathcal{GM}_s^{p,q}$.
	\end{cor}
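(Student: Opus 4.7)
The plan is to reduce this corollary directly to Theorem \ref{Superposition} by verifying its hypotheses for the measure $d\mu(\xi)=g(\xi)\,d\xi$. The vanishing condition $\mu(\R)=0$ is given by assumption, so the only non-trivial task is to check that
\[
L_1(\lambda) = \int_{\R} e^{\lambda |\xi|^{\frac{1}{s}}\log|\xi|}\, |g(\xi)|\, d\xi < \infty
\]
holds for every $\lambda>0$.

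First I would unpack the hypothesis \eqref{densCond} as a quantitative decay estimate on $g$. Since $|\xi|^{\frac{1}{s}}\log|\xi|\to+\infty$ as $|\xi|\to\infty$, the assumption that the ratio tends to $0$ forces $\log|g(\xi)|\to-\infty$ at a rate dominating $|\xi|^{\frac{1}{s}}\log|\xi|$. Concretely, for every $M>0$ there exists $R=R(M)>1$ such that
\[
|g(\xi)| \le e^{-M|\xi|^{\frac{1}{s}}\log|\xi|}\qquad \text{for all } |\xi|>R.
\]

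Next, fix $\lambda>0$ and apply this estimate with $M := \lambda+2$. Splitting $L_1(\lambda)$ over the regions $\{|\xi|\le R\}$ and $\{|\xi|>R\}$, the first contribution is finite because $g$ is bounded and the exponential weight is continuous on a compact set. On the complementary region the integrand is majorised by $e^{-2|\xi|^{\frac{1}{s}}\log|\xi|} = |\xi|^{-2|\xi|^{\frac{1}{s}}}$, which decays faster than any polynomial and is therefore integrable on $\{|\xi|>R\}$. Consequently $L_1(\lambda)<\infty$ for every $\lambda>0$, condition \eqref{FourierEst} holds, and Theorem \ref{Superposition} applies directly to yield both $f\in C^\infty$ and the claim $T_f\colon \mathcal{GM}_s^{p,q} \to \mathcal{GM}_s^{p,q}$.

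The main obstacle is essentially bookkeeping: translating the limit assumption \eqref{densCond} into a uniform decay estimate that can absorb an arbitrary parameter $\lambda$ in the exponential weight. All the genuine analytic content — the algebra property of $\mathcal{GM}_s^{p,q}$, the subalgebra estimate of Proposition \ref{subalgebra}, the control of $\|e^{\i u}-1\|_{\mathcal{GM}_s^{p,q}}$ from Lemma \ref{estSuper}, and the Bochner integration argument — is already encapsulated in Theorem \ref{Superposition}, so no further technical difficulty arises at this stage.
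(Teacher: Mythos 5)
Your proposal is correct and follows essentially the same route as the paper: reduce to Theorem \ref{Superposition} and verify \eqref{FourierEst} by converting the limit \eqref{densCond} into a pointwise decay bound $|g(\xi)|\leq e^{-M|\xi|^{1/s}\log|\xi|}$ for large $|\xi|$ (the paper takes $M=2\lambda$ where you take $\lambda+2$, a cosmetic difference), then majorising the tail of the integral by an integrable function. The only detail worth making explicit is that the boundedness of $g$ is what forces $\log|g(\xi)|\to-\infty$ rather than $+\infty$, which the paper states and you use implicitly.
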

	\begin{proof}
		Most of the work has been done in the proof of Theorem \ref{Superposition} where we basically followed \cite{brs}. Here we refer again to \cite{brs}. \\
		The condition \eqref{densCond} yields that the modulus of $\displaystyle \lim_{|\xi|\to\infty} \log |g(\xi)|$ needs to be infinity. This fact together with the boundedness of $g$ gives $\displaystyle \lim_{|\xi|\to\infty} g(\xi)=0$. Moreover, by \eqref{densCond} there exists a sufficiently large number $N>0$ such that 
		\[ - \frac{|\xi|^{\frac{1}{s}} \log |\xi| }{\log |g(\xi)|} \leq \frac{1}{2\lambda} \]
		for all $|\xi|> N$ with $\lambda >0$. Thus, we obtain
		\[ |g(\xi)| \leq e^{-2\lambda |\xi|^{\frac{1}{s}} \log |\xi| } \]
		and it follows
		\begin{eqnarray*}
			\int_{|\xi|>N} e^{\lambda(|\xi|^{\frac{1}{s}} \log |\xi| )} \, d|\mu|(\xi) & = & \int_{|\xi|>N} e^{\lambda(|\xi|^{\frac{1}{s}} \log |\xi| )} |g(\xi)| \, d\xi \hspace{7cm} \\
				& \leq & \int_{|\xi|>N} e^{-\lambda(|\xi|^{\frac{1}{s}} \log |\xi|)} \, d\xi \\
				& < & \infty.
		\end{eqnarray*}
		This completes the proof.
	\end{proof}
	
\section{Application to Partial Differential Equations}
	
	\subsection{A First Result on the Wave Equation}
		In Section \ref{SecGMspace} we obtained some standard tools to treat non-linearities in partial differential equations considered in the frame of Gevrey-modulation spaces. However giving examples of the application of Gevrey-modulation spaces to partial differential equations is beyond the scope of this work. A remark on this is given in Section \ref{SecOpen}. Here we only formulate a first and very basic linear result on the homogeneous wave equation which was already explicitly shown in \cite{reich}. There we did not use the numerous advantages of the Gevrey frame but only classical weighted modulation spaces with weights of Sobolev type. \\
		We consider the initial value problem for the homogeneous wave equation for $(t,x)\in\R^{n+1}$ which is given by 
		\begin{equation}
			\partial_t^2 u(t,x) - \Delta u(t,x) = 0, \quad u(0,x)=f(x), \quad u_t(0,x) = g(x),
		\label{wave}
		\end{equation}
		where $\Delta$ denotes the Laplace-operator 
		\[ \Delta = \frac{\partial^2}{\partial x_1^2} + \frac{\partial^2}{\partial x_2^2} + \ldots + \frac{\partial^2}{\partial x_n^2}. \]
		Applying the Fourier transform we obtain the general solution of (\ref{wave}). It is given by
		\begin{equation} \label{genSol}
			 u(t,x) = \mathcal{F}^{-1}_2 \left(\hat{g}(\xi) \cdot \frac{\sin(|\xi|t)}{|\xi|}\right)(t,x) + \mathcal{F}^{-1}_2 (\hat{f}(\xi) \cdot \cos(|\xi|t))(t,x),
		\end{equation}
		where $\mathcal{F}^{-1}_2$ is the partial inverse Fourier transform in the $\xi$-variable. \\
		In order to determine the space which contains the solution $u=u(t,x)$  we need to introduce the following notation. The space $C([0,T], \mathring{M}^{p,q}_{s,\sigma}(\R^n))$ denotes the function space with the following properties:
		\begin{itemize}
			\item for all $t\in [0,T]$ it holds $u(t, \cdot)\in \mathring{M}^{p,q}_{s,\sigma}(\R^n)$,
			\item $\displaystyle \lim_{t_1\rightarrow t_2} \|u(t_1, \cdot)-u(t_2,\cdot) \|_{\mathring{M}^{p,q}_{s,\sigma}(\R^n)} = 0$ (that is continuity in $t$) and
			\item the norm is defined by $\displaystyle \max_{t\in [0,T]} \|u(t, \cdot)\|_{\mathring{M}^{p,q}_{s,\sigma}(\R^n)}$.
		\end{itemize}
		Moreover, $u\in C^n([0,T], \mathring{M}^{p,q}_{s,\sigma}(\R^n))$ if $\partial_t^l u \in C([0,T], \mathring{M}^{p,q}_{s,\sigma}(\R^n))$ for all $0\leq l\leq n$. Naturally this notation can be applied to every other space with respect to $t$ and $x$, respectively.
	\begin{prop} \label{ModIncCont}
		Assume $s,\sigma$ are real numbers such that $s\geq 0$. Then the modulation space $\mathring{M}^{\infty,1}_{s,\sigma}$ is contained in the space $C$ of all continuous functions, i.e. $\mathring{M}^{\infty,1}_{s,\sigma}(\R^n)\subset C(\R^n)$.
	\end{prop}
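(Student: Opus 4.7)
The plan is to prove continuity pointwise by means of the inversion formula (Theorem \ref{inversion}), applied with coinciding windows $\gamma=\phi\in\S(\R^n)\setminus\{0\}$, and then to invoke dominated convergence. Concretely, for $f\in\mathring{M}^{\infty,1}_{s,\sigma}(\R^n)$ the inversion formula (which, by the remark following Theorem \ref{inversion}, extends to $\S'(\R^n)$) gives, as tempered distributions,
\[
f(y) \;=\; \frac{(2\pi)^{-n/2}}{\|\phi\|_{L^2}^2}\iint_{\R^{2n}} V_\phi f(x,\xi)\, e^{\i\xi\cdot y}\,\phi(y-x)\,dx\,d\xi.
\]
My goal is to show that the right-hand side is a continuous function of $y$; since two distributions that coincide must agree a.e., this upgrades $f$ to a continuous function.

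First I would verify absolute convergence of the double integral at every fixed $y$. For this I estimate the inner $x$-integral using the defining norm of $\mathring{M}^{\infty,1}_{s,\sigma}$ together with Peetre's inequality $\langle x\rangle^{-\sigma}\leq 2^{|\sigma|/2}\langle y\rangle^{|\sigma|}\langle y-x\rangle^{-\sigma}$:
\[
\int_{\R^n}|V_\phi f(x,\xi)|\,|\phi(y-x)|\,dx
\leq \sup_{x\in\R^n}\!\bigl(|V_\phi f(x,\xi)|\langle x\rangle^{\sigma}\bigr)\int_{\R^n}\langle x\rangle^{-\sigma}|\phi(y-x)|\,dx,
\]
and the second factor is bounded by $C\,\langle y\rangle^{|\sigma|}$ since $\phi\in\S(\R^n)$. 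Integrating in $\xi$ and using $\langle\xi\rangle^{s}\geq 1$ (which requires $s\geq 0$, as assumed), the double integral is controlled by $C\,\langle y\rangle^{|\sigma|}\|f\|_{\mathring{M}^{\infty,1}_{s,\sigma}}$. Hence it converges absolutely for every $y$.

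Next I would establish continuity by dominated convergence. For a sequence $y_k\to y_0$ I restrict to $|y_k-y_0|\leq 1$ and define the dominating envelope $\Phi(x):=\sup_{|\tilde y-y_0|\leq 1}|\phi(\tilde y-x)|$. Because $\phi\in\S(\R^n)$, $\Phi$ decays faster than any polynomial (for $|x|$ large the distance $|\tilde y-x|$ is bounded below by $|x|/2$, for instance). Therefore the integrand obeys
\[
|V_\phi f(x,\xi)\,e^{\i\xi\cdot y_k}\,\phi(y_k-x)|\;\leq\;|V_\phi f(x,\xi)|\,\Phi(x),
\]
and by the same Peetre argument as above together with the rapid decay of $\Phi$, the right-hand side is integrable on $\R^{2n}$. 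Pointwise convergence of the integrand to $V_\phi f(x,\xi)e^{\i\xi\cdot y_0}\phi(y_0-x)$ is immediate from the continuity of $\phi$ and of the exponential. Dominated convergence then yields $f(y_k)\to f(y_0)$, proving $f\in C(\R^n)$.

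The only delicate point, and the place where I would spend the most care, is the treatment of the weight $\langle x\rangle^{\sigma}$ when $\sigma<0$: in that case the bare sup $\sup_x|V_\phi f(x,\xi)|$ need not be finite, and one is forced to absorb the growth in $x$ by exploiting Schwartz decay of $\phi$ through Peetre's inequality, which is precisely the mechanism sketched above. The positivity assumption $s\geq 0$ is used only to drop the $\xi$-weight when passing from the modulation norm to the integral $\int\sup_x(|V_\phi f|\langle x\rangle^{\sigma})\,d\xi$.
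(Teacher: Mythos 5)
The paper offers no proof of this proposition beyond a citation to Proposition 11 of \cite{reich}, so there is no in-paper argument to compare against; judged on its own terms, your proof is essentially correct and follows the standard route for showing $M^{\infty,1}$-type spaces consist of continuous functions, namely the inversion formula plus dominated convergence. The two estimates that carry the argument are both sound: Peetre's inequality absorbs a possibly negative $x$-weight $\sigma$ into the Schwartz decay of $\phi(y-x)$ at the cost of a harmless factor $\langle y\rangle^{|\sigma|}$, and $s\geq 0$ lets you insert $\langle\xi\rangle^{s}\geq 1$ to dominate the $\xi$-integral by the $\mathring{M}^{\infty,1}_{s,\sigma}$-norm. (Note also that the $L^\infty$-norm in $x$ appearing in the modulation norm is an essential supremum, but since $V_\phi f$ is continuous by Proposition \ref{STFTtemp} this coincides with the supremum you use.) The one step you should spell out more carefully is the identification of the $\S'$-valued inversion integral with your pointwise absolutely convergent integral: Corollary 11.2.7 in \cite{groechenig} gives the formula weakly, i.e.\ $\langle f,\psi\rangle = c\iint V_\phi f(x,\xi)\,\langle M_\xi T_x\phi,\psi\rangle\,dx\,d\xi$ for $\psi\in\S(\R^n)$, and to conclude that $f$ agrees a.e.\ with the continuous function $g(y)$ defined by your double integral you must apply Fubini to the triple integral in $(x,\xi,y)$. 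This is legitimate precisely because your bound $C\langle y\rangle^{|\sigma|}\|f\|_{\mathring{M}^{\infty,1}_{s,\sigma}}$ is polynomial in $y$ and hence integrable against $|\psi|$, but the sentence ``two distributions that coincide must agree a.e.'' glosses over this; as written it presupposes that the pointwise integral \emph{is} the distribution produced by the inversion formula, which is exactly what the Fubini step establishes. With that addition the proof is complete.
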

	\begin{proof}
		Cf. Proposition 11 in \cite{reich}.
	\end{proof}
	In this work we have established all tools that are needed to prove the following theorem. Thereby we can follow the argumentation given in \cite{reich}. 
	\begin{thm} \label{ModWaveResult}
		Assume that $N$ is an arbitrary integer and $s\in\R$ such that $s\geq 1$. If $f\in \mathring{M}^{p,1}_{s+1,N}(\R^n)$ and $g\in \mathring{M}^{p,1}_{s,N}(\R^n)$, where $p\in [1,\infty]$, then there exists a unique classical solution $u=u(t,x)$ of the Cauchy problem \eqref{wave} such that 
	\[ u \in C([0,T], \mathring{M}^{p,1}_{s+1,N}(\R^n))\bigcap C^1([0,T], \mathring{M}^{p,1}_{s,N}(\R^n))\bigcap C^2([0,T], \mathring{M}^{p,1}_{s-1,N}(\R^n)). \]
		Furthermore the a priori estimate 
	\begin{equation} \label{apriori}
		\|u(t,\cdot)\|_{\mathring{M}^{p,1}_{s+1,N}(\R^n)} \leq C_1(t) \| g \|_{\mathring{M}^{p,1}_{s+1,N}(\R^n)} + C_2(t) \| f \|_{\mathring{M}^{p,1}_{s+1,N}(\R^n)} 
	\end{equation}
	holds for some constants $C_1=C_1(t), C_2=C_2(t) >0$.
	\end{thm}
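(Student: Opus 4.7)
The strategy is to exploit the explicit Fourier representation \eqref{genSol} and reduce the problem to two Fourier-multiplier boundedness statements. I would write $u(t,x) = u_g(t,x) + u_f(t,x)$, where $u_g = \mathcal{F}_2^{-1}(\hat{g}(\xi)\,\sin(|\xi|t)/|\xi|)$ and $u_f = \mathcal{F}_2^{-1}(\hat{f}(\xi)\cos(|\xi|t))$. The weights of the two spaces from the hypothesis are exactly aligned with the symbols: $\cos(|\xi|t)$ is bounded so it preserves $\mathring{M}^{p,1}_{s+1,N}$, whereas $\sin(|\xi|t)/|\xi|$ gains one $\xi$-derivative (since $\langle\xi\rangle\cdot|\sin(|\xi|t)/|\xi||\leq \langle t\rangle$), so it maps $\mathring{M}^{p,1}_{s,N}$ into $\mathring{M}^{p,1}_{s+1,N}$. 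It thus suffices to establish, for each fixed $t\in[0,T]$, boundedness of these Fourier multipliers on modulation spaces, together with continuity of the map $t\mapsto m_j(t,D)$ as an operator-valued function.

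For the multiplier bounds I would pass to the STFT characterization of Theorem \ref{MpqLpq} together with the representation $V_\phi v(x,\xi)=e^{-\i x\cdot\xi}V_{\hat{\phi}}\hat{v}(\xi,-x)$ from \eqref{eq5}, so that Fourier multiplication by $m(t,\cdot)$ acts directly on the second slot of $V_{\hat{\phi}}\hat{v}$. Combining the pointwise estimates $|\cos(|\xi|t)|\leq 1$ and $|\sin(|\xi|t)/|\xi|| \leq \min(t,1/|\xi|)$ with the homeomorphism property of $\langle D\rangle^{s_0}$ from Proposition \ref{ModHomeo} reduces the question to bounded multipliers on $\mathring{M}^{p,1}_{0,N}$. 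Smoothness of the symbols for $\xi\neq 0$ (with the $\sin$-cancellation removing the apparent singularity at the origin) then yields the two multiplier inequalities, giving directly the a priori estimate \eqref{apriori} with explicit constants $C_1(t),C_2(t)$ depending polynomially on $t$.

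For the regularity in $t$, I would differentiate the symbols: $\partial_t$ turns $\cos(|\xi|t)$ into $-|\xi|\sin(|\xi|t)$ and $\sin(|\xi|t)/|\xi|$ into $\cos(|\xi|t)$, each losing one $\xi$-weight, which matches the scales $\mathring{M}^{p,1}_{s,N}$ and $\mathring{M}^{p,1}_{s-1,N}$ in the statement. Continuity in $t$ in each space follows by dominated convergence applied to the STFT envelope, using that $|V_\phi f|$ and $|V_\phi g|$ serve as $t$-independent integrable majorants. Once $u,\partial_t u,\partial_t^2 u$ and (via the ellipticity of the scale) $\Delta u$ all belong to continuous modulation-space scales, Proposition \ref{ModInclusion} embeds $\mathring{M}^{p,1}_{s,N}\hookrightarrow \mathring{M}^{\infty,1}_{s,N}$ and Proposition \ref{ModIncCont} gives continuous representatives, so $u$ is a classical solution. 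Uniqueness is immediate: for zero data, taking $\mathcal{F}_2$ turns \eqref{wave} into an ODE in $\xi$ with vanishing initial conditions, forcing $\hat{u}(t,\cdot)\equiv 0$ in $\mathcal{S}'$.

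The main obstacle I anticipate is the interaction of the polynomial $x$-weight $\langle x\rangle^N$ with the Fourier multipliers, since multiplication by $\langle x\rangle^N$ corresponds on the frequency side to $\langle \i\nabla_\xi\rangle^N$ and does not commute with $m_j(t,\xi)$. Controlling this weight therefore requires estimating $\partial_\xi^\alpha m_j(t,\xi)$ for $|\alpha|\leq N$; each derivative contributes extra factors of $t$ and $|\xi|$. Because $N$ is a fixed integer, $s\geq 1$, and both symbols are smooth away from the origin (with $\sin(|\xi|t)/|\xi|$ extending smoothly through $\xi=0$), this differentiation remains admissible and merely inflates the time-dependent constants $C_1(t),C_2(t)$ by polynomial factors in $t$, which is exactly the form of the a priori estimate \eqref{apriori} claimed in the theorem.
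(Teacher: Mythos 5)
Your proposal is correct and follows essentially the same route the paper intends: the paper's own proof is only a citation of Theorem 7.2 in \cite{reich}, but the displayed solution formula \eqref{genSol} and Proposition \ref{ModIncCont} make clear that the intended argument is exactly your reduction to boundedness of the Fourier multipliers $\cos(|\xi|t)$ and $\sin(|\xi|t)/|\xi|$ on the weighted modulation spaces (via Bernstein-type block estimates and the homeomorphisms of Proposition \ref{ModHomeo}), followed by the embedding into continuous functions and Fourier-side uniqueness. One cosmetic remark: your argument naturally yields \eqref{apriori} with $\|g\|_{\mathring{M}^{p,1}_{s,N}}$ on the right-hand side, which is the natural (and stronger) form of the estimate given the hypothesis $g\in \mathring{M}^{p,1}_{s,N}(\R^n)$.
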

	\begin{proof}
		Cf. Theorem 7.2 in \cite{reich}.
	\end{proof}
	\begin{rem}
		Obviously the solution $u$ does not loose regularity, that is, for the given initial data $f$ which is contained in the modulation space $\mathring{M}^{p,1}_{s+1,N}$ the solution $u$ also belongs to $\mathring{M}^{p,1}_{s+1,N}$ for every $t\in [0,T]$. \\
		Moreover, this result is independent of the dimension of the physical space, that is the dimension of the $x$-variable. 
		The condition $q=1$ for the modulation space $\mathring{M}^{p,q}_{s+1,N}$ ensures the existence of a classical solution together with the condition $s\geq 1$. If at least one of these conditions are violated, then we still obtain a solution, but not in the classical sense anymore. 
	\end{rem}
	We can easily show that there exists a result for standard modulation spaces, that is for unweighted modulation spaces $\mathring{M}^{p,q}$. 
	\begin{cor}
		Let $f,g\in \mathring{M}^{p,q}(\R^n)$, where $p,q \in [1,\infty]$. Then the Cauchy problem \eqref{wave} has a solution $u \in C([0,T], \mathring{M}^{p,q}(\R^n))$. \\
		Furthermore it holds the a priori estimate \eqref{apriori} but without weights, i.e., $s=\sigma=0$. 
	\end{cor}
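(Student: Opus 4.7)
The plan is to read the explicit Fourier representation \eqref{genSol} as the action of two time-dependent Fourier multipliers with symbols $m_{1,t}(\xi)=\cos(|\xi|t)$ and $m_{2,t}(\xi)=\sin(|\xi|t)/|\xi|$ on the data $f$ and $g$, and to show that each of these multipliers maps $\mathring{M}^{p,q}(\R^n)$ continuously into itself with operator norm of the form $C(t)$. Both symbols are real-analytic in $\xi$ on all of $\R^n$ (the apparent singularity of $m_{2,t}$ at the origin is removable because $\sin(r)/r$ is entire), so no low-frequency obstruction arises and everything reduces to a uniform multiplier estimate on the modulation scale.

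The natural tool is the equivalent frequency-uniform decomposition norm supplied by Proposition \ref{normequivalence}, combined with Bernstein's multiplier estimate (Lemma \ref{bernstein}). For each fixed $k\in\Z^n$ I would introduce a fattened cutoff $\tilde\sigma_k$ that equals $1$ on $\supp\sigma_k\subset Q_k$ and is supported in the union of at most $\mathcal{O}(1)$ neighboring cubes, and rewrite
\[ \Box_k \mathcal{F}^{-1}(m_{j,t}\hat h)=\mathcal{F}^{-1}\bigl(\sigma_k m_{j,t}\cdot\mathcal{F}(\tilde\Box_k h)\bigr), \]
where $\tilde\Box_k h=\mathcal{F}^{-1}(\tilde\sigma_k\hat h)$ decomposes as a bounded sum of $\Box_l h$ with $|l-k|_\infty\le 2$. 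Lemma \ref{bernstein} then yields
\[ \|\Box_k u(t,\cdot)\|_{L^p}\le C\bigl(\|\sigma_k m_{1,t}\|_{H^s}\|\tilde\Box_k f\|_{L^p}+\|\sigma_k m_{2,t}\|_{H^s}\|\tilde\Box_k g\|_{L^p}\bigr) \]
for any fixed $s>n/2$. A direct symbol computation by the chain rule, writing $\sigma_k(\xi)=\sigma_0(\xi-k)$ and using $|\xi|\ge|k|-\sqrt n$ on $\supp\sigma_k$, shows that $\|\sigma_k m_{j,t}\|_{H^s}\le C(1+t)^s$ uniformly in $k$; the low-frequency cubes near the origin are handled by the real-analytic Taylor expansion of $m_{j,t}$. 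Taking the $\ell^q$-norm over $k$ and absorbing the $\mathcal{O}(1)$-many shift overlaps exactly as in the proof of Proposition \ref{normequivalence} yields
\[ \|u(t,\cdot)\|_{\mathring{M}^{p,q}}\le C_1(t)\|f\|_{\mathring{M}^{p,q}}+C_2(t)\|g\|_{\mathring{M}^{p,q}}, \]
which is the a priori estimate \eqref{apriori} with $s=\sigma=0$ and immediately gives $u(t,\cdot)\in\mathring{M}^{p,q}(\R^n)$ for every $t\in[0,T]$.

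For continuity of $t\mapsto u(t,\cdot)$ I would apply the same scheme to $u(t_1,\cdot)-u(t_2,\cdot)$ with symbols $m_{j,t_1}-m_{j,t_2}$. Since $\partial_t m_{1,t}(\xi)=-|\xi|\sin(|\xi|t)$ and $\partial_t m_{2,t}(\xi)=\cos(|\xi|t)$, the mean value theorem together with the Leibniz computation gives $\|\sigma_k(m_{j,t_1}-m_{j,t_2})\|_{H^s}\le C(T)(1+|k|)|t_1-t_2|$, and interpolating with the uniform bound from the previous paragraph produces
\[ \|\Box_k(u(t_1,\cdot)-u(t_2,\cdot))\|_{L^p}\le C(T)\min\bigl(1,(1+|k|)|t_1-t_2|\bigr)\bigl(\|\tilde\Box_k f\|_{L^p}+\|\tilde\Box_k g\|_{L^p}\bigr), \]
which tends to zero in the $\ell^q$-sense as $t_1\to t_2$ by dominated convergence whenever $p,q\in[1,\infty)$. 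The limit cases $p=\infty$ and/or $q=\infty$, where $\mathcal{S}$ fails to be dense (Proposition \ref{ModInclusion}), must either be restricted to the norm closure of $\mathcal{S}$ or interpreted in the weak-$*$ sense, as is customary for $L^\infty$-type PDE settings.

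The main technical point is precisely the uniform-in-$k$ $H^s$-symbol bound: in the weighted setting of Theorem \ref{ModWaveResult} a factor $(1+|k|)$ would be happily absorbed by a weight $\langle k\rangle^{s+1}$, but in the unweighted corollary one cannot afford any $|k|$-growth in the operator estimate, and one must instead exploit that the derivatives $\partial_\xi^\alpha m_{j,t}$ on a cube of diameter $\mathcal{O}(1)$ around $k$ are controlled purely by powers of $t$ as soon as $|k|\ge 1$. Once that symbol estimate is secured, the rest of the proof reproduces the argument scheme of Theorem \ref{ModWaveResult} from \cite{reich} with all weight bookkeeping stripped away.
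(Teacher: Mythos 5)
Your proposal is essentially correct, and it supplies an actual argument where the paper does not: the paper's ``proof'' of this corollary is a single sentence deferring to the investigations in \cite{reich} that yield Theorem \ref{ModWaveResult}. Your route --- reading \eqref{genSol} as the action of the multipliers $\cos(|\xi|t)$ and $\sin(|\xi|t)/|\xi|$, localizing with the frequency-uniform decomposition, invoking Lemma \ref{bernstein} on each unit cube, and exploiting that $\partial_\xi^\alpha|\xi|=O(|\xi|^{1-|\alpha|})$ away from the origin while both symbols are entire functions of $|\xi|^2$ near it --- is the standard mechanism behind boundedness of such wave multipliers on modulation spaces, and it is surely what underlies the cited thesis as well; the key observation that the uniform-in-$k$ bound $\|\sigma_k m_{j,t}\|_{H^s}\le C(1+t)^s$ costs no $|k|$-growth (so that no weight is needed) is exactly the point that makes the unweighted statement work, and you have identified it correctly.

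One remark on the endpoints. Your continuity argument via $\min\bigl(1,(1+|k|)|t_1-t_2|\bigr)$ and dominated convergence genuinely requires $q<\infty$ (and $p<\infty$ for the density-based variants), and you are right that for $q=\infty$ strong continuity of $t\mapsto u(t,\cdot)$ fails in general: for data whose block norms $\|\Box_k f\|_{L^p}$ do not decay, one has $\sup_{|k|\gtrsim |t_1-t_2|^{-1}}\|\Box_k(u(t_1)-u(t_2))\|_{L^p}$ bounded below, exactly as translation fails to be strongly continuous on $L^\infty$. The corollary as stated claims $u\in C([0,T],\mathring{M}^{p,q})$ for all $p,q\in[1,\infty]$ without this caveat, so your proof is in fact more careful than the statement it is proving; flagging the weak-$*$ reinterpretation at the endpoints is the honest resolution rather than a gap in your argument.
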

	\begin{proof}
		This result immediately follows from the investigations in \cite{reich} which yield Theorem \ref{ModWaveResult}.
	\end{proof}
	\begin{rem}
		Obviously we can neither say something about uniqueness nor about regularity of the derivatives of the solution with respect to $t$.  
	\end{rem}
	
\section{Open Problems and Concluding Remarks} \label{SecOpen}
	
	Due to Section \ref{SecGMspace} we are now able to investigate partial differential equations with non-linearities. Let $L$ be an admissible differential operator. Then it is reasonable to consider problems of the form 
	\begin{equation} \label{PDE} 
		L(u) = f(u), 
	\end{equation}
	where $u\in \mathcal{GM}^{p,q}_s$ and $f$ is an appropriate function. That is we have a non-linear source term. If $f$ is analytic, then Taylor's expansion formula together with Theorem \ref{algebra} give $f(u) \in \mathcal{GM}^{p,q}_s$. Assume $f$ to be as in Theorem \ref{Superposition}, in particular $f$ is non-analytic. Then Theorem \ref{Superposition} yields $f(u) \in \mathcal{GM}^{p,q}_s$. Now we can find methods to solve certain problems of the form \eqref{PDE}. In fact this will be of interest for future work. \\
	In Section \ref{SecGMspace} we obtained convenient statements for Gevrey-modulation spaces. It is natural to ask whether we can reach similar results for classical modulation spaces defined in Definition \ref{modCont}. By following the proof of Theorem \ref{algebra} and taking weights of Sobolev type we can actually prove
		\begin{equation} \label{algMod}
			M^{2p,q}_s \cdot M^{2p,q}_s \subset M^{p,q}_s, 
		\end{equation}
		if $q>1$ and $s>\frac{n}{q'}$, where $q'$ is the conjugated exponent of $q$. In particular under these conditions the modulation space $M^{p,q}_s$ is an algebra under multiplication. For $q=1$ Theorem \ref{multiplication} yields
		\[ M^{p,1} \cdot M^{p,1} \subset M^{p,1}. \] 
		There are comprehensible reasons to state the conjecture that the condition on $s$ in \eqref{algMod} is sharp. This will be proved in future work as well. \\
		Summarizing it seems that we can generally expect some good and fruitful results for the application of modulation spaces to partial differential equations. This fact will mainly motivate our future work on that field.

	\newpage

\end{document}